\documentclass[11pt]{article}

\usepackage{amsmath, amssymb}
\usepackage{authblk}
\usepackage{geometry}
\usepackage{hyperref}
\usepackage{graphicx}
\usepackage{epstopdf}
\usepackage{mathrsfs}
\usepackage{amsfonts}
\usepackage{amscd,amsthm,bm,psfrag}
\usepackage[numbers,sort&compress]{natbib}

\geometry{margin=1in}

\setlength{\baselineskip}{16.5pt}
\setlength{\evensidemargin}{-4cm} \setlength{\oddsidemargin}{1mm}
\setlength{\textwidth}{16cm} \setlength{\textheight}{22cm}
\setlength{\headsep}{1.4mm}

\makeatletter
\renewcommand{\@seccntformat}[1]{{\csname the#1\endcsname}{\normalsize.}\hspace{.5em}}

\makeatother

\numberwithin{equation}{section}

\def \[{\begin{equation}}
\def \]{\end{equation}}

\newtheorem{thm}{Theorem}[section]

\newtheorem{lem}[thm]{Lemma}
\newtheorem{cor}[thm]{Corollary}
\newtheorem{prop}[thm]{Proposition}

\newtheorem{cla}{Claim}

\newtheorem*{thm*}{Theorem}
\newtheorem*{prop*}{Proposition}

\newenvironment{kst}
{\setlength{\leftmargini}{1.3\parindent}
 \begin{itemize}
 \setlength{\itemsep}{-1.1mm}}
{\end{itemize}}

\begin{document}
\title{On Tur\'{a}n problems for Berge forests
}

\author[a,b]{Junpeng Zhou\,}
\author[c,*]{D\'{a}niel Gerbner\,}
\author[a,b]{Xiying Yuan\,}

\affil[a]{\small \,Department of Mathematics, Shanghai University, Shanghai 200444, PR China}
\affil[b]{\small \,Newtouch Center for Mathematics of Shanghai University, Shanghai 200444, PR China}
\affil[c]{\small \,Alfr\'ed R\'enyi Institute of Mathematics, HUN-REN}

\date{}

\maketitle

\footnotetext{*\textit{Corresponding author}. Email addresses: \texttt{junpengzhou@shu.edu.cn} (J. Zhou), \texttt{gerbner.daniel@renyi.hu} (D. Gerbner), \texttt{xiyingyuan@shu.edu.cn} (X. Yuan).}




\begin{abstract}
For a graph $F$, an $r$-uniform hypergraph $H$ is a Berge-$F$ if there is a bijection $\phi:E(F)\rightarrow E(H)$ such that $e\subseteq \phi(e)$ for each $e\in E(F)$. Given a family $\mathcal{F}$ of $r$-uniform hypergraphs, an $r$-uniform hypergraph is $\mathcal{F}$-free if it does not contain any member in $\mathcal{F}$ as a subhypergraph. The Tur\'{a}n number 
of $\mathcal{F}$ is the maximum number of hyperedges in an $\mathcal{F}$-free $r$-uniform hypergraph on $n$ vertices. 
In this paper, some exact and general results on the Tur\'{a}n numbers for several types of Berge forests are obtained. 
\end{abstract}

{\noindent{\bf Keywords}: Tur\'{a}n number, Berge hypergraph, forest}

{\noindent{\bf AMS subject classifications:} 05C35, 05C65}

\section{\normalsize Introduction} 
A \textit{hypergraph} $H=(V(H),E(H))$ consists of a vertex set $V(H)$ and a hyperedge set $E(H)$, where each hyperedge in $E(H)$ is a nonempty subset of $V(H)$. If $|e|=r$ for every $e\in E(H)$, then $H$ is called an \textit{$r$-uniform hypergraph} ($r$-graph for short). For simplicity, let $e(H):=|E(H)|$. The \textit{degree} $d_H(v)$ of a vertex $v$ is the number of hyperedges containing $v$ in $H$.

Let $\mathcal{F}$ be a family of $r$-graphs. An $r$-graph $H$ is called \textit{$\mathcal{F}$-free} if $H$ does not contain any member in $\mathcal{F}$ as a subhypergraph. The \textit{Tur\'{a}n number} ${\rm{ex}}_r(n,\mathcal{F})$ of $\mathcal{F}$ is the maximum number of hyperedges in an $\mathcal{F}$-free $r$-graph on $n$ vertices. If $\mathcal{F}=\{G\}$, then we write ${\rm{ex}}_r(n,G)$ instead of ${\rm{ex}}_r(n,\{G\})$. When $r=2$, we write $\mathrm{ex}(n,\mathcal{F})$ instead of $\mathrm{ex}_2(n,\mathcal{F})$.

Let $F$ be a graph. An $r$-graph $H$ is a \textit{Berge-$F$} if there is a bijection $\phi: E(F)\rightarrow E(H)$ such that $e\subseteq \phi(e)$ for each $e\in E(F)$. The graph $F$ is called a \textit{skeleton} of $H$.
Note that the word \textit{core} is also used in the literature. For a fixed graph $F$ there are many hypergraphs that are a Berge-$F$. For convenience, we refer to this collection of hypergraphs as ``Berge-$F$''. 
Berge \cite{A2} defined the Berge cycle, and Gy\H{o}ri, Katona and Lemons \cite{B6} defined the Berge path. Later, Gerbner and Palmer \cite{B3} generalized the established concepts of Berge cycle and Berge path to general graphs.

Hypergraph Tur\'{a}n problems are the central topics of extremal combinatorics. In particular, Tur\'{a}n problems on Berge hypergraphs have been extensively studied, yielding numerous related extremal results. 
Gy\H{o}ri \cite{Gyo} showed that for $r=3,4$, an $n$-vertex Berge triangle-free $r$-graph has at most $\lfloor n^2/8(r-2)\rfloor$ hyperedges if $n$ is large enough, and this bound is sharp. Gy\H{o}ri and Lemons \cite{B2} showed that the Tur\'{a}n numbers of Berge-$C_{2k}$ and Berge-$C_{2k+1}$ have an order of magnitude of $O(n^{1+1/k})$. 
Gy\H{o}ri, Katona and Lemons \cite{B6} generalized the Erd\H{o}s-Gallai theorem to Berge paths. Specifically, they determined ${\rm{ex}}_r(n,{\rm Berge}{\text -}P_\ell)$ for the cases when $\ell>r+1>3$ and $r\geq\ell>2$. The case when $\ell=r+1>2$ was settled by Davoodi, Gy\H{o}ri, Methuku and Tompkins \cite{10-10A1}. 
\begin{thm}[Gy\H{o}ri, Katona and Lemons \cite{B6}, Davoodi, Gy\H{o}ri, Methuku and Tompkins \cite{10-10A1}]\label{lem2.2}
\ \ 
\begin{kst}
\item[\textbf{(i)}] If $\ell\geq r+1>3$, then ${\rm{ex}}_r(n,{\rm Berge}{\text -}P_\ell)\leq \frac{n}{\ell}\binom{\ell}{r}$. Furthermore, this bound is sharp whenever $\ell$ divides $n$.
\item[\textbf{(ii)}] If $r\geq\ell>2$, then ${\rm{ex}}_r(n,{\rm Berge}{\text -}P_\ell)\leq \frac{n(\ell-1)}{r+1}$. Furthermore, this bound is sharp whenever $r+1$ divides $n$.
\end{kst}
\end{thm}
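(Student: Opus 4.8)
The statement splits naturally into the two ranges $\ell\ge r+1$ and $r\ge\ell$, and in each the sharpness part is the easy half. For part (i) take $n/\ell$ vertex-disjoint copies of the complete $r$-graph on $\ell$ vertices: each copy has only $\ell$ vertices while the skeleton $P_\ell$ has $\ell+1$, so no copy — and hence no subhypergraph — is a Berge-$P_\ell$, and the total number of hyperedges is $\frac n\ell\binom\ell r$. For part (ii) partition the $n$ vertices into $n/(r+1)$ blocks of size $r+1$ and, on each block, choose any $\ell-1$ distinct $r$-subsets as hyperedges (possible since $\ell-1\le r+1=\binom{r+1}{r}$); a Berge-$P_\ell$ needs $\ell$ hyperedges, so none fits inside a block, and since the blocks are disjoint none fits across blocks either, giving $\frac{(\ell-1)n}{r+1}$ hyperedges. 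So it remains to prove the two upper bounds.

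For both upper bounds the plan is induction on $n$. Splitting $H$ into connected components and summing reduces the problem to $H$ connected, because a component $C$ with few vertices can be bounded directly — if $v(C)\le\ell$ it is automatically Berge-$P_\ell$-free and $e(C)\le\binom{v(C)}{r}$, which is at most the claimed bound since $m\mapsto\binom m r/m$ is nondecreasing (for part (ii) an even cruder estimate suffices, as $\ell\le r$). Assume then $H$ is connected with more than the threshold number of vertices, and fix a longest Berge path $Q$ in $H$, say with core $v_0v_1\cdots v_s$ and witnessing hyperedges $f_1,\dots,f_s$, where necessarily $s\le\ell-1$. The goal is to exhibit a vertex set $S$ — of size at most $\ell$ in case (i), of size at most $r+1$ in case (ii) — such that at most $\binom\ell r$ (respectively at most $\ell-1$) hyperedges of $H$ meet $S$; deleting $S$ and invoking the inductive hypothesis on $H-S$ then yields the bound, with equality propagated exactly, which is what makes the constructions above sharp when $\ell\mid n$, respectively $(r+1)\mid n$.

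To locate $S$ one uses the maximality of $Q$. A hyperedge through the endpoint $v_0$ (or through $v_s$) that is not one of the $f_i$ cannot contain a vertex outside the core, since prepending (appending) it would produce a longer Berge path; moreover Pósa-type rotations along $Q$ — reversing an initial segment of the core whenever $v_0$ belongs to a witnessing hyperedge of an interior edge — enlarge the set of admissible endpoints while leaving both the core vertex set and the set of witnessing hyperedges unchanged, so the same restriction applies to every rotation-reachable endpoint. This is what pins down $S$, and here the two regimes genuinely diverge: when $\ell\ge r+1$ the core may be as large as a whole extremal clique, and $S$ is taken to be the core (enlarged, if $s<\ell-1$, by a few further vertices joined to it through hyperedges); when $r\ge\ell$ the core has at most $\ell\le r$ vertices and therefore fits inside a single hyperedge, which forces the witnessing hyperedges and their neighbourhoods to cluster tightly, and one takes $S$ to be the vertex set of a suitably chosen hyperedge together with the core.

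The crux — and the step I expect to be the real obstacle in both parts — is proving that $S$ as chosen is met by at most the budgeted number of hyperedges. The difficulty is twofold: the witnessing hyperedges $f_i$ may protrude far out of the core, and an interior core vertex $v_j$ (unlike a rotation-reachable endpoint) carries no a priori restriction at all, so the naive count of hyperedges incident to $S$ overshoots the budget. Overcoming this requires a careful extremality/rotation analysis showing that such protruding or interior incidences can be rerouted into longer Berge paths unless they are few in number, together with bookkeeping tight enough to land on exactly $\binom\ell r$ (respectively $\ell-1$). Once this separation lemma is established the induction closes immediately, completing both parts.
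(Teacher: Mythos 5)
This statement is quoted in the paper from \cite{B6} and \cite{10-10A1}; the paper contains no proof of it, so there is nothing in-paper to compare your argument against, and it has to be judged on its own. Your two sharpness constructions are correct and are the standard ones (disjoint copies of $K_\ell^{(r)}$ for (i); disjoint $(r+1)$-blocks each carrying $\ell-1$ hyperedges for (ii), where the connectivity of the skeleton path confines any Berge-$P_\ell$ to a single block). Your outline of the upper bound --- induction on $n$, reduction to connected components, a longest Berge path, the observation that a non-witnessing hyperedge through an endpoint cannot leave the core, and P\'osa-type rotations preserving the core and the witness set --- is also the right general shape and matches the strategy of the cited papers.

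However, there is a genuine gap, and you name it yourself: the ``separation lemma'' asserting that the chosen set $S$ meets at most $\binom{\ell}{r}$ (resp.\ $\ell-1$) hyperedges is not proved, only described as the expected obstacle. That lemma is the entire content of the theorem --- it is where all the work in \cite{B6} lies, and the boundary case $\ell=r+1$ resisted that argument entirely and required the separate paper \cite{10-10A1} --- so a proposal that defers it has not proved the statement. There is also a quantitative slip in how you set up the induction: if you delete a set $S$ with $|S|\le \ell$ together with at most $\binom{\ell}{r}$ hyperedges, the inductive hypothesis gives $e(H)\le \binom{\ell}{r}+\frac{n-|S|}{\ell}\binom{\ell}{r}$, which exceeds $\frac{n}{\ell}\binom{\ell}{r}$ unless $|S|=\ell$ exactly (and similarly $|S|=r+1$ in case (ii)); so the budget you must establish is really ``at most $\frac{|S|}{\ell}\binom{\ell}{r}$ hyperedges meeting $S$,'' which is a strictly harder bookkeeping target than the one you state, precisely in the regime where the longest Berge path is short and $S$ would naturally be smaller than $\ell$.
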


Analogous to the graph case, the connected version of this problem is also studied. A hypergraph $H$ is said to be \textit{connected} if for every two vertices there is a Berge path containing both of them. Let $\mathcal{F}$ be a family of $r$-graphs. Denote by ${\rm{ex}}^{\rm con}_r(n,\mathcal{F})$ the maximum number of hyperedges in an $n$-vertex connected $\mathcal{F}$-free $r$-graph. Gy\H{o}ri, Methuku, Salia, Tompkins and Vizer \cite{GMSTV} determined the asymptotics of ${\rm{ex}}^{\rm con}_r(n,{\rm Berge}{\text -}P_\ell)$. Later, F\"{u}redi, Kostochka and Luo \cite{FKL2} determined ${\rm{ex}}^{\rm con}_r(n,{\rm Berge}{\text -}P_\ell)$ for sufficiently large $n$ and $\ell\geq 4r \geq 12$. Independently, Gy\H{o}ri, Salia and Zamora \cite{GSZ} determined ${\rm{ex}}^{\rm con}_r(n,{\rm Berge}{\text -}P_\ell)$ for sufficiently large $n$ and $\ell\geq 2r+13 \geq 18$. 

\begin{thm}[Gy\H{o}ri, Salia and Zamora \cite{GSZ}]\label{lem4.3}
For all integers $n$, $\ell$ and $r$ there exists an $N_{\ell,r}$ such that for $n > N_{\ell,r}$ and $\ell \geq 2r + 13 \geq 18$,
\begin{eqnarray*}
{\rm{ex}}^{\rm con}_r(n,{\rm Berge}{\text -}P_\ell)=
\binom{\frac{\ell-1}{2}}{r-1}\left( n - \left\lfloor \frac{\ell-1}{2} \right\rfloor \right)
+ \binom{\left\lfloor \frac{\ell-1}{2} \right\rfloor}{r}
+ \mathbf{1}_{2 \mid \ell} \binom{\left\lfloor \frac{\ell-1}{2} \right\rfloor}{r-2},
\end{eqnarray*}
where $\mathbf{1}_{2 \mid \ell}=1$ if $2 \mid \ell$, and $\mathbf{1}_{2 \mid \ell}=0$ otherwise. 
\end{thm}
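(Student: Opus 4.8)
The plan is to prove the lower and upper bounds separately; the lower bound comes from an explicit construction and the upper bound from a block decomposition followed by a rotation argument on longest Berge paths. Write $t=\lfloor(\ell-1)/2\rfloor$, and recall that a Berge-$P_\ell$ consists of $\ell$ distinct hyperedges realizing a path with $\ell+1$ core vertices, so that an $r$-graph is Berge-$P_\ell$-free precisely when every one of its Berge paths has at most $\ell$ core vertices. For the lower bound, take the $r$-graph $\mathcal H$ on a vertex set $A\cup B$ with $|A|=t$ and $|B|=n-t$ whose hyperedges are (i) every $r$-subset of $A$; (ii) every set $S\cup\{b\}$ with $S\in\binom{A}{r-1}$ and $b\in B$; and, only when $\ell$ is even, (iii) every set $T\cup\{b_1,b_2\}$ with $T\in\binom{A}{r-2}$, for two fixed vertices $b_1,b_2\in B$. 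Then $\mathcal H$ is connected and $e(\mathcal H)=\binom{t}{r-1}(n-t)+\binom{t}{r}+\mathbf{1}_{2\mid\ell}\binom{t}{r-2}$. It is Berge-$P_\ell$-free because every hyperedge of $\mathcal H$ contains at most one vertex of $B$, the only exception being that a type (iii) hyperedge contains exactly the pair $\{b_1,b_2\}$; hence along any Berge path two consecutive core vertices lie in $B$ only for that single pair, so the core vertices in $B$ are separated by core vertices of $A$ with at most one exception, and since $|A|=t$ this caps the number of core vertices at $2t+1=\ell$ when $\ell$ is odd and at $2t+2=\ell$ when $\ell$ is even.

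For the upper bound, let $H$ be a connected, $n$-vertex, $r$-uniform, Berge-$P_\ell$-free hypergraph with $n>N_{\ell,r}$. Following the template of the connected Erd\H{o}s--Gallai theorem, I would first reduce to the case that $H$ has no cut vertex. If $v$ is a cut vertex, then every hyperedge through $v$ has its remaining $r-1$ vertices inside a single component of $H-v$ (otherwise those components were not separated by $v$), so $H$ breaks into connected Berge-$P_\ell$-free pieces glued along $v$; each piece obeys a bound of the form $\binom{t}{r-1}|V|-c$ with a fixed positive slack $c=c_{\ell,r}$ arising from the inequality $\binom{t}{r-1}t>\binom{t}{r}+\binom{t}{r-2}$, and since this slack exceeds $\binom{t}{r-1}$ the overlap at $v$ forces the sum of the piece bounds strictly below the target. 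The delicate point is that a piece may be too small for an inductive hypothesis; one controls this by bounding how many and how large such small blobs can be (a hyperedge through a cut vertex meets only $r-1$ components, and Theorem~\ref{lem2.2} caps their edge counts), and this bookkeeping, routine in spirit, is where I expect to spend real effort.

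The substance of the upper bound is the case with no cut vertex, for which the key is a structure lemma: there is a set $A\subseteq V(H)$ with $|A|\le t$ such that all but $O_{\ell,r}(1)$ hyperedges of $H$ meet $A$ in at least $r-1$ vertices. I would prove it via a Berge analogue of P\'osa's rotation lemma. Fix a longest Berge path $\mathcal P=v_0,e_1,v_1,\dots,e_k,v_k$ with $k\le\ell-1$; $2$-connectivity together with $n>N_{\ell,r}$ gives the room to rotate at an endpoint using hyperedges not already on $\mathcal P$, and by maximality every endpoint so produced has all of its new Berge-neighbours on $\mathcal P$, so a P\'osa-type inequality yields a pivot set of size at most $\lceil k/2\rceil$ meeting all but $O_{\ell,r}(1)$ hyperedges in at least $r-1$ vertices (for even $\ell$ one must still push $\lceil k/2\rceil$ down to $\ell/2-1$, exploiting that $\mathcal P$ has an edge to spare since $k\le\ell-1<\ell$). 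Granting the lemma, the count is short: hyperedges inside $A$ number at most $\binom{t}{r}$, and those meeting $A$ in exactly $r-1$ vertices number at most $\binom{t}{r-1}(n-t)$, each being determined by its $(r-1)$-subset of $A$ and its single outside vertex; if $|A|\le t-1$ there is already $\Theta(n)$ slack. In the tight case $|A|=t$, one must show the ``bad'' hyperedges — those meeting $A$ in at most $r-2$ vertices — contribute at most $\mathbf{1}_{2\mid\ell}\binom{t}{r-2}$: since the alternating Berge path running through all of $A$ already uses $2t$ edges, namely $\ell-2$ when $\ell$ is even and $\ell-1$ when $\ell$ is odd, one ``outside adjacency'' supplied by a bad hyperedge can be appended harmlessly when $\ell$ is even but is fatal when $\ell$ is odd, while a second outside adjacency — or a bad hyperedge spanning three or more outside vertices used alongside another — would complete a Berge-$P_\ell$; pushing this through pins the bad hyperedges onto a single outside pair and bounds their number by $\binom{t}{r-2}$.

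The main obstacle is the structure lemma: getting the \emph{exact} bound $|A|\le\lfloor(\ell-1)/2\rfloor$ (a naive rotation argument gives only $\lceil(\ell-1)/2\rceil$), and then nailing the lower-order term $\mathbf{1}_{2\mid\ell}\binom{t}{r-2}$, which is precisely the additive constant that separates the even and odd cases and therefore cannot be absorbed into error terms. The hypotheses $\ell\ge2r+13\ge18$ are used to make $t$ large enough that the binomial inequalities above hold (so that the slack $c$ is positive and exceeds $\binom{t}{r-1}$, and the rotation argument has enough room), and $n>N_{\ell,r}$ is needed because for small $n$ the displayed bound is superseded by blob-like constructions resembling $K_\ell^{(r)}$; ruling those out for large $n$ is also what lets the block-decomposition reduction get started.
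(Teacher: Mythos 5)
This statement is not proved in the paper at all: Theorem \ref{lem4.3} is quoted from Gy\H{o}ri, Salia and Zamora \cite{GSZ} and used as a black box (in the proofs of Proposition \ref{prop4.1} and Theorem \ref{thmnew14}). So there is no in-paper proof to compare yours against, and I can only judge the sketch on its own terms. Your lower-bound construction is correct and is the standard extremal example: the parity count of core vertices (no two consecutive core vertices in $B$, with at most one exceptional pair $\{b_1,b_2\}$ supplied by the type (iii) hyperedges) does cap every Berge path at $\ell$ core vertices, and the edge count matches the stated formula with $t=\lfloor(\ell-1)/2\rfloor$.

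The upper bound, however, is a plan rather than a proof, and the plan concedes its own central gap. The entire content of the theorem is the exact structure lemma you state but do not prove: that a $2$-connected Berge-$P_\ell$-free $r$-graph has a set $A$ with $|A|\le\lfloor(\ell-1)/2\rfloor$ meeting all but $O_{\ell,r}(1)$ hyperedges in at least $r-1$ vertices, together with the pinning of the exceptional hyperedges onto a single outside pair. You yourself note that a naive P\'osa-type rotation only gives $\lceil(\ell-1)/2\rceil$, which is off by one exactly when $\ell$ is even, i.e.\ exactly in the case the term $\mathbf{1}_{2\mid\ell}\binom{t}{r-2}$ must be isolated; ``pushing this through'' is not a step that can be waved at, since the result is exact and this is where \cite{GSZ} spend essentially all of their effort. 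Rotation for Berge paths is also genuinely harder than for graphs, because the hyperedges witnessing a rotated path must remain distinct from those already used, and your sketch does not address how this is maintained. Finally, the reduction to the $2$-connected case via a block decomposition needs more care than indicated: the pieces glued at a cut vertex carry truncated hyperedges of size $r-1$, the additive slack $c_{\ell,r}$ must be verified to exceed the overlap cost for \emph{every} admissible piece size (including small pieces, where the displayed formula is false and clique-like bounds take over), and none of this bookkeeping is carried out. As it stands the upper bound is unproven.
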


Furthermore, Gerbner, Nagy,  Patk\'os, Salia and Vizer \cite{GNPSV} proved a stability version of the above connected result. 
Results on the Turán numbers of Berge copies of graphs other than cycles can be found e.g. in \cite{B3,B5,ZYW,ZYC,B4,GGNPXZ,Ge2}. For a short survey on Tur\'{a}n problems on Berge hypergraphs, one can refer to Subsection 5.2.2 in \cite{A666}. 

There are fewer results known about Berge forests. Gerbner, Methuku and Palmer \cite{B4} established bounds on the Tur\'{a}n numbers of Berge trees. Gy\H{o}ri, Salia, Tompkins and Zamora \cite{10-10A2} showed that any $r$-graph with more than $\frac{n(k+1)}{r+1}$ hyperedges contains a Berge copy of any tree with $k$ edges different from the star, where $r\geq k(k-2)$.
Kang, Ni and Shan \cite{B7} studied the Tur\'{a}n numbers of Berge matchings for $r$-graph. Khormali and Palmer \cite{D4} proved an asymptotic result for the Tur\'{a}n numbers of Berge star forests. In particular, they determined the Tur\'{a}n numbers of Berge matchings for sufficiently large $n$. 

The purpose of this paper is to investigate Tur\'{a}n problems on Berge forests and to give some exact and general results. 





\section{\normalsize Main results} 
In this paper, we determine some exact results on the Tur\'{a}n numbers for several types of Berge forests. 
Let $H_1\cup H_2$ denote the disjoint union of hypergraphs $H_1$ and $H_2$, and $kH$ denote the disjoint union of $k$ hypergraphs $H$. For positive integers $a$ and $b$, define $\binom{a}{b}=0$ if $a<b$. 

\subsection{\normalsize Forests with star components} 
\ \ \ \ 
Let $T_\ell$ denote a tree with $\ell$ edges. In particular, let $P_\ell$ and $S_\ell$ denote a path and a star with $\ell$ edges, respectively.
The Erd\H os-S\'os conjecture \cite{Er} states that ${\rm ex}(n,T_\ell)\leq \frac{n(\ell-1)}{2}$. The conjecture is known to hold for paths by the Erd\H os-Gallai theorem \cite{EGa} and for spiders \cite{FHL}. First, for small $r$, we establish an exact result for a class of Berge forests that contains at least one star. 

\begin{thm}\label{thm1}
    Let integers $k\geq2$, $\ell\geq1$, $1\leq i\leq \ell+1$ and $2\leq r\leq k+\ell-1$. Suppose that ${\rm{ex}}_p(n,{\rm Berge}{\text -}T_\ell)\leq \binom{\ell}{p}\frac{n}{\ell}$ for each $2\le p\le r$, in particular, the Erd\H os-S\'os conjecture holds for $T_\ell$. Then for sufficiently large $n$, 
    \begin{eqnarray*}
    {\rm{ex}}_r(n,{\rm Berge}{\text -}T_\ell\cup (k-1)S_i)\leq \bigg(\binom{\ell+k-1}{r}-\binom{k-1}{r}\bigg)\bigg\lceil\frac{n-k+1}{\ell}\bigg\rceil+\binom{k-1}{r}. 
    \end{eqnarray*}
    Moreover, let $0\leq t\leq k-1$. If $\ell\,|\,n-k+1$, then 
    \begin{eqnarray*}
    {\rm{ex}}_r(n,{\rm Berge}{\text -}T_\ell\cup tS_\ell\cup (k-1-t)S_{\ell+1})= \bigg(\binom{\ell+k-1}{r}-\binom{k-1}{r}\bigg)\frac{n-k+1}{\ell}+\binom{k-1}{r}. 
    \end{eqnarray*}
\end{thm}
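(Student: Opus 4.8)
My plan is to obtain the exact value in the last display by matching the upper bound of the first display with a matching construction.

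\medskip

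\emph{The construction (lower bound).} Put $F=T_\ell\cup tS_\ell\cup(k-1-t)S_{\ell+1}$ and, using $\ell\mid n-k+1$, set $N=(n-k+1)/\ell$. I fix a set $A$ of $k-1$ vertices, partition the remaining $n-k+1$ vertices into blocks $B_1,\dots,B_N$ of size $\ell$, and let $H$ be the $r$-graph whose hyperedges are all $r$-subsets of $A$ together with, for each $j$, all $r$-subsets of $A\cup B_j$ that meet $B_j$. Since $2\le r\le k+\ell-1=|A\cup B_j|$, this is well defined and $e(H)=\binom{k-1}{r}+N\bigl(\binom{\ell+k-1}{r}-\binom{k-1}{r}\bigr)$, which is the asserted value. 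If $H$ contained a Berge-$F$, via an injection $\phi\colon E(F)\to E(H)$ with $e\subseteq\phi(e)$, then $V(F)\subseteq V(H)$ and the $k$ components of $F$ would occupy pairwise disjoint vertex sets. Every hyperedge of $H$ lies inside $A$ or inside a single $A\cup B_j$; hence if a connected subgraph $C$ of $F$ avoids $A$, then each edge $uv$ of $C$ has $\phi(uv)\subseteq A\cup B_j$ and, since $u,v\notin A$, forces $u,v\in B_j$, so connectivity of $C$ puts $V(C)$ inside a single block and $|V(C)|\le\ell$. As each component of $F$ is a tree on at least $\ell+1$ vertices, each component must meet $A$, and since the components are vertex-disjoint they would occupy at least $k$ distinct vertices of $A$, contradicting $|A|=k-1$. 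So $H$ is Berge-$F$-free.

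\medskip

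\emph{Reducing the upper bound to the first display.} Since $S_\ell\subseteq S_{\ell+1}$ as graphs, $F$ is a subgraph of $G:=T_\ell\cup(k-1)S_{\ell+1}$, so any hypergraph containing a Berge-$G$ contains a Berge-$F$; equivalently, every Berge-$F$-free $r$-graph is Berge-$G$-free, whence $\ex_r(n,\text{Berge-}F)\le\ex_r(n,\text{Berge-}G)$. Applying the first display with $i=\ell+1$ and using $\ell\mid n-k+1$ to drop the ceiling gives $\ex_r(n,\text{Berge-}G)\le\bigl(\binom{\ell+k-1}{r}-\binom{k-1}{r}\bigr)(n-k+1)/\ell+\binom{k-1}{r}$. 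Combined with the construction this yields the claimed equality, so everything reduces to the first display.

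\medskip

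\emph{Proving the first display.} I would induct on $k$, the case $k=1$ being the hypothesis $\ex_r(n,\text{Berge-}T_\ell)\le\binom{\ell}{r}n/\ell$. If $H$ is already Berge-$(T_\ell\cup(k-2)S_i)$-free, the inductive bound settles it, since for large $n$ it lies below the $k$-bound (the gap is a positive multiple of $n$ because $\binom{\ell+k-1}{r}-\binom{k-1}{r}>\binom{\ell+k-2}{r}-\binom{k-2}{r}$). Otherwise, fix a large constant $D_0$ and set $A=\{v:d_H(v)\ge D_0\}$; the crux is the claim $|A|\le k-1$. Granting it, I partition $E(H)$ according to the trace $A'=e\cap A$: hyperedges with $A'=e$ (those inside $A$) number at most $\binom{k-1}{r}$, while for $|A'|<r$ the hyperedges of trace $A'$ are exactly the edges of the link $(r-|A'|)$-graph $L_{A'}$ on $V\setminus A$, which is Berge-$T_\ell$-free: a Berge-$T_\ell$ in $L_{A'}$ would lift to a Berge-$T_\ell$ in $H$ with skeleton in $V\setminus A$ and all $\ell$ hyperedges through $A'$, after which all $k-1$ vertices of $A$ (those of $A'$ included, since the skeleton avoids $A$) greedily host $k-1$ vertex-disjoint Berge-$S_i$'s, yielding a Berge-$(T_\ell\cup(k-1)S_i)$. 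Hence $e(L_{A'})\le\binom{\ell}{r-|A'|}\lceil(n-k+1)/\ell\rceil$ by the hypothesis (trivially if $r-|A'|=1$), and $\sum_{A'\subseteq A,\,|A'|<r}\binom{\ell}{r-|A'|}=\binom{|A|+\ell}{r}-\binom{|A|}{r}$ by Vandermonde, which is non-decreasing in $|A|$ and hence at most $\binom{\ell+k-1}{r}-\binom{k-1}{r}$; this gives exactly $e(H)\le\binom{k-1}{r}+\bigl(\binom{\ell+k-1}{r}-\binom{k-1}{r}\bigr)\lceil(n-k+1)/\ell\rceil$.

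\medskip

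\emph{The main obstacle} is the claim $|A|\le k-1$, and it is precisely here that ``$n$ sufficiently large'' is used. If $|A|\ge k$, one wants to build a Berge-$(T_\ell\cup(k-1)S_i)$ using $k-1$ high-degree vertices as star centers — each such vertex, having degree at least $D_0$, can grow a Berge-$S_i$ avoiding any bounded set of already-used vertices and hyperedges — together with a Berge-$T_\ell$ elsewhere. After disposing of the easy case $e(H)\le\binom{\ell}{r}n/\ell$ (which covers $H$ being Berge-$T_\ell$-free and, one checks, is strictly below the target for large $n$), the remaining difficulty is to produce a Berge-$T_\ell$ meeting $A$ in few enough vertices, while controlling the codegrees among vertices of $A$ during the greedy embedding.
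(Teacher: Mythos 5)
Your lower-bound construction and the reduction of the ``moreover'' part to the first display coincide with the paper's, and your trace decomposition of $E(H)$ over the subsets $A'\subseteq A$ together with the Vandermonde identity is a clean way to organize the final count. However, the upper bound has two genuine gaps, and they sit exactly where the work of the paper's proof lies.

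First, the ``crux claim'' $|A|\le k-1$ is not proved, and as a structural statement about Berge-$(T_\ell\cup(k-1)S_i)$-free hypergraphs it is false: already for $T_\ell=P_3$, the disjoint union of $k$ sunflowers with singleton cores $u_1,\dots,u_k$, each consisting of $D_0$ hyperedges pairwise meeting only in $u_j$, is Berge-$P_3$-free (every connected realizable skeleton is a star), hence Berge-$F$-free, yet has $k$ vertices of degree $D_0$. So the claim can only be established under the additional hypothesis that $e(H)$ exceeds the target bound, and proving it then is essentially the paper's Case 2: one compares the sizes of the trace classes $E_j=\{e:|e\cap A|=j\}$ with the corresponding classes of the extremal example $H^*$, finds some $j$ with $|E_j|>|E^*_j|$, passes to the multiset of links $\{e\setminus A\}$ (dividing by the multiplicity $\binom{k-1}{j}$) to extract a Berge-$T_\ell$ whose skeleton avoids $A$, and only then attaches the $k-1$ stars greedily at high-degree vertices. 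This is the hardest part of the proof and is missing from your sketch; you correctly identify it as the obstacle but do not supply the argument. Second, even granting $|A|\le k-1$, your justification that each link $L_{A'}$ is Berge-$T_\ell$-free requires $k-1$ high-degree vertices to serve as star centers, so it only works when $|A|=k-1$ exactly. When $|A|\le k-2$ the paper needs a genuinely different idea (its Case 1): comparing $\sum_v d_H(v)=r\,e(H)$ with the same sum in $H^*$ shows that $\Omega(n)$ vertices outside $V_0$ have degree larger than $\binom{\ell+k-2}{r-1}$, which via the Khormali--Palmer lemma yields $k-1$ pairwise disjoint Berge-$S_{\ell+1}$'s among the low-degree vertices; one deletes their $O(1)$ vertices and incident hyperedges and finds the Berge-$T_\ell$ in what remains. (A smaller structural point: the paper inducts on $r$, not on $k$; your induction on $k$ cannot be closed at the boundary $r=k+\ell-1$, since the statement for $k-1$ only covers $r\le k+\ell-2$, though this dichotomy does not appear to be load-bearing in your argument.)
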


Let $M_{s+1}$ denote a matching of size $s+1$, i.e. the graph that consists of $s+1$ independent edges. Note that the condition on $T_\ell$ in the above theorem is trivial when $\ell=1$. Then the above theorem implies the following result. 

\begin{cor}\label{thmnew11}
    Let integers $k\geq2$, $2\leq r\leq k$ and $1\leq t\leq k$. Then for sufficiently large $n$,
    \begin{eqnarray*}
    {\rm{ex}}_r(n,{\rm Berge}{\text -}M_t\cup (k-t)S_2)= \binom{k-1}{r-1}(n-k+1)+\binom{k-1}{r}.
    \end{eqnarray*}
\end{cor}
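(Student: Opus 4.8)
The plan is to deduce this directly from Theorem \ref{thm1}, together with a matching construction. First observe that $M_t$ is a forest with exactly $t$ components, each of which is a single edge $S_1 = P_1 = T_1$; so $M_t \cup (k-t)S_2$ is a forest consisting of $t$ copies of $T_1$ and $k-t$ copies of $S_2$. The idea is to single out one of the $T_1$-components as the ``$T_\ell$'' with $\ell = 1$, and view the remaining $t-1$ copies of $T_1 = S_1$ together with the $k-t$ copies of $S_2$ as the ``$(k-1)$ star components''. Concretely, in the notation of Theorem \ref{thm1} we set $\ell = 1$, keep $k$ as is, and in the ``moreover'' part choose $t' := t-1$ (the number of $S_\ell = S_1$ components among the $k-1$ stars) and $k-1-t' = k-t$ (the number of $S_{\ell+1} = S_2$ components). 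Since $\ell = 1$ the divisibility hypothesis $\ell \mid n-k+1$ is automatic, and as noted in the paragraph preceding the corollary the condition ``${\rm ex}_p(n,{\rm Berge}\text{-}T_\ell) \le \binom{\ell}{p}\tfrac{n}{\ell}$ for $2 \le p \le r$'' is vacuous when $\ell = 1$ (indeed $\binom{1}{p} = 0$ for $p \ge 2$, and a Berge-$T_1$ is a single hyperedge, which no nonempty hypergraph avoids — one must read the bound here in the degenerate convention that makes the statement consistent, i.e. the relevant constraint is simply $r \le k + \ell - 1 = k$). Plugging $\ell = 1$ into the exact formula of Theorem \ref{thm1} gives
\[
\left(\binom{k}{r} - \binom{k-1}{r}\right)\frac{n-k+1}{1} + \binom{k-1}{r} = \binom{k-1}{r-1}(n-k+1) + \binom{k-1}{r},
\]
using Pascal's identity $\binom{k}{r} = \binom{k-1}{r} + \binom{k-1}{r-1}$. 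This is exactly the claimed value, so the upper bound follows immediately from Theorem \ref{thm1} and the lower bound follows from whatever extremal construction Theorem \ref{thm1} uses, specialized to $\ell = 1$.

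For completeness I would also exhibit the extremal hypergraph directly, to make the lower bound self-contained: take a set $A$ of $k-1$ vertices and $n-k+1$ further vertices partitioned into blocks of size $\ell = 1$ (i.e. singletons $\{v\}$); on each block $A \cup \{v\}$ (a set of size $k$) place all $\binom{k}{r}$ $r$-subsets that are not entirely inside $A$, and separately place all $\binom{k-1}{r}$ $r$-subsets of $A$ once. The hyperedge count is $\big(\binom{k}{r}-\binom{k-1}{r}\big)(n-k+1) + \binom{k-1}{r}$ as required. One then checks this hypergraph contains no Berge-$M_t \cup (k-t)S_2$: any Berge subgraph must use hyperedges, each of which meets $A$ in at least $r - 1 \ge 1$ vertices, so a Berge copy of a forest with $k$ components would need $k$ ``witness'' edges in the skeleton spread across components, but the only vertices of degree exceeding a constant are those in $A$, and $|A| = k-1$ forces two components of the skeleton to share structure through $A$ in a way that a matching-plus-stars on $k$ components (all of whose non-leaf vertices would have to lie in $A$) cannot be realized — this is the standard counting argument for such constructions.

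The main obstacle is not in the corollary itself, which is a clean specialization, but in making sure the degenerate case $\ell = 1$ is actually covered by the proof of Theorem \ref{thm1} rather than merely by its statement — in particular that the inductive or counting argument there does not secretly assume $\ell \ge 2$ (e.g. that blocks of size $\ell$ are nontrivial, or that Berge-$T_\ell$-freeness is a nonvacuous hypothesis). If the proof of Theorem \ref{thm1} does implicitly need $\ell \ge 2$, then the honest route is to prove Corollary \ref{thmnew11} from scratch: the upper bound via the same extremal-graph / degree-counting scheme (bounding the number of hyperedges meeting $A$ in exactly $r$ vertices by $\binom{k-1}{r}$ and the rest by a Berge-matching argument on the ``link'' structure outside $A$, invoking Theorem \ref{lem2.2}(ii) or the Berge-matching results of Kang–Ni–Shan / Khormali–Palmer cited above for the $S_2$ and $M_t$ parts), and the lower bound via the construction displayed in the previous paragraph. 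I would first attempt the one-line deduction and only fall back to the direct argument if the $\ell=1$ edge case is not literally inside the scope of Theorem \ref{thm1}'s proof.
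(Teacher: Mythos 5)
Your deduction is exactly the paper's: the corollary is obtained by specializing Theorem \ref{thm1} to $\ell=1$ (taking the theorem's parameter $t$ to be your $t-1$, so that $T_1\cup(t-1)S_1\cup(k-t)S_2=M_t\cup(k-t)S_2$) and simplifying via Pascal's identity, which is all the paper does. Your hedging about the degenerate case is unnecessary: since ${\rm ex}_p(n,{\rm Berge}\text{-}T_1)=0=\binom{1}{p}\tfrac{n}{1}$ for $p\ge 2$, the hypothesis of Theorem \ref{thm1} holds literally (no special convention needed), and the theorem is stated for $\ell\ge 1$.
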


Gerbner, Methuku and Palmer \cite{B4} showed that if the Erd\H os-S\'os conjecture holds for all subtrees of $T_\ell$ and $\ell>r+1>3$, then ${\rm{ex}}_r(n,{\rm Berge}{\text -}T_\ell)\leq \binom{\ell}{r}\frac{n}{\ell}$. Combining this result with Theorem \ref{thm1}, we derive the following proposition. 

\begin{prop}\label{newcor}
    Let integers $k\geq2$, $\ell\geq5$, $1\leq i\leq \ell+1$ and $2\leq r<\ell-1$. Suppose that the Erd\H os-S\'os conjecture holds for $T_\ell$ and its subtrees. Then for sufficiently large $n$,
    \begin{eqnarray*}
    {\rm{ex}}_r(n,{\rm Berge}{\text -}T_\ell\cup (k-1)S_i)\leq \bigg(\binom{\ell+k-1}{r}-\binom{k-1}{r}\bigg)\bigg\lceil\frac{n-k+1}{\ell}\bigg\rceil+\binom{k-1}{r}.
    \end{eqnarray*}
    Moreover, let $0\leq t\leq k-1$. If $\ell\,|\,n-k+1$, then 
    \begin{eqnarray*}
    {\rm{ex}}_r(n,{\rm Berge}{\text -}T_\ell\cup tS_\ell\cup (k-1-t)S_{\ell+1})= \bigg(\binom{\ell+k-1}{r}-\binom{k-1}{r}\bigg)\frac{n-k+1}{\ell}+\binom{k-1}{r}. 
    \end{eqnarray*}
\end{prop}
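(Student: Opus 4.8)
\noindent\emph{Proof proposal.}
The plan is to obtain Proposition~\ref{newcor} as an immediate consequence of Theorem~\ref{thm1} by checking that all of its hypotheses hold under the present assumptions. The only hypothesis of Theorem~\ref{thm1} that is not purely numerical is the requirement that ${\rm ex}_p(n,{\rm Berge}{\text -}T_\ell)\le\binom{\ell}{p}\frac n\ell$ for every $2\le p\le r$, so the first and essentially only step is to verify these bounds from the assumption that the Erd\H{o}s--S\'os conjecture holds for $T_\ell$ and all of its subtrees, together with the constraints $k\ge2$, $\ell\ge5$ and $2\le r<\ell-1$.

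I would split the verification according to the value of $p$. For $p=2$, a $2$-uniform Berge-$T_\ell$ is simply a subgraph isomorphic to $T_\ell$, since in the defining bijection $\phi$ the equality $|e|=|\phi(e)|=2$ forces $\phi(e)=e$; hence ${\rm ex}_2(n,{\rm Berge}{\text -}T_\ell)={\rm ex}(n,T_\ell)$, and the Erd\H{o}s--S\'os conjecture for $T_\ell$ is precisely the statement ${\rm ex}(n,T_\ell)\le\frac{(\ell-1)n}{2}=\binom{\ell}{2}\frac n\ell$. For $3\le p\le r$, I would invoke the theorem of Gerbner, Methuku and Palmer quoted before the statement: the Erd\H{o}s--S\'os conjecture holds for all subtrees of $T_\ell$, and from $3\le p\le r<\ell-1$ we obtain $\ell>p+1>3$, so that result yields ${\rm ex}_p(n,{\rm Berge}{\text -}T_\ell)\le\binom{\ell}{p}\frac n\ell$. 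This is the place where $\ell\ge5$ matters: it ensures that the Gerbner--Methuku--Palmer hypothesis $\ell>p+1>3$ is compatible with $p\ge3$.

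With these bounds established, the remaining hypotheses of Theorem~\ref{thm1} are immediate: $k\ge2$ and $\ell\ge1$ hold (indeed $\ell\ge5$), $1\le i\le\ell+1$ is assumed, and $2\le r\le k+\ell-1$ follows from $r<\ell-1\le k+\ell-1$. Applying Theorem~\ref{thm1} then gives, for sufficiently large $n$, the claimed upper bound for ${\rm ex}_r(n,{\rm Berge}{\text -}T_\ell\cup(k-1)S_i)$, and, whenever $\ell\mid n-k+1$ and $0\le t\le k-1$, the claimed exact value of ${\rm ex}_r(n,{\rm Berge}{\text -}T_\ell\cup tS_\ell\cup(k-1-t)S_{\ell+1})$.

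I do not expect a genuine obstacle here, since the argument is a bookkeeping combination of two earlier results. The only points deserving care are the separate treatment of $p=2$, which lies outside the range $\ell>r+1>3$ covered by Gerbner--Methuku--Palmer and must instead be reduced directly to Erd\H{o}s--S\'os via the identification of a $2$-uniform Berge-$T_\ell$ with a copy of $T_\ell$, and a check that the Gerbner--Methuku--Palmer bound is valid for all $n$, so that the sole ``sufficiently large $n$'' restriction in the conclusion is the one inherited from Theorem~\ref{thm1}.
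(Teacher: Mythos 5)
Your proposal is correct and matches the paper's derivation, which likewise obtains Proposition~\ref{newcor} by feeding the Gerbner--Methuku--Palmer bound (for $3\le p\le r$, using $\ell>p+1>3$) into Theorem~\ref{thm1}. Your explicit handling of the $p=2$ case via the identification of a $2$-uniform Berge-$T_\ell$ with $T_\ell$ and the Erd\H{o}s--S\'os conjecture is exactly the intended reading, just spelled out in more detail than the paper's one-line justification.
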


Since the Erd\H os-S\'os conjecture holds for paths, the above proposition implies sharp bounds for ${\rm{ex}}_r(n,{\rm Berge}{\text -}P_\ell\cup tS_\ell\cup (k-1-t)S_{\ell+1})$ when $3\leq r\leq\ell-1$, where $0\leq t\leq k-1$. 

Furthermore, we also obtain the following result for large $r$.

\begin{thm}\label{thm2}
    Let integers $k\geq2$, $\ell_1\geq3$, $\ell_2\geq2$ and $r\geq\ell_1+\ell_2+k-1$. Let $\ell_{\rm max}:=\max\{\ell_1,\ell_2\}$ and $\ell_{\rm min}:=\min\{\ell_1,\ell_2\}$. Then for sufficiently large $n$,
    \begin{eqnarray*}
    (\ell_{\rm min}-1)\Big\lfloor\frac{n-k+1}{r-k+2}\Big\rfloor\leq {\rm{ex}}_r(n,{\rm Berge}{\text -}P_{\ell_1}\cup (k-1)S_{\ell_2})\leq \frac{(\ell_{\rm max}-1)n}{r-k+2}+O(1).
    \end{eqnarray*}
\end{thm}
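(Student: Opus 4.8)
The plan is to handle the lower and upper bounds separately, the lower bound being an explicit construction and the upper bound being the substantive part. For the lower bound, note that $r-k+2 \geq \ell_1+\ell_2+1 \geq \ell_{\min}+2$, so in particular $r-k+2 > \ell_{\min}-1$. First I would partition $n-k+1$ vertices into $\lfloor (n-k+1)/(r-k+2)\rfloor$ groups of size $r-k+2$ (discarding leftovers), and set aside a fixed set $C$ of $k-1$ extra vertices. On each group $G_j$ of size $r-k+2$, I take a graph $H_j$ on $r-k+2$ vertices with no path on $\ell_{\min}$ edges and no star with $\ell_{\min}$ edges but with exactly $\ell_{\min}-1$ edges per group in a way that avoids any Berge-$P_{\ell_1}$ on that group alone — concretely, pick $\ell_{\min}-1$ edges forming a structure of bounded path-length; then enlarge each such edge to an $r$-set by adjoining all $k-1$ vertices of $C$. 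This yields $(\ell_{\min}-1)\lfloor (n-k+1)/(r-k+2)\rfloor$ hyperedges. One then checks that this hypergraph is Berge-$(P_{\ell_1}\cup(k-1)S_{\ell_2})$-free: any Berge-$S_{\ell_2}$ must use a vertex of $C$ as its center (since each group has too few edges and vertices of the groups have small degree relative to $\ell_2$ once the construction is tuned), and there are only $k-1$ such vertices, so we cannot fit $k-1$ vertex-disjoint Berge stars together with a disjoint Berge-$P_{\ell_1}$; the precise bookkeeping of which component can live where is the routine part. (Some care is needed in choosing the per-group graph so that it simultaneously blocks a long Berge path through a single group and a large Berge star centered in a group; taking each $H_j$ to be e.g. $\ell_{\min}-1$ disjoint edges together with the clique $C$ does the job.)

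For the upper bound, let $H$ be an $n$-vertex $r$-graph with $e(H) > \frac{(\ell_{\max}-1)n}{r-k+2}+C_0$ for a suitable constant $C_0=C_0(k,\ell_1,\ell_2)$, and suppose for contradiction that $H$ is Berge-$(P_{\ell_1}\cup(k-1)S_{\ell_2})$-free. The key step is a \emph{greedy peeling} argument: repeatedly remove the vertices of low degree and the hyperedges incident to them, so that we may assume a minimum-degree-type condition, namely that every vertex lies in at least $\frac{\ell_{\max}-1}{r-k+2}+\varepsilon$ hyperedges in a minimum-degree-like sense — more precisely, by a standard averaging/peeling one reduces to a subhypergraph $H'$ in which $e(H')/v(H') > \frac{\ell_{\max}-1}{r-k+2}$ still holds and every vertex has degree exceeding the average, losing only $O(1)$ in the edge count bound; the point is that in such a dense $r$-graph one can build long Berge paths and large Berge stars greedily. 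Then I would invoke Theorem \ref{lem2.2}(ii): if $H'$ contained no Berge-$P_{\ell_1}$ at all, then since $r \geq \ell_1+\ell_2+k-1 \geq \ell_1$ we would get $e(H') \leq \frac{v(H')(\ell_1-1)}{r+1} < \frac{(\ell_{\max}-1)v(H')}{r-k+2}$, a contradiction (as $r+1 > r-k+2$ and $\ell_1-1 \leq \ell_{\max}-1$); hence $H'$ contains a Berge-$P_{\ell_1}$, using some set $W$ of at most $\ell_1$ vertices and $\ell_1$ hyperedges. One then deletes these $\ell_1$ hyperedges together with the $O(1)$ vertices in $W$ and continues: the deletion drops the edge count by only $\ell_1 = O(1)$ and the vertex count by $O(1)$, so after removing $k-1$ pairwise disjoint Berge-stars of size $\ell_2$ — each found because the remaining hypergraph, restricted away from the $O(1)$ used vertices, still has a vertex of large degree and hence a Berge-$S_{\ell_2}$ — together with the Berge-$P_{\ell_1}$, we assemble a vertex-disjoint Berge-$(P_{\ell_1}\cup(k-1)S_{\ell_2})$, the desired contradiction. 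The constant $C_0$ must be chosen large enough to absorb all the $O(1)$ deletions (at most $\ell_1 + (k-1)\ell_2$ hyperedges and a bounded number of vertices) while keeping the density above the threshold; sufficiently large $n$ guarantees there is still something left to work with.

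The main obstacle I anticipate is the \emph{interface between the path and the stars}: after extracting the Berge-$P_{\ell_1}$ on a bounded vertex set $W$, one must guarantee that the remaining hypergraph (on $n-O(1)$ vertices, after deleting $W$ and the path's hyperedges) still has density strictly above $\frac{\ell_{\max}-1}{r-k+2}$ and still admits $k-1$ \emph{pairwise disjoint} Berge-$S_{\ell_2}$'s avoiding $W$. Finding many disjoint Berge stars is where the factor $r-k+2$ in the denominator genuinely enters: each Berge-$S_{\ell_2}$ can be chosen to use only $1 + \ell_2$ vertices, but to keep the $k-1$ stars disjoint from each other and from the path one needs each successive star to be centered at a vertex of high degree whose star can be routed through fresh vertices — this is feasible precisely because a vertex of degree $\gg \ell_1 + k\ell_2$ in an $r$-graph still has $\ell_2$ hyperedges whose symmetric differences provide $\ell_2$ distinct private vertices (here $r$ being large, $r \geq \ell_1+\ell_2+k-1$, gives each hyperedge enough room). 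Making the degree/density accounting survive $k-1$ rounds of such extraction, rather than just one, is the crux; I expect to formalize it as an induction on the number of star-components still to be found, with the inductive hypothesis carrying both a density lower bound and a bound on the size of the already-used vertex set.
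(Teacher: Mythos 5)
Your lower bound is essentially the paper's construction: a fixed set $A$ of $k-1$ vertices added to every hyperedge, $\lfloor\frac{n-k+1}{r-k+2}\rfloor$ groups of size $r-k+2$, and $\ell_{\rm min}-1$ hyperedges per group. It is correct, but the point you defer to ``routine bookkeeping'' is the actual argument: every vertex outside $A$ has degree at most $\ell_{\rm min}-1\le \ell_2-1$, so no Berge-$S_{\ell_2}$ can be centered there, and since a single group carries only $\ell_{\rm min}-1<\ell_1$ hyperedges, any Berge-$P_{\ell_1}$ must use hyperedges from two groups, which intersect exactly in $A$, forcing a skeleton vertex of the path into $A$ as well; then $k$ pairwise disjoint skeletons would need $k$ vertices of $A$. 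This should be spelled out rather than waved at, but it is not a gap.

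The upper bound, however, has genuine gaps. First, the peeling step fails at this density: the threshold $\frac{\ell_{\rm max}-1}{r-k+2}$ is less than one hyperedge per vertex (since $r-k+2\ge \ell_1+\ell_2+1>\ell_{\rm max}$), so deleting low-degree vertices cannot produce any minimum-degree condition strong enough to grow stars; the average degree is only slightly above $\ell_{\rm max}-1$, and all of the excess can sit on a bounded number of vertices. Consequently ``the remaining hypergraph still has a vertex of large degree'' does not yield $k-1$ \emph{pairwise disjoint} Berge-$S_{\ell_2}$'s: one must split into the case of at least $k-1$ high-degree vertices (which can serve as centers, but then the Berge-$P_{\ell_1}$ must be arranged to avoid them, something your plan does not do) and the case of at most $k-2$ high-degree vertices (where one needs $\Omega(n)$ vertices of degree at least $\ell_2$ among the bounded-degree vertices, i.e.\ an average-degree bound $\ell_2-1+\epsilon$ on that part whose failure is exactly what produces the final estimate). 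Second, your argument never actually generates the constant $\frac{\ell_{\rm max}-1}{r-k+2}$: your only quantitative inputs are Theorem \ref{lem2.2}\,\textbf{(ii)} at uniformity $r$ (denominator $r+1$) and $O(1)$ deletions. The paper obtains the denominator $r-k+2$ by deleting the $k-1$ largest-degree vertices from every hyperedge, trimming to a simple $(r-k+1)$-uniform hypergraph, and applying the Erd\H{o}s--Gallai-type bound there (denominator $(r-k+1)+1=r-k+2$); this same projection is what guarantees that the Berge-$P_{\ell_1}$ it finds has skeleton disjoint from the $k-1$ candidate star centers. In the complementary case the bound $\frac{(\ell_2-1)n}{r-k+2}+O(1)$ comes from the identity $r\cdot e(H')=\sum_v d_{H'}(v)$ combined with the claims that there are at most $k-2$ high-degree vertices and that the remaining vertices have average degree at most $\ell_2-1+\epsilon$. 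Without these two computations the threshold in your contradiction hypothesis is unmotivated and the proof cannot close.
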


When $\ell_1=\ell_2$, Theorem \ref{thm2} yields the following corollary.

\begin{cor}\label{cor2}
    Let integers $k\geq2$, $\ell\geq3$ and $r\geq 2\ell+k-1$. For sufficiently large $n$,
    \begin{eqnarray*}
    {\rm{ex}}_r(n,{\rm Berge}{\text -}P_{\ell}\cup (k-1)S_{\ell})=\frac{(\ell-1)n}{r-k+2}+O(1).
    \end{eqnarray*}
\end{cor}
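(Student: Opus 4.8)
The plan is to deduce Corollary~\ref{cor2} as the special case $\ell_1=\ell_2=\ell$ of Theorem~\ref{thm2}, so that no substantial new argument is needed. First I would check that the hypotheses line up: Theorem~\ref{thm2} requires $k\geq 2$, $\ell_1\geq 3$, $\ell_2\geq 2$ and $r\geq \ell_1+\ell_2+k-1$; with $\ell_1=\ell_2=\ell\geq 3$ the first three conditions hold, and the last becomes $r\geq 2\ell+k-1$, which is exactly the hypothesis of Corollary~\ref{cor2}. In this specialization $\ell_{\rm max}=\ell_{\rm min}=\ell$, so Theorem~\ref{thm2} gives, for all sufficiently large $n$,
\[
(\ell-1)\lf\frac{n-k+1}{r-k+2}\rf\ \le\ {\rm{ex}}_r(n,{\rm Berge}\text{-}P_\ell\cup (k-1)S_\ell)\ \le\ \frac{(\ell-1)n}{r-k+2}+O(1).
\]

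Next I would observe that the two outer terms agree up to an additive constant. The upper bound is already in the form stated in the corollary. For the lower bound, since $k$, $\ell$ and $r$ are fixed while $n\to\infty$, we have $\lf\frac{n-k+1}{r-k+2}\rf\ge \frac{n-k+1}{r-k+2}-1=\frac{n}{r-k+2}-O(1)$, hence $(\ell-1)\lf\frac{n-k+1}{r-k+2}\rf=\frac{(\ell-1)n}{r-k+2}-O(1)$. Combining the two inequalities yields ${\rm{ex}}_r(n,{\rm Berge}\text{-}P_\ell\cup (k-1)S_\ell)=\frac{(\ell-1)n}{r-k+2}+O(1)$, which is the assertion of Corollary~\ref{cor2}.

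Thus there is essentially no obstacle at the level of the corollary itself; all of the content has been pushed into Theorem~\ref{thm2}, and I expect that to be the genuinely hard part. There the difficulty is to match the linear leading term $\frac{(\ell-1)n}{r-k+2}$ from both sides. The lower bound should come from an explicit extremal construction: fix $k-1$ common vertices $w_1,\dots,w_{k-1}$, partition the remaining $\approx n-k+1$ vertices into blocks $B$ of size $r-k+2$, and on each resulting $(r+1)$-set $\{w_1,\dots,w_{k-1}\}\cup B$ place $\ell-1$ hyperedges; one then checks that in any Berge copy of $P_\ell\cup (k-1)S_\ell$ each star centre is forced onto one of the $w_i$ and hence all $k-1$ of them are used, after which all $\ell$ edges of the path are confined to a single block carrying only $\ell-1$ hyperedges, a contradiction. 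The upper bound is where the assumption $r\geq 2\ell+k-1$ is really needed: the width of the hyperedges lets one simultaneously build the required $k-1$ Berge stars around high-degree vertices and keep their reserved vertices disjoint from a long Berge path, and this is the step I would expect to demand the most care.
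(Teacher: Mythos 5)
Your derivation is correct and matches the paper exactly: the paper obtains Corollary~\ref{cor2} precisely by specializing Theorem~\ref{thm2} to $\ell_1=\ell_2=\ell$, where the lower and upper bounds coincide up to an additive $O(1)$ term. Your closing sketch of how Theorem~\ref{thm2} itself is proved (the $k-1$ common vertices plus blocks of size $r-k+2$ carrying $\ell-1$ hyperedges each) also agrees with the paper's construction, but none of that is needed for the corollary.
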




More generally, we establish a generalization of Theorem \ref{thm2} for larger $r$ and every tree. 

\begin{thm}\label{thm21}
    Let integers $k\geq2$, $\ell_1\geq3$, $\ell_2\geq2$ and $r\geq\max\{\ell_1(\ell_1-2),\ell_1+\ell_2+k-1\}$. Let $T_{\ell_1}\neq S_{\ell_1}$, $\ell_{\rm max}:=\max\{\ell_1,\ell_2\}$ and $\ell_{\rm min}:=\min\{\ell_1,\ell_2\}$. Then for sufficiently large $n$,
    \begin{eqnarray*}
    (\ell_{\rm min}-1)\Big\lfloor\frac{n-k+1}{r-k+2}\Big\rfloor\leq {\rm{ex}}_r(n,{\rm Berge}{\text -}T_{\ell_1}\cup (k-1)S_{\ell_2})\leq \frac{(\ell_{\rm max}-1)n}{r-k+2}+O(1).
    \end{eqnarray*}
\end{thm}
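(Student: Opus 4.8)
The plan is to follow the same two-sided strategy that should underlie Theorem \ref{thm2}, upgrading the path to an arbitrary tree $T_{\ell_1}\neq S_{\ell_1}$ at the cost of the stronger hypothesis $r\geq \ell_1(\ell_1-2)$, which is precisely the threshold in the Gy\H{o}ri--Salia--Tompkins--Zamora result \cite{10-10A2}. For the lower bound, I would exhibit an $n$-vertex $r$-graph that is Berge-$(T_{\ell_1}\cup (k-1)S_{\ell_2})$-free with at least $(\ell_{\rm min}-1)\lfloor (n-k+1)/(r-k+2)\rfloor$ hyperedges. The natural construction: fix a set $A$ of $k-1$ ``apex'' vertices, and partition the remaining $n-k+1$ vertices into blocks of size $r-k+2$; on each block $B$ put all $\ell_{\rm min}-1$ hyperedges one can form greedily of the form $B\cup A$-type sets... more carefully, take $r-k+2$ vertices together with the $k-1$ apices to form an $r$-set, and take $\ell_{\rm min}-1$ such $r$-sets per block so that within a block every pair of hyperedges meets in at least $\ldots$ vertices — the point is to cap the size of any Berge-subforest. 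Since every hyperedge contains all of $A$, deleting $A$ leaves a hypergraph in which each component has fewer than $\ell_{\rm min}\le\min\{\ell_1,\ell_2\}$ hyperedges, so it cannot host a Berge-$T_{\ell_1}$ component nor $k-1$ disjoint Berge-$S_{\ell_2}$'s simultaneously after accounting for the apices; I would verify the counting and the freeness directly.

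For the upper bound, suppose $H$ is an $n$-vertex Berge-$(T_{\ell_1}\cup(k-1)S_{\ell_2})$-free $r$-graph with more than $\frac{(\ell_{\rm max}-1)n}{r-k+2}+C$ hyperedges for a suitable constant $C=C(\ell_1,\ell_2,k,r)$. I would first pass to a subhypergraph of large minimum degree: iteratively delete any vertex of degree at most $\frac{\ell_{\rm max}-1}{r-k+2}$ (roughly), losing $o(n)$ or $O(1)$ hyperedges appropriately, to obtain $H'$ with $e(H')$ still large and $\delta(H')\ge \frac{\ell_{\rm max}-1}{r-k+2}+1$, hence in particular $H'$ is nonempty on $\ge$ some vertices. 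The key structural input is the theorem of Gy\H{o}ri, Salia, Tompkins and Zamora \cite{10-10A2}: since $r\ge \ell_1(\ell_1-2)$, any $r$-graph with more than $\frac{n(\ell_1+1)}{r+1}$ hyperedges contains a Berge-$T_{\ell_1}$, and more importantly a robust/iterable version of it lets me find a Berge-$T_{\ell_1}$ using few vertices, remove those vertices (and the corresponding hyperedges), and still have a large hypergraph left. I then repeat to extract $k-1$ pairwise vertex-disjoint copies of Berge-$S_{\ell_2}$ — a star is easy: once $\delta(H')$ exceeds $\ell_2-1$ there is a Berge-$S_{\ell_2}$ centered at any vertex, and disjointness costs only finitely many vertices each time. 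Assembling one Berge-$T_{\ell_1}$ and $k-1$ disjoint Berge-$S_{\ell_2}$'s, all mutually vertex-disjoint, produces a Berge-$(T_{\ell_1}\cup(k-1)S_{\ell_2})$ in $H$, a contradiction.

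The main obstacle is making the extraction process quantitatively efficient enough to match the claimed $+O(1)$ error term rather than merely $+o(n)$. Applying \cite{10-10A2} as a black box only gives existence of one Berge-$T_{\ell_1}$; to peel off $k-1$ further stars and keep the bound tight, I need that removing the $O(1)$ vertices spanned by the already-found copies drops the hyperedge count by at most $O(1)$ — this is where the preliminary reduction to bounded-below minimum degree, together with a careful accounting of how many hyperedges can be incident to a fixed finite vertex set, must be organized so the constants do not blow up with $n$. A secondary subtlety is the interplay between $\ell_{\rm max}$ in the upper bound and $\ell_{\rm min}$ in the lower bound: the argument must locate the "expensive" copy (the tree, needing $\ell_1-1$ worth of degree, or the stars, needing $\ell_2-1$) correctly, and I expect the clean way is to first greedily remove low-degree vertices using the threshold $(\ell_{\rm max}-1)/(r-k+2)$ and then observe that whatever remains is dense enough to supply all $k$ required Berge-subgraphs in turn. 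I would also need the condition $r\ge \ell_1+\ell_2+k-1$ to guarantee that an $r$-set is large enough to simultaneously carry the relevant skeleton edges when building Berge copies, exactly as in Theorem \ref{thm2}.
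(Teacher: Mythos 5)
Your lower-bound construction coincides with the paper's (an apex set $A$ of $k-1$ vertices contained in every hyperedge, blocks of size $r-k+2$ carrying $\ell_{\min}-1$ hyperedges each), and the freeness argument you gesture at is completable in the way the paper does it: every Berge-$S_{\ell_2}$ needs a skeleton vertex lying in at least $\ell_2>\ell_{\min}-1$ hyperedges, hence in $A$; and every Berge-$T_{\ell_1}$ must use hyperedges from two different blocks, hence two adjacent hyperedges meeting exactly in $A$, so its skeleton also uses a vertex of $A$. Thus the $k$ required pairwise disjoint skeletons would need $k$ vertices of $A$, while $|A|=k-1$.

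The upper bound, however, has a genuine gap, and it is precisely the one you flag yourself without resolving. An extremal $H$ has only about $\frac{(\ell_{\max}-1)n}{r-k+2}$ hyperedges, so its average degree is roughly $\frac{r(\ell_{\max}-1)}{r-k+2}\approx\ell_{\max}-1$. Iteratively deleting vertices of degree at most $t$ can destroy up to $tn$ hyperedges in total, so to guarantee that anything survives you would need $t<\frac{\ell_{\max}-1}{r-k+2}<1$ (recall $r-k+2\geq\ell_1+\ell_2+1>\ell_{\max}$); the cleaning therefore only yields minimum degree $1$, which is useless for extracting $k-1$ disjoint Berge-$S_{\ell_2}$'s with $\ell_2\geq2$, and there is no ``robust, iterable'' version of \cite{10-10A2} to substitute for it. The paper's actual route is different and is what is missing from your sketch: delete the $k-1$ highest-degree vertices $v_1,\dots,v_{k-1}$ from every hyperedge and trim the remnants to pairwise distinct $(r-k+1)$-sets, obtaining an $(r-k+1)$-uniform hypergraph with the same number of hyperedges as $H$. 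If that projection is Berge-$T_{\ell_1}$-free, the bound ${\rm{ex}}_{r-k+1}(m,{\rm Berge}{\text -}T_{\ell_1})\leq\frac{m(\ell_1-1)}{r-k+2}$ from \cite{10-10A2} finishes immediately; this is exactly where the constant $\frac{1}{r-k+2}$ comes from. Otherwise one fixes a Berge-$T_{\ell_1}$ avoiding $\{v_1,\dots,v_{k-1}\}$ and runs an averaging argument: at most $k-2$ vertices can have degree above a large constant $D$ (else one greedily builds $k-1$ huge disjoint Berge stars around them and wins), and the remaining vertices have average degree at most $\ell_2-1+\epsilon$ (else $\Omega(n)$ of them have degree at least $\ell_2$, yielding $k-1$ disjoint Berge-$S_{\ell_2}$'s avoiding the tree's skeleton). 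Summing degrees then gives $r\cdot e(H')\leq(k-2)e(H')+(\ell_2-1+\epsilon)n+O(1)$, which closes the proof. Without some replacement for this two-case projection-plus-averaging argument, your plan does not reach the claimed $+O(1)$ error term.
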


In fact, we can prove more in the case where the stars are single edges. 

\begin{thm}\label{thmstarmat}
    \textbf{(i)} If $F$ is not a matching and $r\ge k+|V(F)|$, then for sufficiently large $n$, ${\rm{ex}}_r(n,{\rm Berge}{\text -}F\cup M_{k-1})={\rm{ex}}_r(n,{\rm Berge}{\text -}F)$.

    \textbf{(ii)} If $F$ contains a cycle and $r>k$, then for sufficiently large $n$, ${\rm{ex}}_r(n,{\rm Berge}{\text -}F\cup M_{k-1})={\rm{ex}}_r(n,{\rm Berge}{\text -}F)$.

    \textbf{(iii)} Let $F$ be a graph with $1\leq w\le r-1$ vertices of degree greater than 1 and $r>k+w-1$. Then for sufficiently large $n$, ${\rm{ex}}_r(n,{\rm Berge}{\text -}F\cup M_{k-1})={\rm{ex}}_r(n,{\rm Berge}{\text -}F)$.
\end{thm}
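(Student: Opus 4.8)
The plan is to prove all three parts via the same two-step strategy: first show that $\mathrm{ex}_r(n,\mathrm{Berge}\text{-}F\cup M_{k-1})\ge \mathrm{ex}_r(n,\mathrm{Berge}\text{-}F)$, which is trivial since any $\mathrm{Berge}\text{-}F$-free hypergraph is automatically $\mathrm{Berge}\text{-}(F\cup M_{k-1})$-free; then establish the reverse inequality by a ``vertex-deletion'' argument. For the upper bound, suppose $H$ is a $\mathrm{Berge}\text{-}(F\cup M_{k-1})$-free $r$-graph on $n$ vertices with $e(H)>\mathrm{ex}_r(n,\mathrm{Berge}\text{-}F)$; I want to derive a contradiction for $n$ large. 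Since $e(H)$ exceeds the $\mathrm{Berge}\text{-}F$ threshold, $H$ contains a $\mathrm{Berge}\text{-}F$; fix one, say on the $|V(F)|$ core vertices $U$ together with the $|E(F)|$ hyperedges realizing it, and let $W$ be the union of all these hyperedges, so $|W|\le r\cdot|E(F)|=:C$, a constant. The idea is then to delete $W$ and iterate: if $H-W$ still has more than $\mathrm{ex}_r(n-|W|,\mathrm{Berge}\text{-}F)$ hyperedges, it contains another $\mathrm{Berge}\text{-}F$, but I don't need that — I only need to find $k-1$ pairwise disjoint hyperedges in $H-W$ that are also disjoint from $W$, since each such hyperedge contains an edge (a pair of vertices) and these $k-1$ pairs form a matching $M_{k-1}$ vertex-disjoint from the $\mathrm{Berge}\text{-}F$ on $U$.

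The key point is thus: a $\mathrm{Berge}\text{-}F$-free-threshold-exceeding hypergraph, after removing a bounded vertex set, still has a large matching in its hyperedge set. Concretely, $H-W$ has at least $e(H)-|W|\binom{n-1}{r-1}$ hyperedges lying entirely in $V(H)\setminus W$; wait — more carefully, the number of hyperedges of $H$ meeting $W$ is at most $|W|\binom{n-1}{r-1}$, which is $\Theta(n^{r-1})$, so this crude bound is not obviously enough when $\mathrm{ex}_r(n,\mathrm{Berge}\text{-}F)$ is itself of order $n^{r-1}$. This is where the three cases diverge, and it is the main obstacle. In case (ii), $F$ contains a cycle, so $\mathrm{ex}_r(n,\mathrm{Berge}\text{-}F)=o(n^{?})$ — actually by Gy\H ori--Lemons-type bounds a $\mathrm{Berge}\text{-}F$-free $r$-graph has $O(n^{1+1/\lfloor g/2\rfloor})$ or at worst $O(n^2)$ hyperedges when $r$ is small, but for $r>k$ one uses that removing $W$ loses only $o(e(H))$ hyperedges provided $e(H)$ grows faster than $n^{r-1}$; since $F$ has a cycle the relevant extremal number still dominates the $n^{r-1}$ loss for large $n$. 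In case (iii), $F$ has exactly $w\le r-1$ vertices of degree $>1$, so a $\mathrm{Berge}\text{-}F$ is essentially determined by choosing these $w$ ``center'' vertices and then hyperedges through them; the extremal construction has $\Theta(n^{w})$... hmm, rather one shows $\mathrm{ex}_r(n,\mathrm{Berge}\text{-}F)=\Theta(n^{\max\{1,\,?\}})$ and compares with the $O(n^{r-1})$ deletion loss — the hypothesis $r>k+w-1$ is exactly what makes the arithmetic work. In case (i), $F$ is not a matching and $r\ge k+|V(F)|$; here $\mathrm{ex}_r(n,\mathrm{Berge}\text{-}F)$ is linear in $n$ (roughly $\frac{n}{|V(F)|}\binom{|V(F)|}{r}$-type, or $O(n)$ by the general bound of Gerbner--Palmer since $F$ is not a star-forest... need $F$ not a matching), and the deletion of the constant-size $W$ loses $O(1)$ hyperedges, so $H-W$ still exceeds the threshold and one simply extracts the matching greedily.

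So the cleanest uniform argument is: (1) extract a $\mathrm{Berge}\text{-}F$ using $e(H)>\mathrm{ex}_r(n,\mathrm{Berge}\text{-}F)$; (2) let $W$ be its vertex set, $|W|\le C$ constant; (3) argue $e(H-W)>\mathrm{ex}_r(n-C,\mathrm{Berge}\text{-}F)$ still, which holds for large $n$ because $\mathrm{ex}_r(\cdot,\mathrm{Berge}\text{-}F)$ is (at most) a polynomial whose leading behaviour changes by only a lower-order amount when $n\mapsto n-C$, and simultaneously $e(H)$ was strictly above it — this monotone-gap step is exactly where the three numerical hypotheses on $r$ versus $k$, $|V(F)|$, $w$ are consumed; (4) greedily pick $k-1$ pairwise-disjoint hyperedges in $H-W$ (possible since a hypergraph with super-constant many hyperedges on $n$ vertices has a matching of any fixed size once $n$ is large — a hyperedge meets at most $(k-2)r\binom{n-1}{r-1}$ others among the first $k-2$ chosen, a negligible fraction), and from each take one pair of vertices to form $M_{k-1}$; (5) combine with the $\mathrm{Berge}\text{-}F$ on $W$ to get a $\mathrm{Berge}\text{-}(F\cup M_{k-1})$ in $H$, contradiction. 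The main obstacle, to reiterate, is step (3)/(4): ensuring that after deleting the constant-size core of the first $\mathrm{Berge}\text{-}F$ the hypergraph still has enough hyperedges (and enough ``spread'') to house a fixed-size matching — for part (i) this is immediate from linearity, but for (ii) and (iii) it requires quoting or re-deriving the right order of magnitude of $\mathrm{ex}_r(n,\mathrm{Berge}\text{-}F)$ and checking it beats $n^{r-1}$ under the stated bound on $r$; I expect one actually argues more cheaply by noting that a $\mathrm{Berge}\text{-}(F\cup M_{k-1})$-free graph with the extra $k-1$ disjoint hyperedges forbidden can have only boundedly more edges than a $\mathrm{Berge}\text{-}F$-free one via a direct counting/deletion estimate rather than comparing global extremal numbers.
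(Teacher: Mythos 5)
Your lower bound and your opening move (extract a Berge-$F$ from $e(H)>{\rm ex}_r(n,{\rm Berge}{\text -}F)$, then look for a vertex-disjoint matching) agree with the paper, but the core of your argument has two genuine gaps. First, step (4) is false as stated: a hypergraph with super-constantly many hyperedges need \emph{not} contain even two pairwise disjoint hyperedges (take all $r$-sets through a fixed vertex, or any sunflower with a large core). What you actually need is a Berge-$M_{k-1}$ — $k-1$ hyperedges with pairwise disjoint representative \emph{pairs}, not disjoint hyperedges — and the correct tool is the Khormali--Palmer determination of ${\rm ex}_r(n,{\rm Berge}{\text -}M_t)$ (Theorem \ref{lem4.1}). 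The paper deletes only the $e(F)$ hyperedges of the found Berge-$F$ (not the $\Theta(n^{r-1})$ hyperedges meeting its vertex set $W$, which is your other self-inflicted problem), observes that a Berge-$M_{k-1+|V(F)|}$ in the remainder yields $k-1$ matching edges avoiding the skeleton of the Berge-$F$, and then compares ${\rm ex}_r(n,{\rm Berge}{\text -}M_{k-1+|V(F)|})$ with ${\rm ex}_r(n,{\rm Berge}{\text -}F)$: in case (i) the former is $O(1)$ because $r\ge k+|V(F)|$, while the latter is $\Omega(n)$ since $P_2\subseteq F$; in case (ii) the former is $O(n)$ while the latter is $\omega(n)$ by Ellis--Linial. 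Your order-of-magnitude discussion for (ii) (comparing against an $n^{r-1}$ loss and asserting $e(H)$ grows faster than $n^{r-1}$) is not correct and is not needed once the deletion is done at the level of hyperedges rather than vertices.

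Second, your sketch of part (iii) is speculation that does not lead to a proof: for $k<r\le k+w$ the matching threshold ${\rm ex}_r(n,{\rm Berge}{\text -}M_{k-1+|V(F)|})$ is $\Theta(n)$ and need not be smaller than ${\rm ex}_r(n,{\rm Berge}{\text -}F)$, so the comparison strategy breaks down entirely. The paper's argument here is genuinely different: take a \emph{maximum} Berge matching $BM$ with skeleton $M$, show $|V(M)|=O(1)$, show every remaining hyperedge has at least $r-1$ vertices in $V(M)$ (else $BM$ extends), find an $(r-1)$-set $A\subseteq V(M)$ lying in $\Omega(n)$ hyperedges, and \emph{re-embed} $F$ so that its $w$ vertices of degree greater than $1$ go into $A$ and its degree-one vertices go outside $V(M)$; the new Berge-$F$ then meets only $w$ vertices of $M$, so either $k-1$ edges of $M$ survive to form the matching, or $M$ has fewer than $k-1+w$ edges and $H$ is Berge-$M_{k-1+w}$-free, hence has $O(1)$ hyperedges since $r>k+w-1$. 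This re-embedding idea, which is where the hypotheses $w\le r-1$ and $r>k+w-1$ are actually consumed, is absent from your proposal.
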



Note that sharp bounds for ${\rm{ex}}_r(n,{\rm Berge}{\text -}S_\ell)$ are known from \cite{B4,D4}. Therefore, the above theorem implies sharp bounds for ${\rm{ex}}_r(n,{\rm Berge}{\text -}S_\ell\cup M_{k-1})$ when $r>k$ and $\ell\geq2$. 

\begin{cor}\label{thmnew12}
    Let integers $\ell\geq2$, $r>k\geq2$ and $n$ be sufficiently large. If $\ell\leq r+1$, then 
    \begin{eqnarray*}
    {\rm{ex}}_r(n,{\rm Berge}{\text -}S_\ell\cup M_{k-1})= \Big\lfloor\frac{n(\ell-1)}{r}\Big\rfloor. 
    \end{eqnarray*}

    If $\ell>r+1$, then 
    \begin{eqnarray*}
    {\rm{ex}}_r(n,{\rm Berge}{\text -}S_\ell\cup M_{k-1})\leq \frac{n}{\ell}\binom{\ell}{r}.
    \end{eqnarray*}
    Moreover, this bound is sharp whenever $\ell$ divides $n$. 
\end{cor}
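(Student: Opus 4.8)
\noindent\emph{Proof idea.}
The plan is to derive Corollary~\ref{thmnew12} as an immediate consequence of Theorem~\ref{thmstarmat}(iii) together with the known values of ${\rm{ex}}_r(n,{\rm Berge}{\text -}S_\ell)$. First I would take $F=S_\ell$ in Theorem~\ref{thmstarmat}(iii). Since $\ell\ge 2$, the centre of $S_\ell$ is its unique vertex of degree greater than $1$, so in the notation of that theorem we have $w=1$; moreover $r>k\ge 2$ forces $r\ge 3$, hence the two hypotheses $1\le w\le r-1$ and $r>k+w-1=k$ are both satisfied. Theorem~\ref{thmstarmat}(iii) then gives, for all sufficiently large $n$,
\[
{\rm{ex}}_r(n,{\rm Berge}{\text -}S_\ell\cup M_{k-1})={\rm{ex}}_r(n,{\rm Berge}{\text -}S_\ell),
\]
so the whole problem reduces to evaluating the right-hand side.

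Next I would quote the known bounds on ${\rm{ex}}_r(n,{\rm Berge}{\text -}S_\ell)$ from \cite{B4,D4}. For $\ell\le r+1$ one first observes that every Berge-$S_\ell$ forces a vertex lying in at least $\ell$ hyperedges, and conversely any vertex of degree at least $\ell$, together with any $\ell$ hyperedges through it, yields a Berge-$S_\ell$ by Hall's theorem: each such hyperedge contributes $r-1$ candidate leaves, and any two distinct $r$-sets have union of size at least $r+1$, which verifies Hall's condition for every proper subfamily, while for the full family of $\ell$ hyperedges the borderline cases $\ell\in\{r,r+1\}$ are settled by counting the $r$-subsets of an $(r+1)$-set containing a fixed vertex. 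Thus Berge-$S_\ell$-freeness is equivalent to maximum degree at most $\ell-1$, giving $e(H)\le n(\ell-1)/r$, and a near-$(\ell-1)$-regular $r$-graph on $n$ vertices (which exists for $n$ large) attains $\lfloor n(\ell-1)/r\rfloor$. For $\ell>r+1$ the Erd\H{o}s--S\'os condition holds trivially for stars and all of their subtrees, so the tree bound of Gerbner, Methuku and Palmer \cite{B4} yields ${\rm{ex}}_r(n,{\rm Berge}{\text -}S_\ell)\le \tfrac{n}{\ell}\binom{\ell}{r}$, while $\tfrac{n}{\ell}$ vertex-disjoint copies of the complete $r$-graph on $\ell$ vertices form a Berge-$S_\ell$-free $r$-graph (inside each copy only $\ell-1$ vertices are available as leaves of any fixed centre, so no system of distinct representatives exists), proving sharpness when $\ell\mid n$. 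Substituting these values into the displayed identity completes the proof.

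Because Theorem~\ref{thmstarmat}(iii) is already in hand, there is no obstacle of real substance here: the argument is confined to checking the hypotheses of that theorem for $F=S_\ell$ and to recalling the star-Tur\'an results in the two ranges of $\ell$. The only mildly delicate point, should one want a fully self-contained treatment, is the verification of Hall's condition for the entire family of $\ell$ hyperedges through a maximum-degree vertex in the boundary cases $\ell=r$ and $\ell=r+1$; everything else is routine double counting together with the two explicit extremal constructions described above.
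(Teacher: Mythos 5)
Your proposal is correct and follows exactly the route the paper takes: the paper derives Corollary \ref{thmnew12} by applying Theorem \ref{thmstarmat}(iii) with $F=S_\ell$ (so $w=1$ and $r>k$) to reduce to ${\rm{ex}}_r(n,{\rm Berge}{\text -}S_\ell)$, and then quotes the known star bounds from \cite{B4,D4}. Your additional verification of the hypotheses and the sketch of the star bounds (the degree/Hall argument for $\ell\le r+1$ and the disjoint $K_\ell^{(r)}$ construction for $\ell>r+1$) are accurate but not needed beyond the citations.
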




\subsection{\normalsize Linear forests} 
\ \ \ \ 
We now consider Berge linear forests. A \textit{linear forest} is a graph whose connected components are all paths or isolated vertices. First, we determine the exact value of ${\rm{ex}}^{\rm con}_r(n,{\rm Berge}{\text -}P_{\ell_1}\cup\cdots\cup P_{\ell_k})$ when $r$ is small and all path lengths are odd. 

\begin{prop}\label{prop4.1}
Let $k\geq2$ be an integer, $\ell_1\geq\ell_2\geq\cdots\geq\ell_k\geq1$ be odd integers
and $3\leq r\leq \frac{\sum_{i=1}^{k}(\ell_i+1)}{2}-7$. Then for sufficiently large $n$, 
\begin{eqnarray*}
{\rm{ex}}^{\rm con}_r(n,{\rm Berge}{\text -}P_{\ell_1}\cup\cdots\cup P_{\ell_k})&=&{\rm{ex}}^{\rm con}_r(n,{\rm Berge}{\text -}P_{\sum_{i=1}^{k}(\ell_i+1)-1}) \\
&=&\binom{\frac{\sum_{i=1}^{k}(\ell_i+1)}{2}-1}{r-1}\Big(n-\frac{\sum_{i=1}^{k}(\ell_i+1)}{2}+1\Big)+\binom{\frac{\sum_{i=1}^{k}(\ell_i+1)}{2}-1}{r}.
\end{eqnarray*}
\end{prop}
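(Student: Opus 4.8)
The plan is to squeeze ${\rm{ex}}^{\rm con}_r(n,{\rm Berge}{\text -}P_{\ell_1}\cup\cdots\cup P_{\ell_k})$ between ${\rm{ex}}^{\rm con}_r(n,{\rm Berge}{\text -}P_{L-1})$ on both sides, where $L:=\sum_{i=1}^{k}(\ell_i+1)$. Since every $\ell_i$ is odd, every $\ell_i+1$ is even, so $L$ is even; put $m:=L/2-1$. The hypothesis $r\le L/2-7$ is precisely what gives $L-1\ge 2r+13\ge 18$, so Theorem~\ref{lem4.3} applies with $\ell=L-1$; as $L-1$ is odd and $\lfloor(L-2)/2\rfloor=m$, it yields ${\rm{ex}}^{\rm con}_r(n,{\rm Berge}{\text -}P_{L-1})=\binom{m}{r-1}(n-m)+\binom{m}{r}$ for large $n$, which is exactly the displayed right-hand side. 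Thus it suffices to prove ${\rm{ex}}^{\rm con}_r(n,{\rm Berge}{\text -}P_{\ell_1}\cup\cdots\cup P_{\ell_k})={\rm{ex}}^{\rm con}_r(n,{\rm Berge}{\text -}P_{L-1})$.

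For ``$\le$'', I would first note that the path $P_{L-1}$, whose $L$ vertices can be viewed as consecutive blocks of sizes $\ell_1+1,\dots,\ell_k+1$, contains $P_{\ell_1}\cup\cdots\cup P_{\ell_k}$ as a subgraph: deleting the $k-1$ edges between consecutive blocks leaves $\sum_i\ell_i=L-k$ edges forming exactly this vertex-disjoint union of paths. Restricting the bijection witnessing a Berge-$P_{L-1}$ to these $L-k$ edges then shows that any $r$-graph containing a Berge-$P_{L-1}$ also contains a Berge-$(P_{\ell_1}\cup\cdots\cup P_{\ell_k})$. Equivalently, a connected Berge-$(P_{\ell_1}\cup\cdots\cup P_{\ell_k})$-free $r$-graph is connected and Berge-$P_{L-1}$-free, so the first family is contained in the second and ``$\le$'' follows.

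For ``$\ge$'', the plan is to exhibit a connected Berge-$(P_{\ell_1}\cup\cdots\cup P_{\ell_k})$-free $r$-graph $G$ on $n$ vertices with $\binom{m}{r-1}(n-m)+\binom{m}{r}$ edges. Take $G$ with vertex set $A\cup B$, $|A|=m$, $|B|=n-m$, and let $E(G)$ consist of all $r$-subsets $e$ with $|e\cap A|\ge r-1$; then $e(G)=\binom{m}{r}+\binom{m}{r-1}(n-m)$, and since $m\ge r+6>r-1$ (so $A$ is large enough) $G$ is connected, because any two vertices of $B$ lie on a Berge-path of length two through a common $(r-1)$-subset of $A$ and the remaining cases are immediate. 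The heart of the argument is that $G$ has no Berge-$(P_{\ell_1}\cup\cdots\cup P_{\ell_k})$: in any Berge-path of $G$, two consecutive vertices cannot both lie outside $A$, because each hyperedge of $G$ has at most one vertex outside $A$; hence the vertices of a Berge-path on $t+1$ vertices that lie outside $A$ form an independent set in a path on $t+1$ vertices, so at least $\lfloor(t+1)/2\rfloor$ of them lie in $A$. A Berge-$(P_{\ell_1}\cup\cdots\cup P_{\ell_k})$ would be $k$ vertex-disjoint Berge-paths on $\ell_1+1,\dots,\ell_k+1$ vertices, and since each $\ell_i$ is odd this forces at least $\sum_{i=1}^{k}\frac{\ell_i+1}{2}=\frac{L}{2}=m+1$ vertices in $A$, contradicting $|A|=m$. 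Hence ${\rm{ex}}^{\rm con}_r(n,{\rm Berge}{\text -}P_{\ell_1}\cup\cdots\cup P_{\ell_k})\ge e(G)$, and combining with ``$\le$'' and Theorem~\ref{lem4.3} completes the proof.

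The only step requiring genuine care is the Berge-freeness of $G$, and this is exactly where the oddness of the $\ell_i$ enters: for even lengths the same count would only force $\sum_i\ell_i/2=(L-k)/2\le m$ vertices in $A$ and yield no contradiction. A secondary point to keep explicit is that ``$H$ contains a Berge-$F$'' includes an injection of $V(F)$ into $V(H)$, so the Berge-path components above are genuinely vertex-disjoint and their $A$-parts sum as claimed; and $n$ must be taken large enough for Theorem~\ref{lem4.3} to be invoked.
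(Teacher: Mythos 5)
Your proposal is correct and follows essentially the same route as the paper: the upper bound via the containment $P_{\ell_1}\cup\cdots\cup P_{\ell_k}\subseteq P_{\sum_i(\ell_i+1)-1}$ combined with Theorem~\ref{lem4.3}, and the lower bound via the same extremal construction (all $r$-sets meeting a fixed $m$-set $A$, $m=\sum_i(\ell_i+1)/2-1$, in at least $r-1$ vertices). Your independent-set argument showing each Berge-$P_{\ell_i}$ skeleton uses at least $(\ell_i+1)/2$ vertices of $A$ is exactly the (unspelled-out) justification behind the paper's corresponding claim.
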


Combining Theorem \ref{lem2.2}, Theorem \ref{thmstarmat} implies sharp bounds for ${\rm{ex}}_r(n,{\rm Berge}{\text -}P_\ell\cup M_{k-1})$ when $r>k+\ell$ and $\ell>2$. In the case of small $r$, we determine the exact Tur\'{a}n numbers of Berge-$P_{\ell_1}\cup P_{\ell_2}$ for odd integers $\ell_1$ and $\ell_2$, using the above proposition. 

\begin{thm}\label{thmnew14}
Let $r\geq3$ and $n$ be sufficiently large. 
\begin{kst}
\item[\textbf{(i)}] If $\ell$ is an odd integer and $\ell\geq 2r+11$, then 
\begin{eqnarray*}
{\rm{ex}}_r(n,{\rm Berge}{\text -}P_\ell\cup P_{1})=\max\bigg\{{\rm{ex}}_r(n,{\rm Berge}{\text -}P_\ell), \binom{\frac{\ell+1}{2}}{r-1}\Big(n-\frac{\ell+1}{2}\Big)+\binom{\frac{\ell+1}{2}}{r}\bigg\}.
\end{eqnarray*}
\item[\textbf{(ii)}] If $\ell_1,\ell_2$ are odd integers and $\ell_1\geq\ell_2\geq r+6$, then 
\begin{eqnarray*}
{\rm{ex}}_r(n,{\rm Berge}{\text -}P_{\ell_1}\cup P_{\ell_2})=\max\bigg\{{\rm{ex}}_r(n,{\rm Berge}{\text -}P_{\ell_1}), \binom{\frac{\ell_1+\ell_2}{2}}{r-1}\Big(n-\frac{\ell_1+\ell_2}{2}\Big)+\binom{\frac{\ell_1+\ell_2}{2}}{r}\bigg\}.
\end{eqnarray*}
\end{kst}
\end{thm}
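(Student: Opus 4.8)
The plan is to deduce both parts from Proposition \ref{prop4.1} together with the known values of $\mathrm{ex}^{\mathrm{con}}_r(n,\mathrm{Berge}\text-P_\ell)$ (Theorem \ref{lem4.3}) by the standard component-analysis argument for disconnected forbidden subgraphs. The lower bound in each case is immediate: the two natural constructions are (a) an $\mathrm{Berge}\text-P_{\ell_1}$-free $r$-graph achieving $\mathrm{ex}_r(n,\mathrm{Berge}\text-P_{\ell_1})$, which contains no $\mathrm{Berge}\text-P_{\ell_1}\cup P_{\ell_2}$ at all, and (b) a "book-like" construction on $\frac{\ell_1+\ell_2}{2}$ core vertices whose value is the binomial expression displayed; one checks directly it has no Berge copy of $P_{\ell_1}\cup P_{\ell_2}$ because after deleting any Berge path all remaining hyperedges meet the bounded core, and on the core there is no long enough residual Berge path. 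Taking the maximum gives the claimed lower bound.

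For the upper bound, let $H$ be an $n$-vertex $\mathrm{Berge}\text-(P_{\ell_1}\cup P_{\ell_2})$-free $r$-graph with $e(H)$ hyperedges; I want to show $e(H)$ is at most the displayed maximum. First I would pass to a "kernel": iteratively delete vertices of small degree (degree below some threshold $D=D(\ell_1,\ell_2,r)$), which removes only $O(n)$ — indeed at most $Dn$ — hyperedges, and also delete isolated-type low-complexity pieces, arriving at a subhypergraph $H'$ in which every vertex has large degree, hence (for $n$ large) $H'$ contains a Berge copy of $P_{\ell_2}$; in fact by a greedy/BFS argument it contains a Berge copy of $P_{\ell_2}$ that is "richly attached," i.e. many hyperedges of $H'$ avoid its vertex set. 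The key dichotomy is then: either the part of $H'$ disjoint from this fixed $\mathrm{Berge}\text-P_{\ell_2}$ (on the remaining $\ge n-O(1)$ vertices) itself contains a $\mathrm{Berge}\text-P_{\ell_1}$ — which would complete a $\mathrm{Berge}\text-(P_{\ell_1}\cup P_{\ell_2})$, contradiction — or it does not, in which case the hyperedges of $H'$ not using the $\ell_2+1$ fixed vertices form a $\mathrm{Berge}\text-P_{\ell_1}$-free $r$-graph on $n-\ell_2-1$ vertices, bounded by $\mathrm{ex}_r(n-\ell_2-1,\mathrm{Berge}\text-P_{\ell_1})$, and the $O(1)$ fixed vertices contribute only $O(1)$ further hyperedges. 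This already yields $e(H)\le \mathrm{ex}_r(n,\mathrm{Berge}\text-P_{\ell_1})+O(n)$, which is the right order but not sharp, so the argument must be refined.

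To get the exact bound I would instead argue connectedly, exactly as in the proof scheme behind Theorem \ref{lem4.3}: decompose $H$ into Berge-connected components $H_1,\dots,H_m$ and observe that a disconnected forbidden graph $P_{\ell_1}\cup P_{\ell_2}$ is absent iff no single component contains $\mathrm{Berge}\text-P_{\ell_1}$ while some other contains $\mathrm{Berge}\text-P_{\ell_2}$, equivalently: at most one component contains a $\mathrm{Berge}\text-P_{\ell_1}$, and if one does, then all other components are $\mathrm{Berge}\text-P_{\ell_2}$-free, hence (since $\ell_2\ge r+6$, so $\ell_2>r+1$ is not needed but Theorem \ref{lem2.2} applies) each has only $O(1)$ vertices' worth of hyperedges — more precisely the $\mathrm{Berge}\text-P_{\ell_2}$-free components are all small, of bounded size by the linear Erdős–Gallai-type bound once we note a component with $\ge c$ vertices and enough edges contains $\mathrm{Berge}\text-P_{\ell_2}$. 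Thus essentially all hyperedges lie in one component, which is either $\mathrm{Berge}\text-P_{\ell_1}$-free, giving $\le \mathrm{ex}^{\mathrm{con}}_r(n',\mathrm{Berge}\text-P_{\ell_1})$ (and since the $\mathrm{Berge}\text-P_{\ell_1}$-free extremal number is dominated by either its connected version on $n$ vertices or by one big clique-like block, this is $\le \mathrm{ex}_r(n,\mathrm{Berge}\text-P_{\ell_1})+O(1)$), or it is $\mathrm{Berge}\text-P_{\ell_1}$-containing, whence it must be $\mathrm{Berge}\text-(P_{\ell_1}\cup P_{\ell_2})$-free as a connected hypergraph; but a connected hypergraph avoiding $P_{\ell_1}\cup P_{\ell_2}$ with $\ell_1,\ell_2$ odd is, by the odd-length gluing identity $\sum(\ell_i+1)-1$ used in Proposition \ref{prop4.1}, exactly a connected $\mathrm{Berge}\text-P_{\ell_1+\ell_2+1}$-free hypergraph in the relevant range, so its edge count is at most $\mathrm{ex}^{\mathrm{con}}_r(n',\mathrm{Berge}\text-P_{\ell_1+\ell_2+1})=\binom{\frac{\ell_1+\ell_2}{2}}{r-1}\big(n'-\frac{\ell_1+\ell_2}{2}\big)+\binom{\frac{\ell_1+\ell_2}{2}}{r}$ by Proposition \ref{prop4.1} (whose hypothesis $3\le r\le \frac{(\ell_1+1)+(\ell_2+1)}{2}-7$ is precisely $\ell_1\ge\ell_2\ge r+6$ rearranged for $k=2$, using $\ell_1\ge\ell_2$). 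Summing the at-most-$O(1)$ contribution from the small components gives the stated maximum, and part (i) is the special case $\ell_2=1$ where $\mathrm{Berge}\text-P_1$ is just a single hyperedge-with-a-marked-edge, so "some component contains $\mathrm{Berge}\text-P_1$" is automatic and the small-component contribution is literally zero, recovering the cleaner formula.

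The main obstacle I anticipate is the sharpness at the $O(1)$ level: controlling precisely how many hyperedges the "leftover" $\mathrm{Berge}\text-P_{\ell_2}$-free (or $\mathrm{Berge}\text-P_1$-trivial) components can carry, and checking that the connected extremal construction for $\mathrm{Berge}\text-P_{\ell_1+\ell_2+1}$ cannot be beaten by a non-connected configuration — i.e. verifying that spreading hyperedges across two components is never better than putting everything in one. This requires the odd-length reduction $P_{\ell_1}\cup P_{\ell_2}\rightsquigarrow P_{\ell_1+\ell_2+1}$ to be exactly reversible at the level of extremal numbers, which is where the parity hypothesis is essential and where the constant in Proposition \ref{prop4.1} has to be invoked verbatim rather than merely asymptotically.
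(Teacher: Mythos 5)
Your overall strategy---decompose $H$ into connected components, apply Proposition \ref{prop4.1} to the one component that can contain a Berge-$P_{\ell_1}$, and bound the rest via their being Berge-$P_{\ell_2}$-free (resp.\ Berge-$P_1$-free)---is the same as the paper's, and you correctly identify the reduction $P_{\ell_1}\cup P_{\ell_2}\subseteq P_{\ell_1+\ell_2+1}$ behind Proposition \ref{prop4.1}, including the check that $\ell\geq 2r+11$ and $\ell_1\geq\ell_2\geq r+6$ are exactly its hypotheses for $k=2$. But there is a genuine gap in part \textbf{(ii)}: you assert that the Berge-$P_{\ell_2}$-free components are ``all small, of bounded size'' and contribute $O(1)$ hyperedges. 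That is false. The union $H_2$ of those components lives on $n-n_1$ vertices, where $n_1=|V(H_1)|$ need not be close to $n$, and ${\rm{ex}}_r(m,{\rm Berge}{\text -}P_{\ell_2})=\Theta(m)$ (take $\lfloor m/\ell_2\rfloor$ disjoint copies of $K_{\ell_2}^{(r)}$), so $H_2$ can carry $\Theta(n-n_1)$ hyperedges. Consequently the exact bound does not follow from ``one big component plus $O(1)$''; one must compare per-vertex densities. The paper does this via Theorem \ref{lem2.2}\,\textbf{(i)} (applicable since $\ell_2\geq r+6>r+1$), which gives $e(H_2)\leq\binom{\ell_2}{r}\frac{n-n_1}{\ell_2}$, and then the inequality $\binom{\ell_2}{r}\frac{1}{\ell_2}<\binom{\frac{\ell_1+\ell_2}{2}}{r-1}$ (this is where $\ell_1\geq\ell_2$ enters) shows that each vertex placed outside $H_1$ earns strictly fewer hyperedges than it would inside the extremal connected configuration, so $e(H_1)+e(H_2)$ stays below the displayed maximum. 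Without this density comparison your argument yields only the correct order of magnitude, not the exact value.

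Two smaller points. In the case where the large component is Berge-$P_{\ell_1}$-free you conclude only $e(H)\leq{\rm{ex}}_r(n,{\rm Berge}{\text -}P_{\ell_1})+O(1)$, which again destroys exactness; the clean fix, used in the paper, is to assume from the outset that $e(H)$ exceeds the claimed maximum, so in particular $e(H)>{\rm{ex}}_r(n,{\rm Berge}{\text -}P_{\ell_1})$ and $H$ must contain a Berge-$P_{\ell_1}$. Also, applying Proposition \ref{prop4.1} to $H_1$ requires $n_1$ to be sufficiently large, so the case $n_1=O(1)$ must be handled separately (there $e(H_1)\leq\binom{n_1}{r}=O(1)$ and the same density comparison closes the argument). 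Part \textbf{(i)} is fine as you describe it, since there $H_2$ is Berge-$P_1$-free and genuinely has no hyperedges; your opening ``kernel'' paragraph is, as you yourself note, not sharp and can simply be discarded.
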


When $\ell_1=\ell_2>r$, we have $\binom{\ell_1}{r}\frac{1}{\ell_1}<\binom{\frac{\ell_1+\ell_2}{2}}{r-1}$. 
It follows from Theorem \ref{lem2.2}\,\textbf{(i)} that ${\rm{ex}}_r(n,{\rm Berge}{\text -}P_{\ell_1})< \binom{\frac{\ell_1+\ell_2}{2}}{r-1}\big(n-\frac{\ell_1+\ell_2}{2}\big)+\binom{\frac{\ell_1+\ell_2}{2}}{r}$ for sufficiently large $n$. 
Consequently, the above theorem yields the following result. 

\begin{cor}
Let $\ell$ be an odd integer and $\ell\geq r+6\geq9$. Then for sufficiently large $n$, 
\begin{eqnarray*}
{\rm{ex}}_r(n,{\rm Berge}{\text -}2P_\ell)=\binom{\ell}{r-1}(n-\ell)+\binom{\ell}{r}.
\end{eqnarray*}
\end{cor}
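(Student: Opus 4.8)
The plan is to deduce the corollary directly from Theorem~\ref{thmnew14}\,\textbf{(ii)} by specializing $\ell_1=\ell_2=\ell$ and then simplifying the maximum. Setting $\ell_1=\ell_2=\ell$ (both odd, with $\ell\geq r+6\geq9$, so in particular $r\geq3$ and the hypothesis $\ell_1\geq\ell_2\geq r+6$ of Theorem~\ref{thmnew14}\,\textbf{(ii)} is satisfied), the theorem gives, for sufficiently large $n$,
\begin{eqnarray*}
{\rm{ex}}_r(n,{\rm Berge}{\text -}2P_\ell)=\max\bigg\{{\rm{ex}}_r(n,{\rm Berge}{\text -}P_{\ell}),\ \binom{\ell}{r-1}(n-\ell)+\binom{\ell}{r}\bigg\},
\end{eqnarray*}
since $\tfrac{\ell_1+\ell_2}{2}=\ell$. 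It then remains to show that the second term dominates for large $n$.

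The key step is the comparison of the two quantities inside the maximum, which is exactly the content of the paragraph preceding the corollary in the excerpt. Since $\ell>r$ (indeed $\ell\geq r+6$), one has the elementary inequality $\tfrac{1}{\ell}\binom{\ell}{r}<\binom{\ell}{r-1}$: writing $\binom{\ell}{r}=\tfrac{\ell-r+1}{r}\binom{\ell}{r-1}$, the claim $\tfrac{\ell-r+1}{r\ell}<1$ is equivalent to $\ell-r+1<r\ell$, i.e. $\ell(r-1)>1-r$, which holds since $r\geq3$ and $\ell\geq1$. By Theorem~\ref{lem2.2}\,\textbf{(i)} (applicable because $\ell\geq r+6>r+1>3$), we have ${\rm{ex}}_r(n,{\rm Berge}{\text -}P_\ell)\leq \tfrac{n}{\ell}\binom{\ell}{r}$. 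Therefore
\begin{eqnarray*}
{\rm{ex}}_r(n,{\rm Berge}{\text -}P_\ell)\leq \tfrac{n}{\ell}\binom{\ell}{r}<\binom{\ell}{r-1}n-\ell\binom{\ell}{r-1}+\binom{\ell}{r}=\binom{\ell}{r-1}(n-\ell)+\binom{\ell}{r}
\end{eqnarray*}
once $n$ is large enough that the linear gap $\big(\binom{\ell}{r-1}-\tfrac{1}{\ell}\binom{\ell}{r}\big)n$ exceeds the constant $\ell\binom{\ell}{r-1}-\binom{\ell}{r}$; since $\ell,r$ are fixed this holds for all sufficiently large $n$. Hence the maximum equals its second argument, giving ${\rm{ex}}_r(n,{\rm Berge}{\text -}2P_\ell)=\binom{\ell}{r-1}(n-\ell)+\binom{\ell}{r}$ for sufficiently large $n$.

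There is essentially no obstacle here: the corollary is a routine specialization, and the only content beyond invoking Theorem~\ref{thmnew14}\,\textbf{(ii)} is the arithmetic comparison of binomial coefficients together with the upper bound from Theorem~\ref{lem2.2}\,\textbf{(i)}, both of which are already flagged in the text immediately above the corollary. If anything requires a word of care, it is making sure the phrase ``sufficiently large $n$'' is used consistently — one takes $n$ large enough both for Theorem~\ref{thmnew14}\,\textbf{(ii)} to apply and for the strict inequality between the two terms of the maximum to hold — but since finitely many fixed-$\ell,r$ thresholds are being combined, a single sufficiently large lower bound on $n$ works.
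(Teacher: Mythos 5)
Your proposal is correct and follows exactly the paper's route: the corollary is obtained by specializing Theorem~\ref{thmnew14}\,\textbf{(ii)} to $\ell_1=\ell_2=\ell$ and then eliminating the first term of the maximum via the inequality $\frac{1}{\ell}\binom{\ell}{r}<\binom{\ell}{r-1}$ (valid since $\ell>r$) combined with the bound ${\rm{ex}}_r(n,{\rm Berge}{\text -}P_\ell)\leq \frac{n}{\ell}\binom{\ell}{r}$ from Theorem~\ref{lem2.2}\,\textbf{(i)}, which is precisely the argument the paper gives in the paragraph preceding the corollary.
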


The rest of this paper is organized as follows. In Section 3, we provide the proofs of Theorems \ref{thm1} and \ref{thmstarmat}, as well as the proof of a more general theorem that includes Theorems \ref{thm2} and \ref{thm21}. The proofs of Proposition \ref{prop4.1} and Theorem \ref{thmnew14} are presented in Section 4. 

\section{\normalsize Forests with star components} 
First, let us present a result that will be used in the proof. Khormali and Palmer \cite{D4} proved the following lemma that establish degree conditions for the existence of a Berge-$S_\ell$. 

\begin{lem}[Khormali and Palmer \cite{D4}]\label{lem2.3}
\begin{kst}
\item[\textbf{(i)}] Fix integers $\ell>r\geq2$ and let $H$ be an $r$-graph. If $x$ is a vertex of degree $d(x)>\binom{\ell-1}{r-1}$ in $H$, then $H$ contains a Berge-$S_\ell$ with center $x$.
\item[\textbf{(ii)}] Fix integers $r\geq2$ and $\ell\leq r$ and let $H$ be an $r$-graph. If $x$ is a vertex of degree $d(x)>\ell-1$ in $H$, then $H$ contains a Berge-$S_\ell$ with center $x$.
\end{kst}
\end{lem}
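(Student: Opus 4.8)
The plan is to recast the existence of a Berge-$S_\ell$ with center $x$ as a matching problem in an auxiliary bipartite graph and then apply the deficiency version of Hall's theorem. Let $A$ be the set of hyperedges of $H$ that contain $x$, so $|A|=d(x)$, put $B=V(H)\setminus\{x\}$, and let $G$ be the bipartite graph on parts $A$ and $B$ in which $f\in A$ is adjacent to $v\in B$ precisely when $v\in f$; thus every vertex of $A$ has degree $r-1$ in $G$. A matching of size $\ell$ in $G$ amounts to a choice of $\ell$ distinct hyperedges $f_1,\dots,f_\ell$ through $x$ together with $\ell$ distinct vertices $v_1,\dots,v_\ell\neq x$ with $v_i\in f_i$; mapping the $i$-th edge of $S_\ell$ to $f_i$, the $i$-th leaf to $v_i$, and the center to $x$ then exhibits a Berge-$S_\ell$ with center $x$ inside $H$. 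So it suffices to show $G$ has a matching of size $\ell$; since the maximum matching size in $G$ equals $|A|-\max_{S\subseteq A}\bigl(|S|-|N_G(S)|\bigr)$, this is equivalent to the inequality $|S|-|N_G(S)|\le d(x)-\ell$ holding for every $S\subseteq A$.

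The key input is the estimate $|S|\le\binom{|N_G(S)|}{r-1}$, valid since each $f\in S$ has $f\setminus\{x\}\subseteq N_G(S)$ with $|f\setminus\{x\}|=r-1$, and the sets $f\setminus\{x\}$ ($f\in S$) are pairwise distinct because the hyperedges of $H$ are. Writing $u=|N_G(S)|$ and also using $|S|\le d(x)$, I would split on $u$. If $u\ge\ell$, then $|S|-u\le d(x)-u\le d(x)-\ell$. If $u\le\ell-1$, I would bound $|S|-u\le g(u):=\binom{u}{r-1}-u$ and invoke three elementary observations: $g(u)\le0$ whenever $u\le r-1$; $g$ is nondecreasing on $u\ge r-1$ because $g(u+1)-g(u)=\binom{u}{r-2}-1\ge0$ there; and $|S|\ge1$ forces $u\ge r-1$. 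These give $|S|-u\le\max\{0,g(\ell-1)\}$. In case (i), where $\ell>r$, one has $g(\ell-1)=\binom{\ell-1}{r-1}-(\ell-1)\ge0$, and $g(\ell-1)\le d(x)-\ell$ is exactly the hypothesis $d(x)>\binom{\ell-1}{r-1}$. In case (ii), where $\ell\le r$, the range $u\le\ell-1\le r-1$ makes $g(u)\le0$, so $|S|-u\le0\le d(x)-\ell$, using $d(x)>\ell-1$. In either case the required inequality holds, so $G$ has a matching of size $\ell$ and we are done.

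I expect the main obstacle to be the boundary bookkeeping around $u\approx r-1$ rather than anything conceptual: one has to check the monotonicity of $\binom{u}{r-1}-u$ on $u\ge r-1$ and correctly use the implication $|S|\ge1\Rightarrow|N_G(S)|\ge r-1$, so that the off-by-one constants line up exactly with the sharp hypotheses $d(x)>\binom{\ell-1}{r-1}$ and $d(x)>\ell-1$. Everything else is a routine application of the deficiency Hall theorem, and the degenerate cases --- in particular $r=2$, for which $g\equiv0$, and the smallest values of $\ell$ --- are absorbed by the same inequalities.
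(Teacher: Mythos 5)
The paper does not prove this lemma at all --- it is quoted verbatim from Khormali and Palmer \cite{D4} and used as a black box --- so there is no in-paper argument to compare against. Your proof is correct and complete as a self-contained argument. The reduction to a size-$\ell$ matching in the bipartite incidence graph between the $d(x)$ hyperedges through $x$ and the vertices of $V(H)\setminus\{x\}$ is exactly the right reformulation of ``Berge-$S_\ell$ centered at $x$,'' and the deficiency form of Hall's theorem together with the bound $|S|\le\binom{|N_G(S)|}{r-1}$ (distinct hyperedges give distinct $(r-1)$-sets $f\setminus\{x\}$) does the rest. I checked the boundary cases: nonempty $S$ forces $|N_G(S)|\ge r-1$; $g(u)=\binom{u}{r-1}-u$ is indeed nondecreasing for $u\ge r-1$ since $\binom{u}{r-2}\ge 1$ there; the empty set $S$ is covered because $d(x)\ge\ell$ follows from either hypothesis (using $\binom{\ell-1}{r-1}\ge\ell-1$ in case (i)); and the constants match the sharp thresholds $d(x)>\binom{\ell-1}{r-1}$ and $d(x)>\ell-1$ exactly. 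The original Khormali--Palmer argument is a greedy extension with a counting step (if a partial Berge star cannot be extended, every remaining hyperedge through $x$ sits inside the current leaf set, which caps $d(x)$); your matching-theoretic route is a clean alternative that buys a uniform treatment of both cases (i) and (ii) at the cost of invoking the K\H{o}nig--Ore defect formula rather than only elementary counting.
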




Let $H$ be an $r$-graph and $V'\subseteq V(H)$ be a nonempty subset. Let $H[V']$ denote the subhypergraph of $H$ induced by $V'$. Given a hypergraph $H$ and a vertex $v\in V(H)$, the link hypergraph $H_v$ is defined as
\begin{eqnarray*}
H_v=\big\{e\backslash\{v\}\,\big|\,e\in E(H), v\in e\big\}.
\end{eqnarray*}

Now let us start with the proof of Theorem \ref{thm1} that we restate here for convenience. 

\begin{thm*}
Let integers $k\geq2$, $\ell\geq1$, $1\leq i\leq \ell+1$ and $2\leq r\leq k+\ell-1$. Suppose that ${\rm{ex}}_p(n,{\rm Berge}{\text -}T_\ell)\leq \binom{\ell}{p}\frac{n}{\ell}$ for each $2\le p\le r$, in particular, the Erd\H os-S\'os conjecture holds for $T_\ell$. Then for sufficiently large $n$, 
\begin{eqnarray*}
{\rm{ex}}_r(n,{\rm Berge}{\text -}T_\ell\cup (k-1)S_i)\leq \bigg(\binom{\ell+k-1}{r}-\binom{k-1}{r}\bigg)\bigg\lceil\frac{n-k+1}{\ell}\bigg\rceil+\binom{k-1}{r}. 
\end{eqnarray*}
Moreover, let $0\leq t\leq k-1$. If $\ell\,|\,n-k+1$, then 
\begin{eqnarray*}
{\rm{ex}}_r(n,{\rm Berge}{\text -}T_\ell\cup tS_\ell\cup (k-1-t)S_{\ell+1})= \bigg(\binom{\ell+k-1}{r}-\binom{k-1}{r}\bigg)\frac{n-k+1}{\ell}+\binom{k-1}{r}. 
\end{eqnarray*}
\end{thm*}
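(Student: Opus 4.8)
The plan is to prove the two parts separately. For the upper bound, suppose $H$ is an $n$-vertex $r$-graph with more than the claimed number of hyperedges that is Berge-$(T_\ell\cup(k-1)S_i)$-free; I aim for a contradiction. The first step is to locate a ``core'' of $k-1$ vertices of very high degree. Concretely, for $n$ large, the edge count forces many vertices of degree exceeding $\binom{\ell+k-2}{r-1}$; by Lemma \ref{lem2.3}(i) (or (ii) when $\ell+k-1\le r$, using that $r\le k+\ell-1$ puts us exactly on the boundary), each such vertex is the center of a Berge-$S_{\ell+k-1}$. Iteratively: if there are $k-1$ vertices $v_1,\dots,v_{k-1}$ each of degree larger than roughly $(k-2)\binom{\ell+k-1}{r-2}+\binom{\ell+k-1}{r-1}$ plus a bounded term, then one can greedily build a Berge-$S_i$ centered at $v_1$, then one centered at $v_2$ avoiding the hyperedges already used and the vertices of the first star, and so on, since at each stage only a bounded number of hyperedges and vertices are forbidden while the degree is still large. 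So the obstruction to finding $k-1$ disjoint Berge-stars plus a Berge-$T_\ell$ must come from having fewer than $k-1$ high-degree vertices.

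The second step is the counting estimate. Let $W$ be the set of vertices of degree $>\binom{\ell+k-1}{r-1}$ (say); by the above we may assume $|W|\le k-2$, but more carefully one shows $|W|\le k-1$ is the extremal situation and the hyperedges split into those inside $W$ — at most $\binom{|W|}{r}\le\binom{k-1}{r}$ of them — and those meeting $V(H)\setminus W$ in at least one vertex. Delete $W$: the remaining hypergraph $H'=H[V(H)\setminus W]$ has all degrees bounded, and one needs that $H'$ has no Berge-$T_\ell$, else combining it with disjoint Berge-stars centered in $W$ (which exist because the $W$-vertices have huge degree into $V(H)\setminus W$, and $T_\ell$ uses only boundedly many vertices) gives the forbidden Berge-$T_\ell\cup(k-1)S_i$. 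Hence $e(H')\le \binom{\ell}{r}n/\ell$ by hypothesis — but we need the stronger structural fact that the hyperedges touching $V(H)\setminus W$ number at most $\big(\binom{\ell+k-1}{r}-\binom{k-1}{r}\big)\lceil (n-k+1)/\ell\rceil$. This comes from applying the Berge-$T_\ell$-free assumption not to $H'$ but to the ``trace'': each hyperedge $e$ of $H$ with $e\cap(V(H)\setminus W)\neq\emptyset$ is recorded by $e\setminus W$, a set of size between $r-|W|$ and $r$ on the ground set $V(H)\setminus W$ of size $n-|W|\ge n-k+1$; grouping these $p$-subsets for each $p$ with $r-(k-1)\le p\le r$ and invoking ${\rm ex}_p(n,{\rm Berge}\text{-}T_\ell)\le\binom{\ell}{p}n/\ell$ on each layer, then summing $\sum_{p}\binom{k-1}{r-p}\binom{\ell}{p}=\binom{\ell+k-1}{r}$ via Vandermonde and subtracting the all-in-$W$ term $\binom{k-1}{r}$, yields exactly the target bound (the ceiling absorbing divisibility). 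The main obstacle here is making the layer-by-layer Berge-$T_\ell$-free argument legitimate: one must check that a Berge-$T_\ell$ in a layer (a $p$-graph on $V(H)\setminus W$) lifts to a Berge-$T_\ell$ in $H$ using the original hyperedges, which is immediate since $e\setminus W\subseteq e$, so the same skeleton embedding works.

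For the lower bound (and hence equality when $\ell\mid n-k+1$), I would exhibit the extremal construction: take $k-1$ special vertices $u_1,\dots,u_{k-1}$, partition the remaining $n-k+1$ vertices into $(n-k+1)/\ell$ groups of size $\ell$, and on each group together with all of $\{u_1,\dots,u_{k-1}\}$ — a set of size $\ell+k-1$ — put \emph{all} $r$-subsets that are not entirely inside $\{u_1,\dots,u_{k-1}\}$; additionally put all $\binom{k-1}{r}$ hyperedges inside $\{u_1,\dots,u_{k-1}\}$. This has exactly $\big(\binom{\ell+k-1}{r}-\binom{k-1}{r}\big)(n-k+1)/\ell+\binom{k-1}{r}$ hyperedges. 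The verification that it is Berge-$(T_\ell\cup tS_\ell\cup(k-1-t)S_{\ell+1})$-free is the delicate combinatorial point: any Berge copy of $T_\ell\cup tS_\ell\cup(k-1-t)S_{\ell+1}$ would need $k$ vertex-disjoint nonempty subtrees, but $T_\ell$ has $\ell+1$ vertices and $S_\ell$ has $\ell+1$ vertices while $S_{\ell+1}$ has $\ell+2$; since the only ``connecting'' vertices across groups are the $u_j$'s, every component of the Berge-forest that reaches more than one group must use a $u_j$, and one has to argue a pigeonhole on the $k-1$ available $u_j$'s against the $k$ components together with a degree/size count in each group (a group has only $\ell$ non-special vertices, too few to host an $S_\ell$ without a special vertex since that star needs $\ell$ leaves plus a center), forcing one component to be confined to a single group of $\ell$ vertices — but none of $T_\ell,S_\ell,S_{\ell+1}$ embeds as a Berge copy into $\ell+k-1$ vertices with the forbidden all-special hyperedges removed in a way that stays disjoint from the others. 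This case analysis, parametrized by $t$, is where I expect the real work to lie; the choice of star sizes $\ell$ versus $\ell+1$ is exactly calibrated so the construction remains tight for every $t$.
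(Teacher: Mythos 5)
Your lower-bound construction is exactly the paper's extremal example $H^*$, and your verification idea (a connected skeleton on $\ell+1$ vertices cannot avoid the $k-1$ special vertices, because every hyperedge lies inside the union of the special set with a single group of only $\ell$ other vertices) is the right one, though you present it as an open-ended case analysis rather than the one-line pigeonhole it is. Your layered trace argument for the upper bound --- recording each hyperedge by $e\setminus W$, bounding the number of distinct $p$-traces by ${\rm{ex}}_p(n-k+1,{\rm Berge}{\text -}T_\ell)\leq \binom{\ell}{p}\frac{n-k+1}{\ell}$, multiplying by the multiplicity $\binom{k-1}{r-p}$, and summing via Vandermonde --- is a clean repackaging of the paper's Subcase 2.1 (the paper instead isolates a single index $j$ with $|E_j|>|E^*_j|$ and works with that one layer), and it is valid \emph{provided} there are $k-1$ vertices of very large degree to serve as star centers once some layer yields a Berge-$T_\ell$.

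The genuine gap is the complementary case, which your write-up elides: when fewer than $k-1$ vertices have degree above your threshold. Then you cannot center $k-1$ disjoint Berge-stars inside $W$, and your fallback claim that ``the edge count forces many vertices of degree exceeding $\binom{\ell+k-2}{r-1}$'' does not follow from the edge count alone: the at most $k-2$ vertices of $W$ have degrees that are a priori as large as $\binom{n-1}{r-1}$, so they can absorb essentially all of $\sum_v d_H(v)=r\cdot e(H)$, leaving no usable lower bound on the average degree outside $W$. This is precisely why the paper's proof is an induction on $r$: for each $v\in V_0$ the link hypergraph $H_v$ is an $(r-1)$-graph which must be Berge-$(T_\ell\cup(k-2)S_{\ell+1})$-free (otherwise the forbidden configuration is completed by a large Berge-star centered at $v$), so the induction hypothesis bounds $d_H(v)=e(H_v)$ by roughly $\binom{\ell+k-2}{r-1}\frac{n}{\ell}$; only after this does averaging produce $\Omega(n)$ vertices of degree greater than $\binom{\ell+k-2}{r-1}$ outside $V_0$, hence $k-1$ pairwise disjoint Berge-$S_{\ell+1}$'s there, and finally a Berge-$T_\ell$ in what remains. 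Your proposal contains neither this induction on $r$ nor any substitute for it, so the case $|W|<k-1$ is unproved; everything else is essentially in order.
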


\begin{proof}[{\bf Proof}]
For the lower bound, we consider the following $r$-graph $H^*$. Let $A^*$ be a set of $k-1$ vertices and $B^*$ be a set of $n-k+1$ vertices. Partition the vertices of $B^*$ into $\lfloor\frac{n-k+1}{\ell}\rfloor$ classes of size $\ell$ and a class of size $n-k+1-\ell\lfloor\frac{n-k+1}{\ell}\rfloor$. For each partition class $X$ of $B^*$, we form a complete $r$-graph $K_{\ell+k-1}^{(r)}$ or $K_{n-\ell\lfloor\frac{n-k+1}{\ell}\rfloor}^{(r)}$ on the vertex set $A^*\cup X$.

Since the skeleton of any Berge-$T_\ell$ or Berge-$S_\ell$ contains $\ell+1$ vertices, the skeleton of a Berge-$T_\ell$ or Berge-$S_\ell$ in $H^*$ contains at least one vertex of $A^*$. This implies that $H^*$ is Berge-$T_\ell\cup (k-1)S_\ell$-free as $|A^*|<k$, hence $H^*$ is also Berge-$T_\ell\cup tS_\ell\cup (k-1-t)S_{\ell+1}$-free. By the definition of $H^*$, we have
\begin{eqnarray*}
e(H^*)=\bigg(\binom{\ell+k-1}{r}-\binom{k-1}{r}\bigg)\bigg\lfloor\frac{n-k+1}{\ell}\bigg\rfloor+\binom{n-\ell\lfloor\frac{n-k+1}{\ell}\rfloor}{r}.
\end{eqnarray*}

We now continue with the upper bound. Note that $T_\ell\cup (k-1)S_{i}$ is a subgraph of $T_\ell\cup (k-1)S_{\ell+1}$ for any $1\leq i\leq \ell$. 
It is sufficient to consider only the case when $\ell\,|\,n-k+1$. In fact, if $\ell\,\nmid\,n-k+1$, then let $n'=k-1+\ell\lceil\frac{n-k+1}{\ell}\rceil$. Clearly, $\ell\,|\,n'-k+1$. Since ${\rm{ex}}_r(n,{\rm Berge}{\text -}T_\ell\cup (k-1)S_{\ell+1})\leq {\rm{ex}}_r(n',{\rm Berge}{\text -}T_\ell\cup (k-1)S_{\ell+1})$, we only need to show that ${\rm{ex}}_r(n',{\rm Berge}{\text -}T_\ell\cup (k-1)S_{\ell+1})\leq \big(\binom{\ell+k-1}{r}-\binom{k-1}{r}\big)\frac{n'-k+1}{\ell}+\binom{k-1}{r}$.

Let $H$ be an $r$-graph on $n$ vertices with
\begin{eqnarray*}
e(H)>e(H^*)=\bigg(\binom{\ell+k-1}{r}-\binom{k-1}{r}\bigg)\frac{n-k+1}{\ell}+\binom{k-1}{r}.
\end{eqnarray*}
We will show that $H$ contains a ${\rm Berge}{\text -}T_\ell\cup (k-1)S_{\ell+1}$. Let $d:=d(\ell,k,r)$ be a large enough fixed constant, and set $V_0=\big\{v\in V(H)\,\big|\,d_{H}(v)>d\big\}$. 
We proceed by induction on $r$.

Let us consider the case $r=2$. This is essentially a theorem of Fang and Yuan \cite{10-10A4}, but we add the proof for the sake of completeness, and because the proof for larger $r$ follows the same line of thought. If $|V_0|<k-1$, then let $B=V(H)\backslash V_0$. Since $n$ is sufficiently large,
\begin{eqnarray*}
e(H[B])&\geq& e(H)-\binom{|V_0|}{2}-|V_0|(n-|V_0|) \\
&\geq& e(H)-\binom{k-2}{2}-(k-2)(n-k+2) \\
&>& \frac{(\ell-1)n}{2} \\
&\geq& {\rm{ex}}(n,T_\ell),
\end{eqnarray*}
which implies $H[B]$ contains a copy of $T_\ell$, denoted by $T_\ell$. On the other hand,
\begin{eqnarray*}
\sum_{v\in B}d_{H[B]}(v)&=& 2e(H[B]) \\
&\geq& 2\bigg(e(H)-\binom{|V_0|}{2}-|V_0|(n-|V_0|)\bigg) \\
&>& (\ell+2k-3-2|V_0|)n-(\ell+2k-3)(k-1)+2|V_0|^2 \\
&\geq& (\ell+1)n-(\ell+2k-3)(k-1)+2|V_0|^2.
\end{eqnarray*}
So $\frac{\sum_{v\in B}d_{H[B]}(v)}{n-|V_0|}\geq \ell+\epsilon_1$ for some $\epsilon_1>0$.
Let $a_1$ be the number of vertices of degree at most $\ell$ in $B$ within $H[B]$. Then $\sum_{v\in B}d_{H[B]}(v)\leq a_1\ell+(n-|V_0|-a_1)d$. Thus,
\begin{eqnarray*}
a_1\leq \frac{d-\ell-\epsilon_1}{d-\ell}(n-|V_0|)=(1-\epsilon_1')(n-|V_0|),
\end{eqnarray*}
where $\epsilon_1':=\frac{\epsilon_1}{d-\ell}$. Clearly, $0<\epsilon_1'<1$ as $d$ is large enough. So the number of vertices of degree greater than $\ell$ in $B$ within $H[B]$ is at least $\epsilon_1'(n-|V_0|)=\Omega(n)$.
For each vertex $v$ of degree greater than $\ell$ in $B$ within $H[B]$, there is a $S_{\ell+1}$ with center $v$. Then there are $\Omega(n)$ $S_{\ell+1}$ with centers in $B$ within $H[B]$. Note that $T_\ell$ or $S_{\ell+1}$ in $H[B]$ has at most $d(\ell+2)$ neighbors as $d_{H}(v)\leq d$ for any $v\in B$. Since $n$ is sufficiently large, we may find $k-1$ vertex-disjoint copies of $S_{\ell+1}$ in $H[B]$ which are disjoint from $V(T_\ell)$. This implies that there is a copy of $T_\ell\cup (k-1)S_{\ell+1}$ in $H$.

If $|V_0|\geq k-1$, then let $A=\{u_1,\dots,u_{k-1}\}$ be a $(k-1)$-subset of $V_0$ and $B=V(H)\backslash A$. Since $n$ is sufficiently large,
\begin{eqnarray*}
e(H[B])&\geq& e(H)-\binom{k-1}{2}-(k-1)(n-k+1) \\
&>& \frac{(\ell-1)(n-k+1)}{2} \\
&\geq& {\rm{ex}}(n-k+1,T_\ell),
\end{eqnarray*}
which implies $H[B]$ contains a copy of $T_\ell$, denoted by $T_\ell$. Note that for any $u_i\in A$, we have $d_{H}(u_i)>d$. Since $d$ is large enough, there exists a $S_{\ell+1}$ with center $u_1$ in $H$ that is disjoint from both $A\backslash\{u_1\}$ and $V(T_\ell)$. Now suppose that we have identified a $(k-2)S_{\ell+1}$ in $H$ that is disjoint from $\{u_{k-1}\}\cup V(T_\ell)$, where $u_1,\ldots,u_{k-2}$ are the centers of $k-2$ stars, respectively. Since $d$ is large enough, we have $d_{H}(u_{k-1})>d>(k-1)({\ell+2})+\ell$. Thus, there is a $S_{\ell+1}$ with center $u_{k-1}$ in $H$ that is vertex-disjoint from $T_\ell$ and $(k-2)S_{\ell}$. This implies that there is a copy of $T_\ell\cup (k-1)S_{\ell+1}$ in $H$.

Now let $r\geq3$ and assume that the upper bound holds for any $2\leq r'<r$. 

\smallskip
{\bf{Case 1.}} $|V_0|<k-1$.
\smallskip
\smallskip

Let $B=V(H)\backslash V_0$. We may suppose that $H_v$ is a Berge-$T_\ell\cup (k-2)S_{\ell+1}$-free $(r-1)$-graph for any $v\in V_0$. Otherwise, assume that
$H_w$ contains a copy of $(r-1)$-uniform Berge-$T_\ell\cup (k-2)S_{\ell+1}$ for some $w\in V_0$. Since the skeleton of this $(r-1)$-uniform Berge-$T_\ell\cup (k-2)S_{\ell+1}$ does not contain the vertex $w$, $H$ contains a copy of Berge-$T_\ell\cup (k-2)S_{\ell+1}$. Let us remove the $(k-2)(\ell+1)+\ell$ hyperedges of the Berge-$T_\ell\cup (k-2)S_{\ell+1}$ from $H$ and let this resulting hypergraph be $H'$. Since $d$ is large enough, 
by Lemma \ref{lem2.3}, there is a Berge-$S_{(k-1)(\ell+2)+\ell}$ with center $w$ in $H'$. This implies that this Berge-$S_{(k-1)(\ell+2)+\ell}$ is hyperedge-disjoint from the Berge-$T_\ell\cup (k-2)S_{\ell+1}$ in $H$.
Note that the skeleton of the Berge-$T_\ell\cup (k-2)S_{\ell+1}$ contains $(k-2)(\ell+2)+\ell+1$ vertices and does not contain the vertex $w$. Therefore, there is a Berge-$S_{\ell+1}$ in $H'$ whose skeleton is disjoint from the skeleton of the Berge-$T_\ell\cup (k-2)S_{\ell+1}$, as at most $(k-2)(\ell+2)+\ell+1$ vertices of the skeleton of this Berge-$S_{(k-1)(\ell+2)+\ell}$ are shared with the skeleton of the Berge-$T_\ell\cup (k-2)S_{\ell+1}$. Thus, $H$ contains a copy of Berge-$T_\ell\cup (k-1)S_{\ell+1}$ and we are done. 

Note that $|V(H_u)|\leq n-1$ for any $u\in V_0$. According to the inductive assumption, we have
\begin{eqnarray*}
d_{H}(u)=e(H_u)&\leq& \bigg(\binom{\ell+k-2}{r-1}-\binom{k-2}{r-1}\bigg)\frac{|V(H_u)|-k+2}{\ell}+\binom{k-2}{r-1} \notag \\
&\leq& \bigg(\binom{\ell+k-2}{r-1}-\binom{k-2}{r-1}\bigg)\frac{n-k+1}{\ell}+\binom{k-2}{r-1}.
\end{eqnarray*}
Thus
\begin{eqnarray*}
&{}&\sum_{v\in V(H)}d_{H}(v)= \sum_{v\in V_0}d_{H}(v)+\sum_{v\in B}d_{H}(v) \notag \\
&{}&\leq |V_0|\cdot\bigg[\bigg(\binom{\ell+k-2}{r-1}-\binom{k-2}{r-1}\bigg)\frac{n-k+1}{\ell}+\binom{k-2}{r-1}\bigg]+(n-|V_0|)d_H(B),
\end{eqnarray*}
where $d_H(B)$ denote the average degree of the vertices in $B$.

Recall that $e(H)>e(H^*)$ and $\sum_{v\in V(H)}d_{H}(v)=r\cdot e(H)$ for any $r$-graph $H$. Then
\begin{eqnarray*}
&{}&\sum_{v\in V(H)}d_{H}(v)=r\cdot e(H)>r\cdot e(H^*)=\sum_{v\in V(H^*)}d_{H^*}(v)= \sum_{v\in A^*}d_{H^*}(v)+\sum_{v\in B^*}d_{H^*}(v) \notag \\
&{}&= (k-1)\bigg[\bigg(\binom{\ell+k-2}{r-1}-\binom{k-2}{r-1}\bigg)\frac{n-k+1}{\ell}+\binom{k-2}{r-1}\bigg]+(n-k+1)\binom{\ell+k-2}{r-1}.
\end{eqnarray*}

Since $n$ is sufficiently large and $|V_0|<k-1$, by comparing the coefficients of $n$ in the above two inequalities, we get $d_{H}(B)>\binom{\ell+k-2}{r-1}$. This implies that $d_{H}(B)\geq \binom{\ell+k-2}{r-1}+\epsilon_2$ for some constant $\epsilon_2>0$. Therefore,
\begin{eqnarray}
\sum_{v\in B}d_{H}(v)\geq (n-|V_0|)\bigg(\binom{\ell+k-2}{r-1}+\epsilon_2\bigg).
\end{eqnarray}

Let $a_2$ be the number of vertices of degree at most $\binom{\ell+k-2}{r-1}$ in $B$. Then
\begin{eqnarray}
\sum_{v\in B}d_{H}(v)\leq a_2\binom{\ell+k-2}{r-1}+(n-|V_0|-a_2)d.
\end{eqnarray}
Combining (3.1) and (3.2) and solving for $a_2$, we obtain
\begin{eqnarray*}
a_2\leq \frac{d-\binom{\ell+k-2}{r-1}-\epsilon_2}{d-\binom{\ell+k-2}{r-1}}(n-|V_0|)=(1-\epsilon_2')(n-|V_0|),
\end{eqnarray*}
where $\epsilon_2':=\frac{\epsilon_2}{d-\binom{\ell+k-2}{r-1}}$. Clearly, $0<\epsilon_2'<1$ as $d$ is large enough. So the number of vertices of degree greater than $\binom{\ell+k-2}{r-1}$ in $B$ is $\epsilon_2'(n-|V_0|)=\Omega(n)$.

For each vertex $v$ of degree greater than $\binom{\ell+k-2}{r-1}$ in $B$, there is a Berge-$S_{\ell+k-1}$ with center $v$ by Lemma \ref{lem2.3}. As $|V_0|<k-1$, there is a Berge-$S_{\ell+1}$ with center $v$ in $H$ whose skeleton is disjoint from $V_0$. Thus, there are $\Omega(n)$ Berge-$S_{\ell+1}$ with centers in $B$, each of whose skeletons is disjoint from $V_0$. Since $n$ is sufficiently large and $d_{H}(v)\leq d$ for any $v\in B$, we may find $k-1$ hyperedge-disjoint copies of Berge-$S_{\ell+1}$ in $H$ whose skeletons are all in $B$ and are pairwise disjoint. This implies that there is a Berge-$(k-1)S_{\ell+1}$ in $H$ whose skeleton is in $B$.
Now we delete the $(k-1)(\ell+2)$ vertices from the skeleton, as well as at most $(k-1)(\ell+2)d$ associated hyperedges. Denote by $H''$ this resulting $r$-graph. Then
$$e(H'')>\bigg(\binom{\ell+k-1}{r}-\binom{k-1}{r}\bigg)\frac{n-k+1}{\ell}+\binom{k-1}{r}-(k-1)(\ell+2)d.$$

Since $\binom{\ell+k-1}{r}-\binom{k-1}{r}>\binom{\ell}{r}$ and $n$ is sufficiently large, by assumption, we have 
\begin{eqnarray*}
e(H'')> \binom{\ell}{r}\frac{n}{\ell}\geq {\rm{ex}}_r(n,{\rm Berge}{\text -}T_\ell),
\end{eqnarray*}
which implies $H''$ contains a copy of Berge-$T_\ell$. Thus, by the definition of $H''$, there is a Berge-$T_\ell\cup (k-1)S_{\ell+1}$ in $H$.

\smallskip
{\bf{Case 2.}} $|V_0|\geq k-1$.
\smallskip
\smallskip

Let $A=\{u_1,\dots,u_{k-1}\}$ be a $(k-1)$-subset of $V_0$ and $B=V(H)\backslash A$. We now distinguish two cases.

\smallskip
{\bf{Subcase 2.1.}} $e(H[B])\leq e(H^*[B])$.
\smallskip
\smallskip

For $1\leq i\leq \min\{r,k-1\}$, let $E_i$ be the set of hyperedges of $H$ intersecting $A$ in exactly $i$ vertices and $E^*_i$ be the set of hyperedges of $H^*$ intersecting $A^*$ in exactly $i$ vertices. By the definition of $H^*$, observe that when $k-1\geq r$, $H^*[A^*]$ is a complete $r$-graph on $k-1$ vertices. When $k-1\geq r-1$, each vertex of $B^*$ is contained in a hyperedge with every $(r-1)$-subset of $A^*$. Therefore, we obtain $|E_r|\leq|E^*_r|$ if $k-1\geq r$, and $|E_{r-1}|\leq|E^*_{r-1}|$ if $k-1\geq r-1$. Meanwhile, we have $E_r=E^*_r=\emptyset$ if $k-1<r$, and $E_{r-1}=E^*_{r-1}=\emptyset$ if $k-1<r-1$. Since
\begin{eqnarray*}
e(H[B])+\sum_{i=1}^{\min\{r,k-1\}}|E_i|=e(H)>e(H^*)=e(H^*[B^*])+\sum_{i=1}^{\min\{r,k-1\}}|E^*_i|,
\end{eqnarray*}
there exists some $1\leq j\leq r-2$ such that $|E_j|>|E^*_j|$.

Define the multi-set $E_j(r-j)=\{e\backslash A\,|\,e\in E_j\}$. Clearly, $|E_j(r-j)|=|E_j|$. Note that the members of $E_j(r-j)$ have a multiplicity of at most $\binom{k-1}{j}$. Let $H(r-j)$ denote the $(r-j)$-graph obtained by removing all but one copies of the repeated hyperedges from $E_j(r-j)$. By the definition of $H^*$, we have $|E^*_j|=\binom{k-1}{j}\frac{n-k+1}{\ell}\binom{\ell}{r-j}$. Then
\begin{eqnarray*}
e(H(r-j))&\geq& \frac{|E_j(r-j)|}{\binom{k-1}{j}}=\frac{|E_j|}{\binom{k-1}{j}} \\
&>& \frac{|E^*_j|}{\binom{k-1}{j}}=\frac{n-k+1}{\ell}\binom{\ell}{r-j} \\
&\geq& {\rm{ex}}_{r-j}(n-k+1,{\rm Berge}{\text -}T_\ell)
\end{eqnarray*}
by assumption. This implies that there is an $(r-j)$-uniform Berge-$T_\ell$ in $H(r-j)$ on $B$. Since each hyperedge of $H(r-j)$ is contained in a hyperedge of $H$, there is a Berge-$T_\ell$ in $H$ whose skeleton is contained in $B$.

Note that for any $u_i\in A$, we have $d_{H}(u_i)>d$. Since $d$ is large enough, there exists a Berge-$S_{2\ell+1}$ with center $u_1$ in $H$ that is hyperedge-disjoint from the Berge-$T_\ell$ and whose skeleton is disjoint from $A\backslash\{u_1\}$. 
Now suppose that we have identified a Berge-$(k-2)S_{2\ell+2}$ in $H$ that is hyperedge-disjoint from the Berge-$T_\ell$ and whose skeleton intersects $A$ at the set of vertices $\{u_1,\ldots,u_{k-2}\}$, where $u_1,\ldots,u_{k-2}$ are the centers of $k-2$ stars in the skeleton, respectively.
Let us remove the $\ell+(k-2)(2\ell+2)$ hyperedges of the Berge-$T_\ell$ and Berge-$(k-2)S_{2\ell+2}$ from $H$, and denote by $H'''$ this resulting hypergraph. Since $d$ is large enough, by Lemma \ref{lem2.3}, there is a Berge-$S_{(k-2)(2\ell+3)+3\ell+3}$ with center $u_{k-1}$ in $H'''$. 
Note that the skeleton of the Berge-$(k-2)S_{2\ell+2}$ contains $(k-2)(2\ell+3)$ vertices
and does not contain the vertex $u_{k-1}$. Therefore, there is a Berge-$S_{2\ell+2}$ in $H'''$ whose skeleton is disjoint from the skeletons of the Berge-$T_\ell$ and Berge-$(k-2)S_{2\ell+2}$. By the definition of $H'''$, there is a Berge-$(k-1)S_{2\ell+2}$ in $H$ that is hyperedge-disjoint from the Berge-$T_\ell$ and whose skeleton is disjoint from the skeleton of the Berge-$T_\ell$. Since at most $\ell+1$ vertices of each star in the skeleton of this Berge-$(k-1)S_{2\ell+2}$ are shared with the skeletons of the Berge-$T_\ell$, there is a Berge-$T_\ell\cup (k-1)S_{\ell+1}$ in $H$.

\smallskip
{\bf{Subcase 2.2.}} $e(H[B])>e(H^*[B])$.
\smallskip
\smallskip

By the definition of $H^*$, we have $e(H^*[B])=\frac{n-k+1}{\ell}\binom{\ell}{r}$. Then
\begin{eqnarray*}
e(H[B])&>& e(H^*[B]) \\
&=& \frac{n-k+1}{\ell}\binom{\ell}{r} \\
&\geq& {\rm{ex}}_{r}(n-k+1,{\rm Berge}{\text -}T_\ell)
\end{eqnarray*}
by assumption. This implies that there is a Berge-$T_\ell$ in $H[B]$. As in Subcase 2.1, the degree condition on the vertices in $A$ guarantees the existence of a Berge-$(k-1)S_{\ell+1}$ that together with this Berge-$T_\ell$ forms a Berge-$T_\ell\cup (k-1)S_{\ell+1}$ in $H$.
This completes the proof.
\end{proof}


Now let us continue with the proofs of Theorem \ref{thm2} and Theorem \ref{thm21}. We will prove the following more general theorem. 

\begin{thm}
    Let $k\geq2$, $\ell_1,\ell_2\geq2$, $r\geq\ell_1+\ell_2+k-1$ and $T$ be a tree with $\ell_1$ edges. Let $\ell_{\rm max}:=\max\{\ell_1,\ell_2\}$ and $\ell_{\rm min}:=\min\{\ell_1,\ell_2\}$. Assume that ${\rm{ex}}_r(n,{\rm Berge}{\text -}T)\le \frac{n(\ell_1-1)}{r+1}$. Then $(\ell_{\rm min}-1)\big\lfloor\frac{n-k+1}{r-k+2}\big\rfloor\leq {\rm{ex}}_r(n,{\rm Berge}{\text -}T\cup (k-1)S_{\ell_2})\leq \frac{(\ell_{\rm max}-1)n}{r-k+2}+O(1)$.
\end{thm}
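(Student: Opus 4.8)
The plan is to handle the lower and upper bounds separately, with the upper bound being the real content. For the lower bound, I would imitate the construction $H^*$ from Theorem~\ref{thm1}: take a set $A^*$ of $k-1$ vertices, partition the remaining $n-k+1$ vertices into classes each of size $r-k+2$ (discarding a bounded remainder), and on each class $X$ together with $A^*$ place a \emph{hypergraph} consisting of $\ell_{\rm min}-1$ hyperedges, each being $A^*\cup X$ (or more precisely, one should take a Berge copy of a sub-forest avoiding a long path and avoiding $k-1$ disjoint stars $S_{\ell_2}$; since every hyperedge has size $r$ and $|A^*\cup X|=r+1$, each hyperedge omits exactly one vertex, so one can realize $\ell_{\rm min}-1$ hyperedges whose skeletons live in a single block). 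Because the skeleton of any Berge-$P_{\ell_1}$ or Berge-$S_{\ell_2}$ spans at least $\ell_2+1\le \ell_{\rm min}+1\le r-k+2$ vertices outside $A^*$ — wait, more carefully: any Berge-$T$ or Berge-$S_{\ell_2}$ in $H^*$ must use a vertex of $A^*$ in its skeleton (since a single block has too few vertices), so $H^*$ cannot contain $k$ vertex-disjoint such pieces; hence $H^*$ is Berge-$T\cup(k-1)S_{\ell_2}$-free, giving $(\ell_{\rm min}-1)\lfloor\frac{n-k+1}{r-k+2}\rfloor$ hyperedges.

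For the upper bound I would again split on the size of the high-degree set $V_0=\{v\in V(H): d_H(v)>d\}$ for a large constant $d=d(k,\ell_1,\ell_2,r)$, exactly as in the proof of Theorem~\ref{thm1}, and argue by a counting/averaging argument rather than induction on $r$ (since here $r$ is large, there is no room to recurse). Suppose $e(H)>\frac{(\ell_{\rm max}-1)n}{r-k+2}+Cn^{0}$ for a suitable additive constant. If $|V_0|\ge k-1$, pick $A\subseteq V_0$ of size $k-1$; deleting the $\le (k-1)n$ hyperedges meeting $A$ leaves $H[B]$ with more than $\frac{(\ell_{\rm max}-1)n}{r-k+2}-(k-1)n$ — this is \emph{not} obviously enough, so instead I would only delete hyperedges entirely inside $A$ (at most $\binom{k-1}{r}=0$ since $r>k-1$) plus a careful accounting: the point is that $H[B]$ should still contain a Berge-$T$ by the hypothesis ${\rm ex}_r(m,{\rm Berge}\text-T)\le \frac{m(\ell_1-1)}{r+1}$, because $\frac{(\ell_{\rm max}-1)n}{r-k+2} > \frac{n(\ell_1-1)}{r+1}$ once $k\ge 2$ (here one uses $\ell_{\rm max}\ge\ell_1$ and $r-k+2<r+1$). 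Then, as in Theorem~\ref{thm1}, the degree condition $d_H(u_i)>d$ with $d$ huge lets one greedily extract $k-1$ hyperedge-disjoint Berge stars $S_{\ell_2}$ centered at $u_1,\dots,u_{k-1}$, disjoint in skeleton from the Berge-$T$ and from each other (using Lemma~\ref{lem2.3} after peeling off the already-used hyperedges), assembling a Berge-$T\cup(k-1)S_{\ell_2}$.

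If $|V_0|<k-1$, every vertex has degree $\le d$, so $H$ is ``almost regular-bounded''; here I would again find many vertices of degree above the threshold $\frac{\ell_{\rm max}-1}{r-k+2}\cdot(\text{appropriate factor})$ by an averaging argument (the excess $\Omega(n)$ in $e(H)$ forces $\Omega(n)$ vertices of degree $>$ threshold), each yielding a Berge-$S_{\ell_2}$ locally; since $V_0$ is tiny and each such star has bounded ``footprint'' (at most $d(\ell_2+2)$ incident hyperedges), one greedily picks $k-1$ of them pairwise disjoint, deletes their $O(1)$ hyperedges, and the remaining hypergraph still has $>\frac{n(\ell_1-1)}{r+1}\ge {\rm ex}_r(n,{\rm Berge}\text-T)$ hyperedges, producing a Berge-$T$ disjoint from the chosen stars. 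The main obstacle, and where I would spend the most care, is pinning down the exact additive $O(1)$ constant and verifying the arithmetic inequality $\frac{(\ell_{\rm max}-1)n}{r-k+2}\ge \frac{n(\ell_1-1)}{r+1}$ survives after the various deletions (of hyperedges meeting $A$, or of the star footprints) — i.e.\ that enough edges remain in $H[B]$ or in $H$ minus the stars to still force the Berge-$T$; this hinges on $k\ge2$ making the gap between $r-k+2$ and $r+1$ strict, and on choosing $d$ after all the $\epsilon$'s are fixed.
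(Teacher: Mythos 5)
Your lower-bound construction is essentially the paper's, but your justification for why it is Berge-$T\cup(k-1)S_{\ell_2}$-free is off at the one point that matters: a block $A^*\cup X$ has $r+1\ge \ell_1+\ell_2+k$ vertices, so it is \emph{not} ``too few vertices'' that prevents a Berge-$T$ inside one block. The correct reasons are that each block carries only $\ell_{\rm min}-1<\ell_1$ hyperedges (so a Berge-$T$ must use hyperedges from two blocks, which intersect only in $A^*$, forcing the skeleton to meet $A^*$), and that every vertex of $B$ has degree at most $\ell_{\rm min}-1\le \ell_2-1$ (so every Berge-$S_{\ell_2}$ is centered in $A^*$). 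Also note that the $\ell_{\rm min}-1$ hyperedges in a block must be $A^*$ together with distinct $(r-k+1)$-subsets of $X$; ``each hyperedge omits exactly one vertex'' of $A^*\cup X$ is not enough, since omitting a vertex of $A^*$ would wreck the freeness argument.

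The real gap is in your upper bound in the case $|V_0|\ge k-1$. You need a Berge-$T$ whose skeleton avoids the $k-1$ chosen high-degree vertices $A$, and you propose to find it in $H[B]$ with $B=V(H)\setminus A$. But $H[B]$ consists only of hyperedges disjoint from $A$, and in the extremal configuration (and more generally) \emph{every} hyperedge may contain all of $A$, so $H[B]$ can be empty while $e(H)=\Theta(n)$; the arithmetic comparison $\frac{(\ell_{\rm max}-1)n}{r-k+2}>\frac{n(\ell_1-1)}{r+1}$ says nothing about how many hyperedges survive inside $B$. You flag this yourself (``careful accounting'') but supply no mechanism, and this is precisely where the paper's proof differs structurally: it deletes the $k-1$ highest-degree vertices \emph{from each hyperedge}, truncates to an $(r-k+1)$-uniform trace hypergraph $H^{(r-k+1)}$ without repeated hyperedges, and either applies the hypothesis at uniformity $r-k+1$ (this is where the denominator $r-k+2=(r-k+1)+1$ comes from, so the hypothesis must be available for the smaller uniformity, not just for $r$) or lifts a Berge-$T$ of the trace back to $H$ with skeleton avoiding those $k-1$ vertices. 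Without this projection idea your Case $|V_0|\ge k-1$ does not go through; your Case $|V_0|<k-1$ sketch, and the greedy extraction of disjoint Berge stars via Lemma~\ref{lem2.3}, are in the right spirit and close to what the paper does after the Berge-$T$ has been secured.
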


Theorem \ref{thm2} follows by applying \textbf{(ii)} of Theorem \ref{lem2.2}, while Theorem \ref{thm21} follows by applying a theorem of Gy\H{o}ri, Salia, Tompkins and Zamora \cite{10-10A2}, who showed that for any  $\ell$-edge tree $T_{\ell}\neq S_{\ell}$, we have ${\rm{ex}}_r(n,{\rm Berge}{\text -}T_{\ell})\leq \frac{n(\ell-1)}{r+1}$ provided
$r\geq \ell(\ell-2)$. Let us remark that they conjecture that the same holds for $r\ge \ell$.

\begin{proof}[{\bf Proof}]
For the lower bound, we consider the following $r$-graph $\hat{H}$. Let $A$ be a set of $k-1$ vertices and $B$ be a set of $n-k+1$ vertices. Partition the vertices of $B$ into $\lfloor\frac{n-k+1}{r-k+2}\rfloor$ classes of size $r-k+2$ and a class of size $n-k+1-(r-k+2)\lfloor\frac{n-k+1}{r-k+2}\rfloor$. For each partition class $X$ of size $r-k+2$ of $B$, we take $\ell_{\rm min}-1$ $(r-k+1)$-uniform hyperedges (this is possible as $\binom{r-k+2}{r-k+1}=r-k+2>\ell_{\rm min}$). Add the set $A$ to each hyperedge to form an $r$-graph on $n$ vertices. 
Clearly, $|\hat{H}|=(\ell_{\rm min}-1)\lfloor\frac{n-k+1}{r-k+2}\rfloor$.

Now let us show that $\hat{H}$ is Berge-$T\cup (k-1)S_{\ell_2}$-free. Since the degree of each vertex in $B$ is at most $\ell_{\rm min}-1$, the skeleton of any Berge-$S_{\ell_2}$ in $\hat{H}$ uses at least one vertex from $A$. 
Note that for any partition class $X$ of size $r-k+2$ of $B$, there are only $\ell_{\rm min}-1$ hyperedges in $X\cup A$. Therefore, any Berge-$T$ in $\hat{H}$ intersects with at least two classes from $B$. Then there exist two adjacent hyperedges $e_1$ and $e_2$ in Berge-$T$ such that $e_1\cap e_2=A$. 
This implies that the skeleton of any Berge-$T$ in $\hat{H}$ use at least one vertex from $A$. Thus, $\hat{H}$ is Berge-$T\cup (k-1)S_{\ell_2}$-free as $|A|=k-1$.

We now continue with the upper bound. Suppose 
that $H$ is a Berge-$T\cup (k-1)S_{\ell_2}$-free $r$-graph on $n$ vertices. Let $v_1,\dots,v_n$ be the vertices of $H$ listed in decreasing order of their degrees. By deleting $v_1,\dots,v_{k-1}$ from each hyperedge of $H$, we obtain a hypergraph on $n-k+1$ vertices in which every hyperedge has at least $r-k+1$ vertices. 

We now remove some vertices from the hyperedges of the hypergraph such that each hyperedge contains exactly $r-k+1$ vertices. First, we order the hyperedges of $H$ with less than $k-1$ vertices from the set $\{v_1,\dots,v_{k-1}\}$ arbitrarily. Then we go through them in this order, and each time we take an unused $(r-k+1)$-subset avoiding $\{v_1,\dots,v_{k-1}\}$. If we are unable to do so for a hyperedge, it means that every $(r-k+1)$-subset of the hyperedge has already been taken. The hyperedge has at least $r-k+2$ vertices outside $\{v_1,\dots,v_{k-1}\}$, thus there are at least $\binom{r-k+2}{r-k+1}> \ell_1$ such $(r-k+1)$-sets, and they form an $(r-k+1)$-uniform Berge-$T$, which will later be shown to yield a contradiction. Therefore, we obtain an $(r-k+1)$-graph $H^{(r-k+1)}$ without repeated hyperedges.
If $H^{(r-k+1)}$ is Berge-$T$-free, then by our assumption on $T$, we have 
\begin{eqnarray*}
e(H)=e(H^{(r-k+1)})\leq \frac{(\ell_1-1)(n-k+1)}{r-k+2}\leq \frac{(\ell_{\rm max}-1)n}{r-k+2}+O(1).
\end{eqnarray*}

Now suppose that $H^{(r-k+1)}$ is the resulting $(r-k+1)$-graph that may contain repeated hyperedges, and that $H^{(r-k+1)}$ contains a Berge-$T$. 
By the definition of $H^{(r-k+1)}$, we may find a Berge-$T$ in $H$ whose skeleton is disjoint from $\{v_1,\dots,v_{k-1}\}$, denoted by $P$. Let $V_0$ be the set of vertices of its skeleton. Clearly, $|V_0|=\ell_1+1$. Let $H'=\big(V(H), E(H)\backslash E(P)\big)$. Then $e(H')=e(H)-\ell_1$. 

Let $D:=D(\ell_1,\ell_2,k,r)$ be a large enough fixed constant, and set
\begin{eqnarray*}
V_1&=& \big\{v\ \big|\ d_{H'}(v)>D,\ v\in V(H)\big\}; \\
V_2&=& V(H)\backslash V_1.
\end{eqnarray*}

\begin{cla}\label{clam1}
$|V_1|\leq k-2$.
\end{cla}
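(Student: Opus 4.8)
The plan is to argue by contradiction: assuming $|V_1|\ge k-1$, I will build a Berge-$T\cup(k-1)S_{\ell_2}$ inside $H$, contradicting the choice of $H$. The crucial observation is that the $k-1$ star centres can be taken to be the highest-degree vertices $v_1,\dots,v_{k-1}$ themselves. This is legitimate for two reasons. Since $d_{H'}(v)\le d_H(v)$ for every $v$, we have $V_1\subseteq\{v:d_H(v)>D\}$, so $|V_1|\ge k-1$ forces at least $k-1$ vertices of $H$ to have degree exceeding $D$, whence $d_H(v_i)>D$ for all $i\le k-1$; as $E(P)$ consists of only $\ell_1$ hyperedges, $d_{H'}(v_i)\ge d_H(v_i)-\ell_1>D-\ell_1$, still an arbitrarily large constant. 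Moreover, by construction the skeleton $V_0$ of $P$ is disjoint from $\{v_1,\dots,v_{k-1}\}$, so these centres automatically avoid $V_0$.

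Next I would build the stars greedily in $H'$. Suppose hyperedge-disjoint Berge-$S_{\ell_2}$'s $S^{(1)},\dots,S^{(i-1)}$ with centres $v_1,\dots,v_{i-1}$ have been produced, with pairwise disjoint skeletons that are also disjoint from $V_0$; delete their hyperedges from $H'$ to obtain $H_{i-1}$, so that $d_{H_{i-1}}(v_i)\ge d_{H'}(v_i)-(i-1)\ell_2$ still exceeds any fixed constant once $D$ is large. Put $c:=(\ell_1+1)+(k-2)+(k-2)(\ell_2+1)$, an upper bound on the number of vertices to be avoided: those of $V_0$, those in $\{v_1,\dots,v_{k-1}\}\setminus\{v_i\}$, and those in the skeletons of $S^{(1)},\dots,S^{(i-1)}$. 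By Lemma \ref{lem2.3}, $H_{i-1}$ contains a Berge-$S_{\ell_2+c}$ with centre $v_i$; since each forbidden vertex can be the assigned leaf of at most one of its hyperedges, discarding at most $c$ hyperedge/leaf pairs leaves a Berge-$S_{\ell_2}$, call it $S^{(i)}$, centred at $v_i$, whose skeleton avoids all forbidden vertices and which, being a subhypergraph of $H_{i-1}$, is hyperedge-disjoint from $P,S^{(1)},\dots,S^{(i-1)}$. This preserves all invariants, and after $k-1$ steps the construction is complete.

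Finally, $P$ together with $S^{(1)},\dots,S^{(k-1)}$ realizes the forbidden configuration: the hyperedges of $P$ lie in $E(H)\setminus E(H')$ while those of the $S^{(i)}$ lie in $E(H')$ and are mutually disjoint, so all hyperedges are distinct; and the skeletons are pairwise disjoint because each $S^{(i)}$ was built to avoid $V_0$ and the skeletons of the earlier stars, and each $v_i$ lies outside $V_0$ and outside all earlier skeletons. Hence $H$ contains a Berge-$T\cup(k-1)S_{\ell_2}$, a contradiction, so $|V_1|\le k-2$.

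I expect the only genuinely delicate point to be the routing step: guaranteeing that each star misses the bounded set of already-used vertices and the set $V_0$. This is exactly why one first extracts a much larger Berge-star via Lemma \ref{lem2.3} (valid because its size $\ell_2+c$ is a fixed constant while $d_{H_{i-1}}(v_i)$ is unbounded in $D$) and then prunes leaves. Everything else is bookkeeping; in particular this claim uses only that $D$ may be chosen large relative to $\ell_1,\ell_2,k,r$, not the full strength of $r\ge\ell_1+\ell_2+k-1$.
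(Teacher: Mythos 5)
Your proposal is correct and follows essentially the same route as the paper: assume $|V_1|\ge k-1$, greedily build $k-1$ hyperedge-disjoint Berge stars centred at the high-degree vertices by first extracting an oversized Berge star via Lemma \ref{lem2.3} and then pruning leaves to avoid the bounded set of forbidden vertices, and combine with $P$ to reach a contradiction. The only differences are bookkeeping (you avoid $V_0$ during the construction rather than at the end, and you explicitly address why $v_1,\dots,v_{k-1}$ retain large degree in $H'$, a point the paper glosses over).
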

\begin{proof}[{\bf{Proof of Claim.}}]
Suppose to the contrary that $|V_1|\geq k-1$. Then $v_1,\dots,v_{k-1}\in V_1$. Let $V'_1:=\{v_1,\ldots,v_{k-1}\}$. Note that for any $v_i\in V'_1$, we have $d_{H'}(v_i)>D$. Since $D$ is large enough, there exists a Berge-$S_{\ell_1+\ell_2+1}$ with center $v_1\in V'_1$ in $H'$ and its skeleton is disjoint from $V'_1\backslash\{v_1\}$. Now suppose that we have identified a Berge-$(k-2)S_{\ell_1+\ell_2+1}$ in $H'$ and its skeleton intersects $V'_1$ at the set of vertices $\{v_1,\ldots,v_{k-2}\}$, where $v_1,\ldots,v_{k-2}$ are the centers of $k-2$ stars in the skeleton, respectively. 
Let us remove the $(k-2)(\ell_1+\ell_2+1)$ hyperedges of the Berge-$(k-2)S_{\ell_1+\ell_2+1}$ from $H'$ and let this resulting hypergraph be $H''$.
Since $D$ is large enough, we have $$d_{H'}(v_{k-1})>D> (k-1)(\ell_1+\ell_2+2)+(k-2)({\ell_1+\ell_2+1}).$$
By Lemma \ref{lem2.3}\,\textbf{(ii)}, there is a Berge-$S_{(k-1)(\ell_1+\ell_2+2)-1}$ with center $v_{k-1}$ in $H''$. This implies that this Berge-$S_{(k-1)(\ell_1+\ell_2+2)-1}$ is hyperedge-disjoint from the Berge-$(k-2)S_{\ell_1+\ell_2+1}$ in $H'$.
Note that the skeleton of the Berge-$(k-2)S_{\ell_1+\ell_2+1}$ contains $(k-2)(\ell_1+\ell_2+2)$ vertices and does not
contain the vertex $v_{k-1}$. Therefore, there is a Berge-$S_{\ell_1+\ell_2+1}$ in $H''$ whose skeleton is disjoint from the skeleton of the Berge-$(k-2)S_{\ell_1+\ell_2+1}$, as at most $(k-2)(\ell_1+\ell_2+2)$ vertices of the skeleton of this Berge-$S_{(k-1)(\ell_1+\ell_2+2)-1}$ are shared with the skeleton of the Berge-$(k-2)S_{\ell_1+\ell_2+1}$. Thus, by the definition of $H''$, there is a Berge-$(k-1)S_{\ell_1+\ell_2+1}$ in $H'$ and its skeleton contains $V'_1$, where $v_1,\ldots,v_{k-1}$ are the centers of $k-1$ stars in the skeleton. Recall that $V_0$ is the set of vertices of skeleton of $P$. 
Since at most $\ell_1+1$ vertices of each star in the skeleton of this Berge-$(k-1)S_{\ell_1+\ell_2+1}$ are shared with $V_0$, there is a Berge-$(k-1)S_{\ell_2}$ in $H'$ whose skeleton is disjoint from $V_0$. So by the definition of $H'$, there is a Berge-$T\cup (k-1)S_{\ell_2}$ in $H$, which is a contradiction. 
\end{proof}

By Claim \ref{clam1}, we obtain $V_0\subseteq V_2$. Let $V'_2=V_2\backslash V_0$ and $d_{H'}(V'_2)$ denote the average degree of the vertices in $V'_2$ within $H'$. Now let us estimate $d_{H'}(V'_2)$. 

\begin{cla}\label{clam2}
If $|V_1|=k-2$, then $d_{H'}(V'_2)\leq \ell_2-1$. If $|V_1|<k-2$, then $d_{H'}(V'_2)\leq \ell_2-1+\epsilon$ for any $\epsilon>0$.
\end{cla}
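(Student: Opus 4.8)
The plan is to argue by contradiction. Since the Berge-$T$ $P$ (with skeleton $V_0$) uses only edges of $E(H)\setminus E(H')$, it suffices to show that if $d_{H'}(V'_2)$ exceeds the stated threshold then $H'$ contains $k-1$ Berge stars $S_{\ell_2}$ with pairwise disjoint skeletons, all disjoint from $V_0$, and with pairwise disjoint edge sets; together with $P$ this yields a Berge-$T\cup(k-1)S_{\ell_2}$ in $H$, a contradiction. First I would record the easy structural facts: by Claim~\ref{clam1} we have $V_1\subseteq\{v_1,\dots,v_{k-1}\}$, and $P$ was chosen to avoid $\{v_1,\dots,v_{k-1}\}$, so $V_0\subseteq V_2$; hence $|V'_2|=n-O(1)$ and every $v\in V'_2$ has $d_{H'}(v)\le D$. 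The $|V_1|\le k-2$ vertices of $V_1$ have degree $>D$ in $H'$, so exactly as in the proof of Claim~\ref{clam1} each of them can serve as the center of a Berge-$S_{\ell_2}$ avoiding any prescribed bounded set of vertices and bounded set of edges; thus it remains only to produce $k-1-|V_1|$ Berge-$S_{\ell_2}$'s centered at distinct vertices of $V'_2$ that are pairwise disjoint in vertices and edges and avoid $V_0\cup V_1$, after which the $V_1$-stars are added greedily.

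Now suppose $d_{H'}(V'_2)$ is large. By averaging over $V'_2$, where degrees are integers bounded by $D$: if $d_{H'}(V'_2)>\ell_2-1$ then some $v\in V'_2$ has $d_{H'}(v)\ge\ell_2$, and if moreover $d_{H'}(V'_2)>\ell_2-1+\epsilon$ then $\Omega(n)$ vertices of $V'_2$ have degree $\ge\ell_2$. Build the $V'_2$-stars one at a time; when building the next one, the set $X$ of forbidden vertices is $V_0\cup V_1$ together with the $\le(k-2)(\ell_2+1)$ vertices of the $V'_2$-skeletons already used, so $|X|=O(1)$, and only $O(1)$ edges of $H'$ have been used. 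Call $w\in V'_2$ with $d_{H'}(w)\ge\ell_2$ \emph{bad} (for $X$) if it admits no Berge-$S_{\ell_2}$ in $H'$ with skeleton disjoint from $X$. A defect version of Hall's theorem, applied to the family $\{e\setminus(\{w\}\cup X):e\in E(H'),\,w\in e\}$, shows that a bad $w$ lies in some edge of $H'$ containing at least $r-\ell_2$ vertices of $X$; since $r\ge\ell_1+\ell_2+k-1$, every $(r-\ell_2)$-subset of $X$ contains at least $\ell_1+1\ge2$ vertices outside $V_1$, hence of degree $\le D$, so there are at most $O(1)$ such edges and hence at most $O(1)$ bad vertices. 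Consequently we may always pick the next center among the (still $\Omega(n)$ many) good vertices that in addition lie in none of the $O(1)$ already-used edges, and the choice of a system of distinct representatives for its $\ell_2$ edges produces a Berge-$S_{\ell_2}$ with skeleton disjoint from $X$. This completes the construction and the contradiction.

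It remains to explain the dichotomy. When $|V_1|<k-2$ at least two $V'_2$-stars are needed, so we genuinely need $\Omega(n)$ candidates, which is why the extra $\epsilon$ appears in the hypothesis $d_{H'}(V'_2)>\ell_2-1+\epsilon$. When $|V_1|=k-2$ only one $V'_2$-star is needed, so $X=V_0\cup V_1$ has size $\ell_1+k-1\le r-\ell_2$; in the Hall analysis a bad $w$ would then be forced to have $\ell_2\ge2$ equal edges, impossible in a simple hypergraph, so there are no bad vertices and a single vertex of degree $\ge\ell_2$ suffices — this is why that case requires no $\epsilon$.

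The step I expect to be the main obstacle is precisely the claim that only $O(1)$ vertices of $V'_2$ are bad for $X$: a vertex of $V'_2$ may have degree exactly $\ell_2$, leaving no slack to reroute its leaves away from $X$, so one must exploit both the abundance of $\Omega(n)$ candidates and the bounded co-degrees forced by $r\ge\ell_1+\ell_2+k-1$ (ensuring every $(r-\ell_2)$-subset of $X$ meets the low-degree part of $H'$) to rule out all but boundedly many obstructions. Everything else — the averaging, the treatment of $V_1$, and reassembling $P$ with the $k-1$ stars — is routine and parallels arguments already used for Claim~\ref{clam1}.
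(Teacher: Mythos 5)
Your proposal is correct and, at the structural level, it is the same proof as the paper's: assume the average-degree bound fails, use integrality (for $|V_1|=k-2$) or the counting argument (for $|V_1|<k-2$) to find one, respectively $\Omega(n)$, vertices of $V'_2$ with $d_{H'}\ge \ell_2$, extract pairwise disjoint Berge-$S_{\ell_2}$'s whose skeletons avoid $V_0$, and recombine with $P$ to contradict Berge-$T\cup(k-1)S_{\ell_2}$-freeness. Where you differ is in the one technical sub-step of producing a Berge-$S_{\ell_2}$ centered at a degree-$\ge\ell_2$ vertex of $V'_2$ with skeleton avoiding a bounded forbidden set $X$: the paper truncates each incident hyperedge to an $\ell_2$-element subset disjoint from $V_0\cup V_1$ (using $r\ge\ell_1+\ell_2+k-1$) and invokes Lemma~\ref{lem2.3}\,\textbf{(ii)} on the resulting $\ell_2$-uniform system, whereas you run a defect-Hall/SDR analysis on the link of the prospective center and bound the number of ``bad'' centers by $O(1)$ via a co-degree count against the bounded-degree vertices of $X\setminus V_1$. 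Your route is self-contained and slightly more careful --- your observation that, for $X=V_0\cup V_1$, a bad center would force $\ell_2\ge 2$ identical hyperedges of $H'$ deals explicitly with the multiplicity issue that the paper's truncation step leaves implicit --- at the cost of length; the paper's route is shorter because it outsources the work to the Khormali--Palmer lemma. A purely cosmetic further difference: in the $|V_1|<k-2$ case the paper places all $k-1$ star centers in $V'_2$, while you use the $|V_1|$ vertices of $V_1$ as centers and take only $k-1-|V_1|$ centers from $V'_2$; both variants work.
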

\begin{proof}[{\bf{Proof of Claim.}}]
First let us consider the case when $|V_1|=k-2$ and suppose to the contrary that $d_{H'}(V'_2)>\ell_2-1$. Then there exists a vertex $u\in V'_2$ such that $d_{H'}(u)\geq \ell_2$. Let us remove vertices from the hyperedges incident to $u$ to ensure the resulting hyperedges are disjoint from $V_0\cup V_1$ and have size exactly $\ell_2$ (this is possible as $r\geq \ell_1+\ell_2+k-1$). By Lemma \ref{lem2.3}\,\textbf{(ii)}, there exists an $\ell_2$-uniform Berge-$S_{\ell_2}$ centered at $u$ within these $\ell_2$-uniform hyperedges, with its skeleton contained in $V'_2$. This implies that there exists an $r$-uniform Berge-$S_{\ell_2}$ centered at $u$ within $H'$, with its skeleton contained in $V'_2$.
As $D$ is large enough and $d_{H'}(v)>D$ for any $v\in V_1$, we may construct a Berge-$(k-2)S_{\ell_1+\ell_2+1}$ in $H'$ whose hyperedges and skeleton are disjoint from those of the $r$-uniform Berge-$S_{\ell_2}$. Since at most $\ell_1+1$ vertices of each star in the skeleton of this Berge-$(k-2)S_{\ell_1+\ell_2+1}$ are shared with $V_0$, there is a Berge-$(k-2)S_{\ell_2}$ in $H'$ whose skeleton is disjoint from $V_0$. Thus, there is a Berge-$(k-1)S_{\ell_2}$ in $H'$ whose skeleton is disjoint from $V_0$. By the definition of $H'$, there is a Berge-$T\cup (k-1)S_{\ell_2}$ in $H$, which is a contradiction.

Now consider the case when $|V_1|<k-2$ and suppose to the contrary that $d_{H'}(V'_2)\geq\ell_2-1+\epsilon$ for some fixed constant $0<\epsilon<1$. Then
\begin{eqnarray}
\sum_{v\in V'_2}d_{H'}(v)\geq (n-|V_1|-\ell_1-1)(\ell_2-1+\epsilon).
\end{eqnarray}

Let $b$ be the number of vertices of degree at most $\ell_2-1$ in $V'_2$ within $H'$. Then
\begin{eqnarray}
\sum_{v\in V_2}d_{H'}(v)\leq b(\ell_2-1)+(n-|V_1|-\ell_1-1-b)D.
\end{eqnarray}
Combining (3.4) and (3.5) and solving for $b$, we obtain
\begin{eqnarray*}
b\leq \frac{D-\ell_2+1-\epsilon}{D-\ell_2+1}(n-|V_1|-\ell_1-1)=(1-\epsilon')(n-|V_1|-\ell_1-1),
\end{eqnarray*}
where $\epsilon':=\frac{\epsilon}{D-\ell_2+1}$. Clearly, $0<\epsilon'<1$ as $D$ is large enough. So the number of vertices of degree greater than $\ell_2-1$ in $V'_2$ within $H'$ is at least $\epsilon'(n-|V_1|-\ell_1-1)=\Omega(n)$.

For each vertex $v$ of degree greater than $\ell_2-1$ in $V'_2$ within $H'$, let us remove vertices from the hyperedges incident to $v$ to ensure the resulting hyperedges are disjoint from $V_0\cup V_1$ and have size exactly $\ell_2$ (this is possible as $r\geq \ell_1+\ell_2+k-1>\ell_1+\ell_2+1+|V_1|$). By Lemma \ref{lem2.3}\,\textbf{(ii)}, there exists an $\ell_2$-uniform Berge-$S_{\ell_2}$ centered at $v$ within these $\ell_2$-uniform hyperedges, with its skeleton contained in $V'_2$. This implies that there exists an $r$-uniform Berge-$S_{\ell_2}$ centered at $v$ within $H'$, with its skeleton contained in $V'_2$. Thus, there are $\Omega(n)$ Berge-$S_{\ell_2}$ with centers in $V'_2$ within $H'$, each of whose skeletons contained in $V'_2$.
Since $n$ is sufficiently large and $d_{H'}(v)<D$ for any $v\in V'_2$, we may find $k-1$ hyperedge-disjoint copies of Berge-$S_{\ell_2}$ in $H'$ whose skeletons are all in $V'_2$ and are pairwise disjoint. This implies that there is a Berge-$(k-1)S_{\ell_2}$ in $H'$ whose skeleton is in $V'_2$. So by the definition of $H'$, there is a Berge-$T\cup (k-1)S_{\ell_2}$ in $H$, which is a contradiction.
\end{proof}

Note that $\sum_{v\in V(G)}d_{G}(v)=r\cdot e(G)$ for any $r$-graph $G$. Since $d_{H'}(v)\leq e(H')$ for any $v\in V_1$, we have
\begin{eqnarray*}
r\cdot e(H')&=&\sum_{v\in V_1}d_{H'}(v)+\sum_{v\in V_0}d_{H'}(v)+\sum_{v\in V'_2}d_{H'}(v) \\
&\leq& |V_1|\cdot e(H')+(\ell_1+1)D+(n-|V_1|-\ell_1-1)d_{H'}(V'_2).
\end{eqnarray*}
Therefore,
\begin{eqnarray*}
e(H')\leq \frac{d_{H'}(V_2)(n-|V_1|-\ell_1-1)+(\ell_1+1)D}{r-|V_1|}.
\end{eqnarray*}

Since $n$ is sufficiently large, by Claim \ref{clam2}, we may choose $\epsilon$ small enough such that
\begin{eqnarray*}
\frac{d_{H'}(V_2)(n-|V_1|-\ell_1-1)+(\ell_1+1)D}{r-|V_1|}\leq \frac{(\ell_2-1)(n-k-\ell_1+1)+(\ell_1+1)D}{r-k+2}.
\end{eqnarray*}
Then
\begin{eqnarray*}
e(H)=e(H')+\ell_1\leq \frac{(\ell_2-1)n}{r-k+2}+O(1)\leq \frac{(\ell_{\rm max}-1)n}{r-k+2}+O(1).
\end{eqnarray*}
This completes the proof.
\end{proof}

In \cite{D4}, Khormali and Palmer completely determined the Tur\'{a}n number of Berge matchings for sufficiently large $n$.

\begin{thm}[Khormali and Palmer \cite{D4}]\label{lem4.1}
Fix integers $k \geq 1$ and $r \geq 2$. Then for $n$ large enough,
\begin{eqnarray*}
{\rm{ex}}_r(n,{\rm Berge}{\text -}M_k)=
\begin{cases}
k - 1, & {\rm{if}}\ r \geq 2k - 1; \\
\binom{2k-1}{r}, & {\rm{if}}\ k < r < 2k - 1; \\
n - k + 1, & {\rm{if}}\ r = k; \\
\binom{k-1}{r-1} (n - k + 1) + \binom{k-1}{r}, & {\rm{if}}\ r \leq k - 1.
\end{cases}
\end{eqnarray*}
\end{thm}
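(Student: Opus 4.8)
The plan is to prove matching lower and upper bounds, splitting along the four regimes of $r$ relative to $k$. For the lower bound one exhibits four constructions. If $r\ge 2k-1$, any $r$-graph with exactly $k-1$ hyperedges is Berge-$M_k$-free, since a Berge-$M_k$ uses $k$ distinct hyperedges. If $k<r<2k-1$, the complete $r$-graph on a fixed set of $2k-1$ vertices (plus $n-2k+1$ isolated vertices) is Berge-$M_k$-free because the skeleton of a Berge-$M_k$ has $2k$ vertices while every subhypergraph here lives on at most $2k-1$ vertices; it has $\binom{2k-1}{r}$ hyperedges. If $r\le k$, fix a set $A$ of $k-1$ vertices and take all $r$-sets $e$ with $|e\cap A|\ge r-1$; there are $\binom{k-1}{r}$ with $e\subseteq A$ and $\binom{k-1}{r-1}(n-k+1)$ with $|e\cap A|=r-1$, which is the claimed value (and equals $n-k+1$ when $r=k$); it is Berge-$M_k$-free because every $2$-subset of such an $e$ meets $A$, so $k$ pairwise disjoint such pairs would need $k$ distinct vertices of $A$.

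For the upper bound, let $H$ be Berge-$M_k$-free on $n$ (large) vertices, let $\nu\le k-1$ be the largest size of a Berge matching in $H$, and fix a maximum Berge matching consisting of pairwise disjoint pairs $f_1,\dots,f_\nu$ hosted by distinct hyperedges $g_1,\dots,g_\nu$; put $T=f_1\cup\cdots\cup f_\nu$, so $|T|=2\nu\le 2k-2$. Maximality forces $|e\setminus T|\le 1$ for every hyperedge $e\notin\{g_1,\dots,g_\nu\}$, since two vertices of $e$ outside $T$ would form a pair extending the matching; in particular, if $r-1>|T|$ then $E(H)=\{g_1,\dots,g_\nu\}$ and $e(H)\le k-1$. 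Otherwise I would classify the hyperedges $e\notin\{g_i\}$ by their trace $e\cap T$: at most $\binom{2k-2}{r}$ have $|e\cap T|=r$, and the rest have $|e\cap T|=r-1$; grouping these by the $(r-1)$-set $S:=e\cap T$, call $S$ \emph{popular} if at least $C=C(k,r)$ hyperedges share this trace. As there are at most $\binom{2k-2}{r-1}$ traces, the unpopular ones together account for $O(1)$ hyperedges.

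The heart of the proof is the claim that the union $A$ of all popular $(r-1)$-sets satisfies $|A|\le k-1$: if $|A|\ge k$, pick distinct $a_1,\dots,a_k\in A$, for each $a_j$ a popular set $S_{i_j}\subseteq A$ containing $a_j$, and pairwise distinct vertices $x_1,\dots,x_k\notin T$ with $S_{i_j}\cup\{x_j\}\in E(H)$ (possible for $C$ and $n$ large), so that the pairs $\{a_j,x_j\}\subseteq S_{i_j}\cup\{x_j\}$ form a Berge-$M_k$. The same device in ``defect'' form shows that whenever $|A|=k-1$ there is \emph{no} hyperedge $e$ with $|e\cap A|\le r-2$: such an $e$ contains a pair disjoint from $A$, which together with $k-1$ pairs $\{a,x\}$ running through the $k-1$ vertices $a\in A$ and hosted by popular edges produces a Berge-$M_k$. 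Hence when $|A|=k-1$ every hyperedge meets $A$ in at least $r-1$ vertices, so $e(H)\le\binom{k-1}{r-1}(n-k+1)+\binom{k-1}{r}$; when $|A|\le k-2$ the analogous count gives at most $\binom{k-2}{r-1}n+O(1)$, strictly below the target for $n$ large; and once $r\ge k+1$ there are no popular sets at all (a popular set has $r-1\ge k$ vertices and cannot lie in $A$), whence $e(H)=O(1)$.

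The main obstacle is to replace these $O(1)$ bounds in the range $r>k$ by the exact values $\binom{2k-1}{r}$ (for $k<r<2k-1$) and $k-1$ (for $r\ge 2k-1$). This calls for a re-pairing argument: when the surviving hyperedges essentially all contain $T$, one re-routes the pairs inside these ``fat'' hyperedges — for instance replacing $f_i=\{a,b\}$ by the two disjoint pairs $\{a,x\}$ and $\{b,x'\}$ taken from hyperedges $T\cup\{x\}$ and $T\cup\{x'\}$ — to build a Berge matching of size $\nu+1$, contradicting maximality, once the number of such hyperedges exceeds a bounded threshold. Carrying this out uniformly over the boundary regimes (small versus large $\nu$, $|T|$ versus $r-1$, distinctness of the re-routed hyperedges from the $g_i$) and tracking the residual constants so that they collapse to exactly $\binom{2k-1}{r}$ and $k-1$ is the delicate part; the remainder is routine counting.
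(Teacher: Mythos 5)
A preliminary remark: the paper does not prove this statement. Theorem \ref{lem4.1} is imported verbatim from Khormali and Palmer \cite{D4} and used as a black box (in the proof of Theorem \ref{thmstarmat}), so there is no in-paper argument to compare yours against; I can only judge your proposal on its own merits.

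Your lower-bound constructions are all correct, and the upper-bound argument is essentially complete and correct in the regimes $r\le k$: the reduction to a maximum Berge matching on vertex set $T$, the observation that every non-host hyperedge has at most one vertex outside $T$, the popular-trace classification, and the two exchange arguments (first $|A|\le k-1$, then $|e\cap A|\ge r-1$ for every hyperedge when $|A|=k-1$) do yield the exact values $n-k+1$ and $\binom{k-1}{r-1}(n-k+1)+\binom{k-1}{r}$ for large $n$. The genuine gap is where you yourself flag it: for $r>k$ you only obtain $e(H)=O(1)$ with a constant of the order $\nu+\binom{2k-2}{r}+C\binom{2k-2}{r-1}$, which is far above the claimed exact values $\binom{2k-1}{r}$ and $k-1$, and the step down to those values is not routine bookkeeping. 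Concretely, for $r\ge 2k-1$ your own case analysis already gives $e(H)\le k-1$ except when $r=2k-1$ and $\nu=k-1$; there your re-pairing reduces $H$ to the $k-1$ hosts plus at most one hyperedge of the form $T\cup\{x\}$, i.e.\ to $e(H)\le k$, and eliminating that last hyperedge amounts to showing that $k$ distinct hyperedges of size $2k-1$ in this configuration always carry $k$ pairwise disjoint pairs with distinct hosts. A greedy selection fails precisely at the last hyperedge (after $k-1$ pairs only $2k-1-2(k-1)=1$ vertex remains available), so a genuine rainbow-matching-type argument is needed and is missing. For $k<r<2k-1$ the situation is worse: you would have to show that a Berge-$M_k$-free $H$ with more than $\binom{2k-1}{r}$ hyperedges cannot exist, which requires identifying that near-extremal examples are contained in a clique on $2k-1$ vertices and then extracting a Berge-$M_k$ as soon as a hyperedge leaves that clique; no part of your sketch produces this structure. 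So the proposal establishes the theorem for $r\le k$ but leaves both $r>k$ cases unproved.
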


We now begin with the proof of Theorem \ref{thmstarmat} that we restate here for convenience.

\begin{thm*}
    \textbf{(i)} If $F$ is not a matching and $r\ge k+|V(F)|$, then for sufficiently large $n$, ${\rm{ex}}_r(n,{\rm Berge}{\text -}F\cup M_{k-1})={\rm{ex}}_r(n,{\rm Berge}{\text -}F)$.

    \textbf{(ii)} If $F$ contains a cycle and $r>k$, then for sufficiently large $n$, ${\rm{ex}}_r(n,{\rm Berge}{\text -}F\cup M_{k-1})={\rm{ex}}_r(n,{\rm Berge}{\text -}F)$.

    \textbf{(iii)} Let $F$ be a graph with $1\leq w\le r-1$ vertices of degree greater than 1 and $r>k+w-1$. Then for sufficiently large $n$, ${\rm{ex}}_r(n,{\rm Berge}{\text -}F\cup M_{k-1})={\rm{ex}}_r(n,{\rm Berge}{\text -}F)$.
\end{thm*}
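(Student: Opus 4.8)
The plan is to prove all three parts by the same strategy: the lower bound $\mathrm{ex}_r(n,\mathrm{Berge}\text{-}F\cup M_{k-1})\geq \mathrm{ex}_r(n,\mathrm{Berge}\text{-}F)$ is immediate since any Berge-$F$-free hypergraph is Berge-$F\cup M_{k-1}$-free, so the content is the upper bound. For the upper bound, I would take a Berge-$F$-free extremal hypergraph on $n$ vertices is not quite what we want; instead, suppose $H$ is an $n$-vertex $r$-graph with $e(H)>\mathrm{ex}_r(n,\mathrm{Berge}\text{-}F)$, so $H$ contains a Berge-$F$, and I must produce a Berge-$F\cup M_{k-1}$. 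The idea is that a Berge-$F$ uses only $|V(F)|$ vertices in its skeleton and, since $n$ is large, deleting those vertices together with all hyperedges meeting them leaves a hypergraph $H'$ with $e(H')> \mathrm{ex}_r(n,\mathrm{Berge}\text{-}F) - O(1)$. The point is to iterate: find a Berge-$F$ in $H$, set aside a matching edge disjoint from it, and recurse $k-1$ times. The obstruction is that this naive deletion could drop $e(H)$ below the Turán threshold of $H'$ after subtracting the hyperedges incident to the deleted vertices, which could be as many as $(\text{\# vertices})\cdot\Delta(H)$ — unbounded if $H$ has high-degree vertices.

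To control this, I would first split off the high-degree vertices. Fix a large constant $d=d(F,k,r)$ and let $V_0=\{v: d_H(v)>d\}$. If $|V_0|\geq k-1$, pick $k-1$ vertices of $V_0$; since their degrees exceed $d$, they can greedily serve as centers of $k-1$ hyperedge-disjoint single hyperedges (a Berge-$M_{k-1}$), and moreover these can be chosen disjoint from any bounded vertex set, in particular from the skeleton of a Berge-$F$ found in $H$ minus those $k-1$ hyperedges (whose edge count is still above $\mathrm{ex}_r(n,\mathrm{Berge}\text{-}F)-O(1)$, still forcing a Berge-$F$ since the extremal number is $\Omega(n)$ in all three regimes — this is where each hypothesis is used). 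If $|V_0|\leq k-2$, every vertex has degree $\leq d$, so deleting any bounded set of vertices removes only $O(1)$ hyperedges; then I can repeatedly extract a Berge-$F$, remove its $|V(F)|$ skeleton vertices and their $\leq |V(F)|d$ hyperedges, and still have more than $\mathrm{ex}_r(n,\mathrm{Berge}\text{-}F)$ hyperedges left to locate a matching edge — but actually the cleaner route is: find one Berge-$F$, delete its $\ell:=|V(F)|$ skeleton vertices; the remaining hypergraph has $>\mathrm{ex}_r(n,\mathrm{Berge}\text{-}F) - \ell d$ edges, which for large $n$ exceeds $\mathrm{ex}_{r}(n,\mathrm{Berge}\text{-}M_{k-1})$ (a constant, namely $k-2$, by Theorem~\ref{lem4.1} when $r\geq 2k-3$; and in the relevant range $r$ is large relative to $k$), hence contains a Berge-$M_{k-1}$ disjoint from the Berge-$F$.

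The three cases differ only in verifying that $\mathrm{ex}_r(n,\mathrm{Berge}\text{-}F)$ is large enough (growing with $n$, or at least exceeding the relevant small Turán number after the $O(1)$ loss) and that the arithmetic conditions on $r$ permit the reductions: in (i), $r\geq k+|V(F)|$ ensures that after removing $k-1$ hyperedges for the matching we still have room and $F$ not a matching forces $\mathrm{ex}_r$ to be at least linear (a non-matching $F$ contains $P_3$, so $\mathrm{ex}_r(n,\mathrm{Berge}\text{-}F)\geq\mathrm{ex}_r(n,\mathrm{Berge}\text{-}P_3)=\Omega(n)$); in (ii), $F$ containing a cycle gives $\mathrm{ex}_r(n,\mathrm{Berge}\text{-}F)=\Omega(n)$ (it contains a Berge cycle, and a cycle-free dense construction exists), and $r>k$ is exactly the threshold making the auxiliary matching extraction from the link or from bounded-degree hypergraphs work; in (iii), the $w$ vertices of degree $>1$ and the bound $r>k+w-1$ let us, after deleting those $w$ skeleton vertices, still find $k-1$ disjoint hyperedges. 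In each case the key point is that $\mathrm{ex}_r(n,\mathrm{Berge}\text{-}F)$ dominates the $O(1)$ correction and the matching's Turán number $k-2$, so the reduction closes. The main obstacle I anticipate is making the ``find a Berge-$F$ in a slightly smaller hypergraph'' step rigorous when $\mathrm{ex}_r(n,\mathrm{Berge}\text{-}F)$ is only a constant — but that cannot happen here, since each hypothesis guarantees $F$ is neither edgeless nor a matching in the regime considered, forcing $\mathrm{ex}_r(n,\mathrm{Berge}\text{-}F)$ to grow with $n$; handling the degenerate matching case is precisely what parts (i)–(iii) are designed to exclude, with the genuine matching case covered separately by Corollary~\ref{thmnew11} and Theorem~\ref{lem4.1}.
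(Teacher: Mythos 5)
Your lower bound and overall plan (find a Berge-$F$, then a hyperedge- and skeleton-disjoint Berge-$M_{k-1}$) match the paper's, but the execution has genuine gaps. The degree dichotomy does not work as written. If $1\le |V_0|\le k-2$ it is false that ``every vertex has degree $\le d$'': up to $k-2$ vertices still have unbounded degree, the skeleton $U$ of the Berge-$F$ you find may be forced to contain them, and then deleting $U$ can destroy all but $O(1)$ hyperedges, so the count ``$>\mathrm{ex}_r(n,\mathrm{Berge}\text{-}F)-\ell d$'' collapses. If $|V_0|\ge k-1$, the chosen high-degree centers may lie inside $U$ (you only have $k-1$ of them, not $k-1+|V(F)|$), and there is also an ordering problem: after removing the $k-1$ matching hyperedges you only know $e(H)-(k-1)>\mathrm{ex}_r(n,\mathrm{Berge}\text{-}F)-(k-1)$, which does not force a Berge-$F$ in the remainder. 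The configuration your sketch never confronts is $H=\{A\cup\{x\}: x\notin A\}$ for a fixed $(r-1)$-set $A$: every skeleton edge of every Berge subgraph must meet $A$, every vertex of $F$ of degree greater than $1$ must be embedded into $A$, and a Berge-$M_{k-1}$ whose skeleton avoids $U$ needs $k-1$ further vertices of $A$, i.e.\ $r-1\ge w+(k-1)$ --- precisely the hypothesis of \textbf{(iii)}. This is where the real content of \textbf{(iii)} lies, and the paper handles it by a structurally different argument (take a maximum Berge matching $BM$, show every remaining hyperedge has at least $r-1$ vertices in $V(M)$, pigeonhole a popular $(r-1)$-set $A$, and embed $F$ with its degree-$>1$ vertices inside $A$); nothing in your sketch produces this. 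A further warning sign: if your argument worked as stated it would give the conclusion for every non-matching $F$ and every $r>k$, strictly subsuming \textbf{(i)} and \textbf{(iii)}.

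For \textbf{(i)} and \textbf{(ii)} the paper avoids all degree considerations via a device you are missing: delete only the $e(F)$ hyperedges of the Berge-$F$ and look in the remainder for a Berge-$M_{k-1+|V(F)|}$. Because its skeleton consists of pairwise disjoint pairs, at most $|V(F)|$ of them can meet $U$, so $k-1$ of them automatically avoid $U$ and are usable. Hence either the forbidden configuration appears, or $e(H)\le \mathrm{ex}_r(n,\mathrm{Berge}\text{-}M_{k-1+|V(F)|})+e(F)$, which is $O(1)$ when $r\ge k+|V(F)|$ (giving \textbf{(i)}, since $F\supseteq P_2$ forces $\mathrm{ex}_r(n,\mathrm{Berge}\text{-}F)=\Omega(n)$) and is $O(n)$ when $k<r<k+|V(F)|$, contradicting the superlinear lower bound $\mathrm{ex}_r(n,\mathrm{Berge}\text{-}C_t)=\omega(n)$ of Ellis and Linial when $F$ contains a cycle (giving \textbf{(ii)}). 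Your claim that a cyclic $F$ merely gives $\Omega(n)$ is not enough here; superlinearity is what beats the linear matching bound in that range of $r$.
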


\begin{proof}[\bf Proof]
    We prove statements \textbf{(i)} and \textbf{(ii)} together. Let us assume indirectly that there is a Berge-$F\cup M_{k-1}$-free $r$-graph $H$ with more than ${\rm{ex}}_r(n,{\rm Berge}{\text -}F)$ hyperedges. Then $H$ contains a Berge-$F$ denoted by $BF$. Let $U$ denote the set of vertices in the skeleton of $BF$. Let us delete the hyperedges of $BF$ from $H$ to obtain a subhypergraph $H'$. 
    
    If $H'$ contains a Berge-$M_{k-1+|V(F)|}$, then $H'$ contains a Berge-$M_{k-1}$ whose skeleton does not intersect $U$. This Berge matching together with $BF$ forms a Berge-$F\cup M_{k-1}$ in $H$, which is a contradiction. Therefore, $H$ has at most ${\rm{ex}}_r(n,{\rm Berge}{\text -}M_{k-1+|V(F)|})+e(F)$ hyperedges. If $r\ge k+|V(F)|$, then this is $O(1)$ by Theorem \ref{lem4.1}. Since $P_2\subseteq F$, we have ${\rm{ex}}_r(n,{\rm Berge}{\text -}F)\geq{\rm{ex}}_r(n,{\rm Berge}{\text -}P_2)=\Omega(n)$, a contradiction completing the proof of \textbf{(i)}. 
    


    If $k<r<k+|V(F)|$, then ${\rm{ex}}_r(n,{\rm Berge}{\text -}M_{k-1+|V(F)|})+e(F)\le \binom{k+|V(F)|-1}{r-1}(n-k+1)+\binom{k-1}{r}+e(F)$.
    This is smaller than ${\rm{ex}}_r(n,{\rm Berge}{\text -}F)$ if $F$ contains a cycle. Indeed, a theorem of Ellis and Linial \cite{elli} implies that ${\rm{ex}}_r(n,{\rm Berge}{\text -}C_t)=\omega(n)$ for every $t$. This completes the proof of \textbf{(ii)}. 



    
    It is left to prove \textbf{(iii)}. Let $BM$ denote a largest Berge matching in $H$ and $M$ denote its skeleton. Then $e(BM)<k-1+e(F)+|V(F)|$, and thus $|V(M)|\le 2(k-2+e(F)+|V(F)|)$. Otherwise, 
    assume that $e(BM)\geq k-1+e(F)+|V(F)|$. Observe that the skeleton of $BF$ shares at most $|V(F)|$ vertices with $V(M)$, and at most $e(F)$ of its hyperedges are contained in $E(BM)$. Since $e(BM)\geq k-1+e(F)+|V(F)|$, there is a subgraph Berge-$M_{k-1}$ of $BM$ that is hyperedge-disjoint from $BF$ and whose skeleton is disjoint from the skeleton of $BF$. This Berge-$M_{k-1}$ together with $BF$ forms a Berge-$F\cup M_{k-1}$ in $H$, a contradiction. 

    Now let us remove the hyperedges of $BM$ from $H$, and denote this resulting hypergraph by $H''$. We claim that each hyperedge of $H''$ has at least $r-1$ vertices in $V(M)$.  Otherwise, there exists a larger Berge matching in $H$, contradicting the maximality of $BM$. Therefore, at least one of the $(r-1)$-sets $A$ in $V(M)$ is contained in at least \[\frac{{\rm{ex}}_r(n,{\rm Berge}{\text -}F)-e(BM)-\binom{2(k-2+e(F)+|V(F)|)}{r-1}}{\binom{2(k-2+e(F)+|V(F)|)}{r-1}}=\Omega(n)\] hyperedges of $H''$. Let $H'''$ denote subhypergraph of $H''$ consisting of the hyperedges that contain $A$ and are not contained in $V(M)$. Let $B$ denote the set of vertices that form a hyperedge of $H'''$ with $A$. 
    
    Now we embed $F$ into $H'''$ the following way. We embed the vertices of $F$ with degree greater than 1 into $A$ and the rest of the vertices into $B$. For each edge of the skeleton incident to a vertex of degree 1, one endpoint is embedded into $A$, and the other into $v\in B$. Then the corresponding hyperedge is $A\cup \{v\}$. The rest of the edges are inside $A$, we use arbitrary unused hyperedges of $H'''$ for them. We can choose a distinct one each time, since there are $\Omega(n)$ such hyperedges. Therefore, we embedded a Berge-$F$ such a way that it contains $w$ vertices of $V(M)$. If there are at least $k-1$ edges of $M$ avoiding the vertices of $F$, then the corresponding hyperedges of $BM$ together with the newly embedded Berge-$F$ form the forbidden configuration, a contradiction that completes the proof. If there are less than $k-1$ edges of $M$ avoiding the vertices of $F$, then $M$ has less than $k-1+w$ edges, thus $H$ is Berge-$M_{k-1+w}$-free, hence has $O(1)$ hyperedges by Theorem \ref{lem4.1}, a contradiction. This completes the proof. 
\end{proof}

\section{\normalsize Linear forests} 

Let us start with the proof of Proposition \ref{prop4.1} that we restate here for convenience. 

\begin{prop*}
Let $k\geq2$ be an integer, $\ell_1\geq\ell_2\geq\cdots\geq\ell_k\geq1$ be odd integers and $3\leq r\leq \frac{\sum_{i=1}^{k}(\ell_i+1)}{2}-7$. Then for sufficiently large $n$, 
\begin{eqnarray*}
{\rm{ex}}^{\rm con}_r(n,{\rm Berge}{\text -}P_{\ell_1}\cup\cdots\cup P_{\ell_k})&=&{\rm{ex}}^{\rm con}_r(n,{\rm Berge}{\text -}P_{\sum_{i=1}^{k}(\ell_i+1)-1}) \\
&=&\binom{\frac{\sum_{i=1}^{k}(\ell_i+1)}{2}-1}{r-1}\Big(n-\frac{\sum_{i=1}^{k}(\ell_i+1)}{2}+1\Big)+\binom{\frac{\sum_{i=1}^{k}(\ell_i+1)}{2}-1}{r}.
\end{eqnarray*}
\end{prop*}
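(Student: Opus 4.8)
The plan is to write $L:=\sum_{i=1}^{k}(\ell_i+1)$ and observe that, since each $\ell_i$ is odd, each $\ell_i+1$ is even, so $L$ is even and $L/2\in\mathbb{Z}$. The second equality in the statement is then immediate from Theorem~\ref{lem4.3} applied with $\ell:=L-1$: this $\ell$ is odd, and the hypothesis $3\le r\le L/2-7$ is exactly the pair of conditions $L-1\ge 2r+13$ and $2r+13\ge 18$, so Theorem~\ref{lem4.3} applies for all sufficiently large $n$; substituting $\lfloor(L-2)/2\rfloor=L/2-1$ and noting that the parity indicator vanishes because $L-1$ is odd recovers exactly the displayed expression. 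Hence it remains to prove the first equality, which I will do by establishing the two matching inequalities.

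For the upper bound, the point is that \emph{every} Berge-$P_{L-1}$ already contains a Berge-$(P_{\ell_1}\cup\cdots\cup P_{\ell_k})$. Its skeleton $P_{L-1}$ has $L$ vertices; partition the vertex sequence into consecutive blocks of sizes $\ell_1+1,\dots,\ell_k+1$ so that the $j$-th block spans a subpath with exactly $\ell_j$ edges, and discard the $k-1$ skeleton edges joining consecutive blocks (consistently, $\sum_{i=1}^k\ell_i+(k-1)=L-1$). Restricting the defining bijection to the retained skeleton edges gives a Berge-$(P_{\ell_1}\cup\cdots\cup P_{\ell_k})$ inside the original Berge-$P_{L-1}$. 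Consequently, any connected $r$-graph that is Berge-$(P_{\ell_1}\cup\cdots\cup P_{\ell_k})$-free is also Berge-$P_{L-1}$-free, so ${\rm{ex}}^{\rm con}_r(n,{\rm Berge}{\text -}P_{\ell_1}\cup\cdots\cup P_{\ell_k})\le {\rm{ex}}^{\rm con}_r(n,{\rm Berge}{\text -}P_{L-1})$.

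For the matching lower bound I will use the standard extremal configuration behind Theorem~\ref{lem4.3}. Set $s:=L/2-1$, fix a vertex set $A$ with $|A|=s$, let $B$ be the remaining $n-s$ vertices, and let $H^*$ be the $r$-graph whose hyperedges are precisely the $r$-sets meeting $A$ in at least $r-1$ vertices, i.e.\ all $\binom{s}{r}$ $r$-subsets of $A$ together with all $\binom{s}{r-1}(n-s)$ $r$-sets having exactly one vertex in $B$. Since $r\le s-6$ both binomial coefficients are positive, no vertex is isolated, and for large $n$ the hypergraph $H^*$ is connected because any two of its hyperedges can be linked through the clique on $A$. A direct count gives $e(H^*)=\binom{s}{r-1}(n-s)+\binom{s}{r}$, which is exactly the displayed formula (hence, by the second equality, equal to ${\rm{ex}}^{\rm con}_r(n,{\rm Berge}{\text -}P_{L-1})$). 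It remains to verify that $H^*$ is Berge-$(P_{\ell_1}\cup\cdots\cup P_{\ell_k})$-free. In any Berge path of $H^*$, two consecutive skeleton vertices cannot both lie in $B$, since every hyperedge of $H^*$ contains at most one vertex of $B$; hence a Berge-$P_{\ell_j}$, whose $\ell_j+1$ skeleton vertices contain no two consecutive vertices of $B$, uses at least $\lceil\ell_j/2\rceil=(\ell_j+1)/2$ skeleton vertices from $A$ --- this is where the oddness of $\ell_j$ enters. Summing over the $k$ vertex-disjoint Berge paths of a hypothetical Berge-$(P_{\ell_1}\cup\cdots\cup P_{\ell_k})$ would force at least $\sum_{i=1}^{k}(\ell_i+1)/2=L/2>s=|A|$ vertices inside $A$, a contradiction. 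Therefore ${\rm{ex}}^{\rm con}_r(n,{\rm Berge}{\text -}P_{\ell_1}\cup\cdots\cup P_{\ell_k})\ge e(H^*)={\rm{ex}}^{\rm con}_r(n,{\rm Berge}{\text -}P_{L-1})$, which together with the previous paragraph finishes the proof.

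The only genuinely delicate step is the freeness of $H^*$, and in particular the role of parity: a Berge path of even length $2a$ would be forced to use only $a$ vertices of $A$, so the counting above would lose precisely the surplus it needs, which is exactly why the proposition is stated for odd path lengths. Everything else is bookkeeping together with the black-box use of Theorem~\ref{lem4.3}, so I do not expect a serious technical obstacle beyond making the arithmetic of the floor functions and binomial coefficients line up.
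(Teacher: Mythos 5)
Your proof is correct and follows essentially the same route as the paper's: the upper bound via the observation that $P_{\ell_1}\cup\cdots\cup P_{\ell_k}$ is a subgraph of $P_{\sum(\ell_i+1)-1}$ combined with Theorem~\ref{lem4.3}, and the lower bound via the same extremal configuration (a set $A$ of $\sum(\ell_i+1)/2-1$ vertices with all $r$-sets meeting it in at least $r-1$ vertices), with the identical parity-based counting showing each Berge-$P_{\ell_i}$ must place at least $(\ell_i+1)/2$ skeleton vertices in $A$. Your write-up is in fact slightly more detailed than the paper's on the block decomposition of the long path and on why the construction is connected.
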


\begin{proof}[\bf Proof] 
Obviously, $P_{\ell_1}\cup\cdots\cup P_{\ell_k}$ is a subgraph of $P_{\sum_{i=1}^{k}(\ell_i+1)-1}$. Hence ${\rm{ex}}^{\rm con}_r(n,{\rm Berge}{\text -}P_{\ell_1}\cup\cdots\cup P_{\ell_k})\leq {\rm{ex}}^{\rm con}_r(n,{\rm Berge}{\text -}P_{\sum_{i=1}^{k}(\ell_i+1)-1})$. Since $\sum_{i=1}^{k}(\ell_i+1)-1$ is odd, by Theorem \ref{lem4.3}, we obtain the upper bound. 
For the lower bound, we consider the following $r$-graph $\tilde{H}$. Let $\tilde{A}$ be a set of $\sum_{i=1}^{k}(\ell_i+1)/2-1$ vertices and $\tilde{B}$ be the set of the remaining $n-\sum_{i=1}^{k}(\ell_i+1)/2+1$ vertices. For each vertex $u$ in $\tilde{B}$, we form a complete $r$-graph $K_{\sum_{i=1}^{k}(\ell_i+1)/2}^{(r)}$ on the vertex set $\tilde{A}\cup\{u\}$. 
By the definition of $\tilde{H}$, the skeleton of a Berge-$P_{\ell_i}$ in $\tilde{H}$ contains at least $\frac{\ell_i+1}{2}$ vertices of $\tilde{A}$. 
This implies that $\tilde{H}$ is Berge-$P_{\ell_1}\cup\cdots\cup P_{\ell_k}$-free as $|\tilde{A}|< \sum_{i=1}^{k}(\ell_i+1)/2$. Furthermore, $\tilde{H}$ is connected and 
\begin{eqnarray*}
e(\tilde{H})= \binom{\frac{\sum_{i=1}^{k}(\ell_i+1)}{2}-1}{r-1}\Big(n-\frac{\sum_{i=1}^{k}(\ell_i+1)}{2}+1\Big)+\binom{\frac{\sum_{i=1}^{k}(\ell_i+1)}{2}-1}{r}.
\end{eqnarray*}
This completes the proof. 
\end{proof}

Next, we proceed with the proof of Theorem \ref{thmnew14} that we restate here for convenience. 

\begin{thm*}
Let $r\geq3$ and $n$ be sufficiently large. 
\begin{kst}
\item[\textbf{(i)}] If $\ell$ is an odd integer and $\ell\geq 2r+11$, then 
\begin{eqnarray*}
{\rm{ex}}_r(n,{\rm Berge}{\text -}P_\ell\cup P_{1})=\max\bigg\{{\rm{ex}}_r(n,{\rm Berge}{\text -}P_\ell), \binom{\frac{\ell+1}{2}}{r-1}\Big(n-\frac{\ell+1}{2}\Big)+\binom{\frac{\ell+1}{2}}{r}\bigg\}.
\end{eqnarray*}
\item[\textbf{(ii)}] If $\ell_1,\ell_2$ are odd integers and $\ell_1\geq\ell_2\geq r+6$, then 
\begin{eqnarray*}
{\rm{ex}}_r(n,{\rm Berge}{\text -}P_{\ell_1}\cup P_{\ell_2})=\max\bigg\{{\rm{ex}}_r(n,{\rm Berge}{\text -}P_{\ell_1}), \binom{\frac{\ell_1+\ell_2}{2}}{r-1}\Big(n-\frac{\ell_1+\ell_2}{2}\Big)+\binom{\frac{\ell_1+\ell_2}{2}}{r}\bigg\}.
\end{eqnarray*}
\end{kst}
\end{thm*}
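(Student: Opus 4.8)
The plan is to prove both parts with a single argument, since $P_1$ plays in part (i) the role that the shorter path $P_{\ell_2}$ plays in part (ii), and the two numerical hypotheses $\ell\ge 2r+11$, respectively $\ell_1\ge\ell_2\ge r+6$, are exactly what is needed for Proposition \ref{prop4.1} to apply to $P_{\ell}\cup P_1$, respectively $P_{\ell_1}\cup P_{\ell_2}$ (note $\frac{\ell+1}{2}=\frac{\ell_1+\ell_2}{2}$ when $\ell_2=1$, $\ell_1=\ell$, so the second term of the maximum has the same shape in both parts). For the lower bound I would exhibit two $n$-vertex Berge-$P_{\ell_1}\cup P_{\ell_2}$-free $r$-graphs. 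First, any extremal Berge-$P_{\ell_1}$-free $r$-graph is Berge-$P_{\ell_1}\cup P_{\ell_2}$-free, because a Berge-$P_{\ell_1}\cup P_{\ell_2}$ contains a Berge-$P_{\ell_1}$ as a subhypergraph; this realizes the first term ${\rm{ex}}_r(n,{\rm Berge}{\text -}P_{\ell_1})$. Second, the ``book'' construction $\tilde H$ from the proof of Proposition \ref{prop4.1}, with apex set of size $\frac{\ell_1+\ell_2}{2}$ (an integer since $\ell_1,\ell_2$ are odd), is Berge-$P_{\ell_1}\cup P_{\ell_2}$-free and has exactly $\binom{(\ell_1+\ell_2)/2}{r-1}\big(n-\frac{\ell_1+\ell_2}{2}\big)+\binom{(\ell_1+\ell_2)/2}{r}$ hyperedges, realizing the second term. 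This gives the claimed lower bound.

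For the upper bound, let $H$ be a Berge-$P_{\ell_1}\cup P_{\ell_2}$-free $r$-graph on $n$ vertices with $n$ large. If $H$ has no Berge-$P_{\ell_1}$, then $e(H)\le{\rm{ex}}_r(n,{\rm Berge}{\text -}P_{\ell_1})$ and we are done; so assume $H$ contains a Berge-$P_{\ell_1}$. Its skeleton vertices and hyperedges all lie in a single connected component $C$ of $H$, since any hyperedge (having $r\ge 2$ vertices) is itself a Berge-$P_1$ and so lies in one component. The key point is that every other component $C'$ is Berge-$P_{\ell_2}$-free: the skeleton and the hyperedges of a Berge-$P_{\ell_2}$ inside $C'$ are disjoint from those of the fixed Berge-$P_{\ell_1}$ inside $C$ (hyperedges never cross components), so together they would form a Berge-$P_{\ell_1}\cup P_{\ell_2}$ in $H$. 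In part (i), ``Berge-$P_1$-free'' just means ``edgeless'', hence $e(H)=e(H[C])$; in part (ii), the hyperedges of $H$ outside $C$ span at most $n-|C|$ vertices and form a Berge-$P_{\ell_2}$-free hypergraph, so their number is at most ${\rm{ex}}_r(n-|C|,{\rm Berge}{\text -}P_{\ell_2})\le\frac{1}{r}\binom{\ell_2-1}{r-1}(n-|C|)$ by Theorem \ref{lem2.2}\,\textbf{(i)}.

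It remains to bound $e(H[C])$, where $H[C]$ is connected and Berge-$P_{\ell_1}\cup P_{\ell_2}$-free, so $e(H[C])\le{\rm{ex}}^{\rm con}_r(|C|,{\rm Berge}{\text -}(P_{\ell_1}\cup P_{\ell_2}))$. If $|C|$ exceeds the (constant, depending only on $\ell_1,\ell_2,r$) threshold of Proposition \ref{prop4.1}, that proposition gives $e(H[C])\le\binom{(\ell_1+\ell_2)/2}{r-1}\big(|C|-\frac{\ell_1+\ell_2}{2}\big)+\binom{(\ell_1+\ell_2)/2}{r}$, which is the second term $B$ of the theorem with $n$ replaced by $|C|$; adding the at most $\frac{1}{r}\binom{\ell_2-1}{r-1}(n-|C|)$ hyperedges outside $C$ and using $\binom{(\ell_1+\ell_2)/2}{r-1}\ge\binom{\ell_2}{r-1}>\frac{1}{r}\binom{\ell_2-1}{r-1}$ (valid as $\ell_1\ge\ell_2$ and $r\ge 2$) yields $e(H)\le B$. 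If instead $|C|$ is below that threshold, then $e(H[C])=O(1)$, and since the coefficient of $n$ in $B$ strictly exceeds $\frac{1}{r}\binom{\ell_2-1}{r-1}$, we obtain $e(H)<\max\{{\rm{ex}}_r(n,{\rm Berge}{\text -}P_{\ell_1}),B\}$ for $n$ large. Combining the cases with the lower bound proves both parts.

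I expect the only delicate points to be the density comparison $\binom{(\ell_1+\ell_2)/2}{r-1}>\frac{1}{r}\binom{\ell_2-1}{r-1}$ — which is precisely what forces the extremal configuration to pour all vertices into a single ``book'' component rather than detach a Berge-$P_{\ell_2}$-free piece — and the bounded-size case for the component carrying the Berge-$P_{\ell_1}$; both are routine. The substantive content is entirely absorbed into Proposition \ref{prop4.1} (and hence into Theorem \ref{lem4.3}), so I anticipate no genuine obstacle here beyond keeping this bookkeeping honest.
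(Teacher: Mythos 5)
Your proposal is correct and follows essentially the same route as the paper: exhibit the two lower-bound constructions, then split $H$ into the component containing a Berge-$P_{\ell_1}$ (bounded via Proposition \ref{prop4.1}) and the Berge-$P_{\ell_2}$-free remainder (bounded via Theorem \ref{lem2.2}\,\textbf{(i)}), finishing with the density comparison $\binom{(\ell_1+\ell_2)/2}{r-1}>\frac{1}{\ell_2}\binom{\ell_2}{r}$. The only cosmetic difference is that you treat (i) and (ii) uniformly, while the paper writes out the two cases separately.
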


\begin{proof}[\bf Proof] 
\textbf{(i).} For the lower bound, we consider the $r$-graph $\tilde{H}$ in the proof of Proposition \ref{prop4.1}, where we set $\ell_1=\ell$ and $\ell_2=1$, and denote the $r$-graph by $\tilde{H}_{\ell,1}$. Then $\tilde{H}_{\ell,1}$ is Berge-$P_{\ell}\cup P_{1}$-free. Meanwhile, we have ${\rm{ex}}_r(n,{\rm Berge}{\text -}P_{\ell})\leq {\rm{ex}}_r(n,{\rm Berge}{\text -}P_\ell\cup P_{1})$ as $P_{\ell}$ is a subgraph of $P_{\ell}\cup P_{1}$. Thus, ${\rm{ex}}_r(n,{\rm Berge}{\text -}P_\ell\cup P_{1})\geq \max\big\{{\rm{ex}}_r(n,{\rm Berge}{\text -}P_\ell), \binom{\frac{\ell+1}{2}}{r-1}\big(n-\frac{\ell+1}{2}\big)+\binom{\frac{\ell+1}{2}}{r}\big\}$.

We now continue with the upper bound. Suppose to the contrary that $H$ is a Berge-$P_{\ell}\cup P_{1}$-free $r$-graph on $n$ vertices with
\begin{eqnarray}
e(H)> \max\bigg\{{\rm{ex}}_r(n,{\rm Berge}{\text -}P_\ell), \binom{\frac{\ell+1}{2}}{r-1}\Big(n-\frac{\ell+1}{2}\Big)+\binom{\frac{\ell+1}{2}}{r}\bigg\}.
\end{eqnarray}
We claim that $H$ is disconnected. Otherwise, assume that $H$ is connected. By Proposition \ref{prop4.1}, we have
\begin{eqnarray*}
e(H)\leq {\rm{ex}}^{\rm con}_r(n,{\rm Berge}{\text -}P_{\ell}\cup P_{1})= \binom{\frac{\ell+1}{2}}{r-1}\Big(n-\frac{\ell+1}{2}\Big)+\binom{\frac{\ell+1}{2}}{r},
\end{eqnarray*}
which is a contradiction to (4.1). 
Since $e(H)>{\rm{ex}}_r(n,{\rm Berge}{\text -}P_\ell)$, $H$ contains a Berge-$P_\ell$. 

Let $H_1$ be the connected component that contains a Berge-$P_\ell$, and $H_2=H[V(H)\backslash V(H_1)]$. Note that $H=H_1\cup H_2$ and $H_2$ may not be connected. From the fact that $H$ is Berge-$P_{\ell}\cup P_{1}$-free, it follows that $H_1$ is Berge-$P_{\ell}\cup P_{1}$-free and $H_2$ is Berge-$P_1$-free. Therefore, $e(H_1)\leq {\rm{ex}}^{\rm con}_r(|V(H_1)|,{\rm Berge}{\text -}P_\ell\cup P_{1})$ and $e(H_2)\leq {\rm{ex}}_r(n-|V(H_1)|,{\rm Berge}{\text -}P_{1})=0$. 

If $|V(H_1)|=O(1)$, then $e(H_1)\leq \binom{|V(H_1)|}{r}=O(1)$ and therefore $e(H)=e(H_1)+e(H_2)=O(1)$, which is a contradiction to (4.1). Then $|V(H_1)|$ is large enough as $n$ is sufficiently large. By Proposition \ref{prop4.1}, ${\rm{ex}}^{\rm con}_r(|V(H_1)|,{\rm Berge}{\text -}P_\ell\cup P_{1})\leq \binom{\frac{\ell+1}{2}}{r-1}\big(|V(H_1)|-\frac{\ell+1}{2}\big)+\binom{\frac{\ell+1}{2}}{r}$. Thus, 
\begin{eqnarray*}
e(H)=e(H_1)+e(H_2)&\leq& {\rm{ex}}^{\rm con}_r(|V(H_1)|,{\rm Berge}{\text -}P_\ell\cup P_{1}) \\
&=& \binom{\frac{\ell+1}{2}}{r-1}\Big(|V(H_1)|-\frac{\ell+1}{2}\Big)+\binom{\frac{\ell+1}{2}}{r} \\
&\leq& \binom{\frac{\ell+1}{2}}{r-1}\Big(n-\frac{\ell+1}{2}\Big)+\binom{\frac{\ell+1}{2}}{r},
\end{eqnarray*}
which is a contradiction to (4.1), completing the proof. 

\textbf{(ii).} For the lower bound, we also consider the $r$-graph $\tilde{H}$, where we set $k=2$ and denote the $r$-graph by $\tilde{H}_{\ell_1,\ell_2}$. Then $\tilde{H}_{\ell_1,\ell_2}$ is Berge-$P_{\ell_1}\cup P_{\ell_2}$-free. Meanwhile, we have ${\rm{ex}}_r(n,{\rm Berge}{\text -}P_{\ell_1})\leq {\rm{ex}}_r(n,{\rm Berge}{\text -}P_{\ell_1}\cup P_{\ell_2})$ as $P_{\ell_1}$ is a subgraph of $P_{\ell_1}\cup P_{\ell_2}$. Thus, ${\rm{ex}}_r(n,{\rm Berge}{\text -}P_{\ell_1}\cup P_{\ell_2})\geq \max\big\{{\rm{ex}}_r(n,{\rm Berge}{\text -}P_{\ell_1}), \binom{\frac{\ell_1+\ell_2}{2}}{r-1}\big(n-\frac{\ell_1+\ell_2}{2}\big)+\binom{\frac{\ell_1+\ell_2}{2}}{r}\big\}$.

We now continue with the upper bound. Suppose to the contrary that $H$ is a Berge-$P_{\ell_1}\cup P_{\ell_2}$-free $r$-graph on $n$ vertices with
\begin{eqnarray}
e(H)> \max\bigg\{{\rm{ex}}_r(n,{\rm Berge}{\text -}P_{\ell_1}), \binom{\frac{\ell_1+\ell_2}{2}}{r-1}\Big(n-\frac{\ell_1+\ell_2}{2}\Big)+\binom{\frac{\ell_1+\ell_2}{2}}{r}\bigg\}.
\end{eqnarray}
We claim that $H$ is disconnected. Otherwise, assume that $H$ is connected. By Proposition \ref{prop4.1}, we have
\begin{eqnarray*}
e(H)\leq {\rm{ex}}^{\rm con}_r(n,{\rm Berge}{\text -}P_{\ell_1}\cup P_{\ell_2})= \binom{\frac{\ell_1+\ell_2}{2}}{r-1}\Big(n-\frac{\ell_1+\ell_2}{2}\Big)+\binom{\frac{\ell_1+\ell_2}{2}}{r},
\end{eqnarray*}
which is a contradiction to (4.2).
Since $e(H)>{\rm{ex}}_r(n,{\rm Berge}{\text -}P_{\ell_1})$, $H$ contains a Berge-$P_{\ell_1}$.

Let $H_1$ be the connected component that contains a Berge-$P_{\ell_1}$, and $H_2=H[V(H)\backslash V(H_1)]$. Note that $H=H_1\cup H_2$ and $H_2$ may not be connected. Let $|V(H_1)|=n_1$. Then $|V(H_2)|=n-n_1$. From the fact that $H$ is Berge-$P_{\ell_1}\cup P_{\ell_2}$-free, it follows that $H_1$ is Berge-$P_{\ell_1}\cup P_{\ell_2}$-free and $H_2$ is Berge-$P_{\ell_2}$-free. Therefore, $e(H_1)\leq {\rm{ex}}^{\rm con}_r(n_1,{\rm Berge}{\text -}P_{\ell_1}\cup P_{\ell_2})$ and 
$$e(H_2)\leq {\rm{ex}}_r(n-n_1,{\rm Berge}{\text -}P_{\ell_2})\leq \binom{\ell_2}{r}\frac{n-n_1}{\ell_2}$$ 
by Theorem \ref{lem2.2}\,\textbf{(i)}. 

If $n_1=O(1)$, then $e(H_1)\leq \binom{n_1}{r}=O(1)$. Note that $\binom{\ell_2}{r}\frac{1}{\ell_2}<\binom{\frac{\ell_1+\ell_2}{2}}{r-1}$ as $\ell_1\geq\ell_2$. Then 
\begin{eqnarray*}
e(H)&=& e(H_1)+e(H_2) \\
&\leq& O(1)+\binom{\ell_2}{r}\frac{n-n_1}{\ell_2} \\
&<& \binom{\frac{\ell_1+\ell_2}{2}}{r-1}\Big(n-\frac{\ell_1+\ell_2}{2}\Big)+\binom{\frac{\ell_1+\ell_2}{2}}{r},
\end{eqnarray*}
which is a contradiction to (4.2). Then $n_1$ is large enough as $n$ is sufficiently large. By Proposition \ref{prop4.1}, ${\rm{ex}}^{\rm con}_r(n_1,{\rm Berge}{\text -}P_{\ell_1}\cup P_{\ell_2})\leq \binom{\frac{\ell_1+\ell_2}{2}}{r-1}\Big(n_1-\frac{\ell_1+\ell_2}{2}\Big)+\binom{\frac{\ell_1+\ell_2}{2}}{r}$. Thus,
\begin{eqnarray*}
e(H)=e(H_1)+e(H_2)&\leq& {\rm{ex}}^{\rm con}_r(n_1,{\rm Berge}{\text -}P_{\ell_1}\cup P_{\ell_2})+{\rm{ex}}_r(n-n_1,{\rm Berge}{\text -}P_{\ell_2}) \\
&\leq& \binom{\frac{\ell_1+\ell_2}{2}}{r-1}\Big(n_1-\frac{\ell_1+\ell_2}{2}\Big)+\binom{\frac{\ell_1+\ell_2}{2}}{r}+\binom{\ell_2}{r}\frac{n-n_1}{\ell_2} \\
&<& \binom{\frac{\ell_1+\ell_2}{2}}{r-1}\Big(n_1-\frac{\ell_1+\ell_2}{2}\Big)+\binom{\frac{\ell_1+\ell_2}{2}}{r}+\binom{\frac{\ell_1+\ell_2}{2}}{r-1}(n-n_1) \\
&=& \binom{\frac{\ell_1+\ell_2}{2}}{r-1}\Big(n-\frac{\ell_1+\ell_2}{2}\Big)+\binom{\frac{\ell_1+\ell_2}{2}}{r},
\end{eqnarray*}
which is a contradiction to (4.2). This completes the proof. 
\end{proof}

\bigskip
\textbf{Funding}: 
The research of Zhou is supported by the National Natural Science Foundation of China (Nos. 11871040, 12271337, 12371347) and the China Scholarship Council (No. 202406890088).

The research of Gerbner is supported by the National Research, Development and Innovation Office - NKFIH under the grant KKP-133819.

The research of Yuan is supported by the National Natural Science Foundation of China (Nos. 11871040, 12271337, 12371347).

\bigskip
\noindent{\bf{Declaration of interest}}

The authors declare no known conflicts of interest.

\bigskip
\noindent{\bf{Acknowledgements}}

We would like to thank Hilal Hama Karim for several helpful discussions on this problem.

\end{document}